\newcommand{\vip}{\vskip0.15cm}
\newcommand{\indiq}{{\rm 1 \hskip-4pt 1}}
\newcommand{\E}{{\mathbb{E}}}
\newcommand{\nn}{\mathbb{N}}
\newcommand{\zz}{\mathbb{Z}}
\newcommand{\tT}{\mathbb{T}}
\newcommand{\rr}{{\mathbb{R}}}
\newcommand{\cc}{{\mathbb{C}}}
\newcommand{\cE}{{\mathcal{E}}}
\newcommand{\cT}{{\mathcal{T}}}
\newcommand{\cH}{{\mathcal{H}}}
\newcommand{\cP}{{\mathcal{P}}}
\newcommand{\cN}{{\mathcal{N}}}
\newcommand{\cJ}{{\mathcal{J}}}
\newcommand{\cA}{{\mathcal{A}}}
\newcommand{\cS}{{\mathcal{S}}}
\newcommand{\cB}{{\mathcal{B}}}
\newcommand{\bQ}{{\bf {Q}}}
\newcommand{\bR}{{\bf {R}}}
\newcommand{\cI}{{\mathcal{I}}}
\newcommand{\hG}{{\widehat G}}
\newcommand{\expo}{{\mathcal{E}xp}}
\newcommand{\tcl}{\left[\!\left<}
\newcommand{\tcr}{\right>\!\right]}
\newcommand{\bex}{{\mathcal B}_{\rm ex}}
\newcommand{\psiex}{{\psi}_{\rm ex}}
\newcommand{\Ai}{{\rm Ai}}
\newcommand{\be}{{\bf e}}
\newcommand{\bn}{{\bf n}}
\newcommand{\bnel}{{{\bf n}_\infty}}
\newtheorem{theo}{\indent Theorem}
\newtheorem{prop}[theo]{\indent Proposition}
\newtheorem{rem}[theo]{\indent Remark}
\newtheorem{lem}[theo]{\indent Lemma}
\newtheorem{defin}[theo]{\indent Definition}
\newenvironment{preuve}{\vip \noindent {\it Proof}}{\hfill$\square$ \vip}
\begin{document}

\title[A mean field forest-fire model]
{Mean-field forest-fire models and pruning of random trees}
\author{Xavier Bressaud}
\author{Nicolas Fournier}
\thanks{Acknowledgments: The second author was supported during this work by
the grant from the Agence Nationale de la Recherche with reference 
ANR-08-BLAN-0220-01.}
\address{Xavier Bressaud: Universit\'e Paul Sabatier, Institut de 
Math\'ematiques de Toulouse, F-31062 Toulouse Cedex, France}
\email{bressaud@math.univ-toulouse.fr}
\address{Nicolas Fournier:
Laboratoire d'Analyse et de Math\'ematiques Appliqu\'ees, CNRS UMR 8050,
Universit\'e Paris-Est, 
61 avenue du G\'en\'eral de Gaulle, 94010 Cr\'eteil Cedex, France}
\email{nicolas.fournier@univ-paris12.fr}

\begin{abstract}
We consider a family of discrete coagulation-fragmentation equations 
closely related to the one-dimensional forest-fire model of statistical 
mechanics: each pair of particles with masses $i,j \in \nn$
merge together at rate $2$ to produce a single particle with mass $i+j$,
and each particle with mass $i$ breaks into $i$ particles with
mass $1$ at rate $(i-1)/n$. The (large) parameter $n$ controls the rate of 
ignition and there is also an acceleration factor
(depending on the total number of particles) in front
of the coagulation term.
We prove that for each $n\in \nn$, such a model has a unique equilibrium 
state and study in details the asymptotics of this equilibrium 
as $n\to \infty$: (I) the distribution of the mass of a typical
particle goes to the law of the
number of leaves of a critical binary Galton-Watson tree,
(II) the distribution of the mass of a typical size-biased particle 
converges, after rescaling,
to a limit profile, which we write explicitly in terms of the
zeroes of the Airy function and its derivative.
We also indicate how to simulate perfectly a typical particle and a size-biased
typical particle, which allows us to give some probabilistic interpretations 
of the above results in terms 
of pruned Galton-Watson trees and pruned continuum random trees.
\end{abstract}

\keywords{Self-organized criticality, 
Smoluchowski's equation, Coagulation, Fragmentation, Equilibrium
Asymptotic Behavior, Forest-fire model, Galton-Watson trees, Continuum random trees,
Pruning, Scaling limit}

\subjclass[2000]{82B05}

\maketitle

\tableofcontents

\section{Introduction}

\subsection{The forest-fire model}
The forest-fire model of statistical mechanics has been introduced
by Henley \cite{h} and Drossel-Schwabl \cite{ds} in the context
of self-organized criticality. From the rigorous point of view, 
the one-dimensional forest-fire model has been studied by
Van den Berg-Jarai \cite{vdbj}, Brouwer-Pennanen \cite{bp}
and by the authors \cite{bf,bfnew}. We refer to the introduction of \cite{bfnew}
for many details. 

\vip

Let us now describe the one-dimensional forest-fire model:
on each site of $\zz$, seeds fall at rate $1$ and matches fall at rate $1/n$,
for some (large) $n\in \nn$. Each time a seed falls on a vacant site, this site
immediately becomes occupied (by a tree). Each time a match falls on an
occupied site, it immediately burns the corresponding occupied
connected component. 

\vip

From the point of view of self-organized criticality, 
it is interesting to study what happens when $n$ increases to infinity. Then
matches are very rare, but tree clusters are huge before they burn.
In \cite{bf}, we have established that after normalization,
the forest-fire process converges, as $n\to \infty$, to a scaling limit.

\subsection{A related mean-field model}
We now introduce a mean-field model formally related to the forest-fire
process, see \cite[Section 6]{bfold} for a similar study when $n=1$. 
Assume that each edge of $\zz$ has mass $1$. Say that
two adjacent edges $(i-1,i)$ and $(i,i+1)$ are glued if the site $i$
is occupied. Then adjacent clusters coalesce at rate $1$ and each
cluster with mass $k$ (containing $k$ edges and $k-1$ sites) breaks
up into $k$ clusters with mass $1$ at rate $(k-1)/n$. 
Denote by $c_k^n(t)$ the concentration (number per unit of length) of clusters
with mass $k\geq 1$ at time $t\geq 0$. Then the total mass should satisfy
$\sum_{k\geq 1} k c^n_k(t)=1$ for all $t\geq 0$. Neglecting
correlations (which is far from being justified), the family
$(c^n_k(t))_{t\geq 0, k\geq 1}$ would
satisfy the following system of differential
equations called $(CF_n)$:
\begin{align}
\frac{d}{dt}c^n_1(t)=& -2 c^n_1(t) + \frac{1}{n}\sum_{k \geq 2} k(k-1)c^n_k(t),
\label{eqt1}\\
\frac{d}{dt}c^n_k(t)=& -(2+(k-1)/n)c^n_k(t) + \frac{1}{\sum_{l\geq 1} c^n_l(t)}
\sum_{i=1}^{k-1} c^n_i(t)c^n_{k-i}(t) \quad (k\geq 2).\label{eqt2}
\end{align}
The first term on the RHS of (\ref{eqt1}) expresses a cluster
with mass $1$ disappears at rate $2$ (because it glues with each of 
its two neighbors at rate $1$); the second term on the RHS of (\ref{eqt1})
says that $k$ clusters with mass $1$ appear each time
a cluster with mass $k$ takes fire, which occurs at rate $(k-1)/n$.
The first term on the RHS of (\ref{eqt2}) explains that a cluster
with mass $k$ disappears at rate $2+(k-1)/n$ (because it glues with each of
its two neighbors at rate $1$ and takes fire at rate $(k-1)/n$).
Finally, the second term  
on the RHS of (\ref{eqt2}) says that when a seed falls between two clusters
with masses $i$ and $k-i$, a cluster with mass $k$ appears.
The number per unit of length of
pairs of neighbor clusters with masses $i$ and $k-i$ is nothing
but $c^n_i(t)c^n_{k-i}(t)/ \sum_{l\geq 1} c^n_l(t)$. Here we implicitly
use an independence
argument which is not valid for the true forest-fire model.

\vip

The system (\ref{eqt1})-(\ref{eqt2}) can almost be seen as a special 
coagulation-fragmentation equation, see e.g. Aizenman-Bak \cite{ab} and Carr
\cite{c}, where particles with masses $i$ and $j$ coalesce at constant rate
$K(i,j)=2$ and where particles with mass $i\geq 2$ break up into
$i$ particles with mass $1$ at rate $F(i;1,\dots ,1)=(i-1)/n$.
However there is the acceleration factor 
$1/ \sum_{l\geq 1} c^n_l(t)$ in front of the coagulation term.

\subsection{On the link between the two models}
The link between $(CF_n)$ and the forest-fire model is only formal.
Observe however that if there are only seeds {\bf or} only matches,
then the link is rigorous. 

\vip

(i) Assume that all sites of $\zz$ are initially vacant and that seeds fall
on each site of $\zz$ at rate $1$, so that each site is occupied at time $t$
with probability $1-e^{-t}$. Call $p_k(t)$ the probability that the edge $(0,1)$
belongs to a cluster with mass $k$ at time $t$. A simple computation
shows that $p_k(t)=k(1-e^{-t})^{k-1}e^{-2t}$. 
By space stationarity, the concentration $c_k(t)$ of
particles with mass $k$ at time $t$ satisfies 
$c_k(t)=p_k(t)/k=(1-e^{-t})^{k-1}e^{-2t}$. Then 
one easily checks that the family $(c_k(t))_{k\geq 1,t\geq 0}$ satisfies
(\ref{eqt1})-(\ref{eqt2}) with no fragmentation term (i.e. $n=\infty$).

\vip

(ii) The fragmentation term is linear and generates {\it a priori} 
no correlation. Assume that we have initially some given concentrations 
$(c_k(0))_{k\geq 1}$, so that the edge $(0,1)$ belongs to a cluster
with mass $k$ with probability $kc_k(0)$ and that only matches fall,
at rate $1/n$ on each site. Then the probability  
$p_k(t)$ that the edge $(0,1)$ belongs to a cluster with mass $k$ at time $t$
is simply given by $p_k(t)=kc_k(0)e^{-(k-1)t/n}$ if $k\geq 2$ (here
$e^{-(k-1)t/n}$ is the probability that no match has fallen on our cluster
before time $t$) and $p_1(t)=c_1(0)+\sum_{k\geq 2} kc_k(0)[1-e^{-(k-1)t/n}]$.
Writing $c_k(t)=p_k(t)/k$ as previously, we see that the family
$(c_k(t))_{k\geq 1,t\geq 0}$ satisfies the fragmentation equations
$\frac{d}{dt}c_1(t)=n^{-1}\sum_{k\geq 2}k(k-1)c_k(t)$ and, for $k \geq 2$,
$\frac{d}{dt}c_k(t)=- n^{-1}(k-1)c_k(t)$.

\vip

When one takes into account both coalescence and fragmentation,
the rigorous link between the two models breaks down: in the 
true forest-fire model, fragmentation (fires)
produces small clusters which are close to each other, so that a small
cluster has more chance to have small clusters as coalescence partners.
However, we have seen numerically in \cite[Section 6]{bfold}
that at equilibrium, in the special case where $n=1$, 
the two models are very close to each other.

\subsection{Motivation}
Initially, the motivation of the present study was to decide if 
(\ref{eqt1})-(\ref{eqt2}) is a good approximation of the true
forest-fire model, at least from a qualitative point of view: 
do we have the same scales and same features 
(as $n\to \infty$)? We will see that this is not really the case.
However, we believe that  (\ref{eqt1})-(\ref{eqt2}) is a very interesting model,
at least theoretically, since

$\bullet$ many explicit computations are possible for
(\ref{eqt1})-(\ref{eqt2}), 

$\bullet$ we observe self-organized criticality, 

$\bullet$ we show that two interesting points of view 
(size-biased and non size-biased particles' mass distribution)
lead to quite different conclusions,

$\bullet$ we have some probabilistic interpretations of our results.

\subsection{Summary of the main results of the paper}
We will show in this paper the existence of a unique 
equilibrium state $(c_k^n)_{k\geq 1}$ with total mass $\sum_{k\geq 1} kc^n_k=1$ 
for $(CF_n)$, for each $n\geq 1$ fixed and we study the asymptotics of rare fires $n\to \infty$.

\vip

(I) We show that the particles' mass distribution $(p^n_k)_{k\geq 1}$, defined by
$p^n_k=c_k^n/\sum_{l\geq 1}c^n_l$, goes
weakly to the law $(p_k)_{k\geq 1}$ of the number of
leaves of a critical binary Galton-Watson tree, which is explicit and satisfies
$p_k \sim (2 \sqrt \pi k^{3/2})^{-1}$ as $k\to \infty$;

\vip

(II) We prove that the size-biased particles' mass distribution $(kc_k^n)_{k\geq 1}$
goes weakly to, after normalization
of the masses by $n^{-2/3}$, to a continuous limit profile
$(xc(x))_{x\in (0,\infty)}$ with total mass $1$, for which we have two explicit expressions:
the first one involves the zeroes
of the Airy function and its derivative, while the Laplace transform 
of the Brownian excursion's area appears in the second one. 
Furthermore, we check that 
$c(x)\sim \kappa_1 x^{-3/2}$ as $x\to 0$ and $c(x) \sim \kappa_2 e^{-\kappa_3 x}$
as $x \to \infty$, for some positive explicit constants 
$\kappa_1,\kappa_2,\kappa_3$.

\vip

(III) We explain how to simulate
perfectly, for $n\geq 1$ fixed, a random variable $X_n$ with law
$(p_k^n)_{k\geq 1}$ and a random variable $Y_n$ with law $(kc_k^n)_{k\geq 1}$,
using some pruned Galton-Watson trees.

\vip

(IV) We deduce from (III) an easy probabilistic interpretation of point (I): roughly, $(p_k^n)_{k\geq 1}$
can be viewed as the law of the number of leaves of a pruned critical Galton-Watson tree, and we only
have to check that almost no pruning occurs when $n$ is very large.

\vip

(V) We also derive a probabilistic interpretation of (II), which seems quite natural, since the
Brownian excursion's area appears in the limit. This part is quite complicated. The main idea is that
that the pruned Galton-Watson tree of which the number of leaves is $(kc_k^n)_{k\geq 1}$-distributed
has a scaling limit, which is nothing but a pruned continuum random tree (precisely, a pruned
self-similar CRT, see Aldous \cite{a1}). We do not prove rigorously that this scaling limit arises.
However, we show that in some precise sense, the number of leaves of the pruned CRT under consideration
is indeed $xc(x)dx$-distributed, where $c$ is the limit profile found in point (II).

\vip

(VI) Finally, this pruned CRT  (heuristically) leads to a noticeable diffusion process
(rigorously) enjoying the strange property that its drift coefficient equals the Laplace
exponent of its inverse local time at $0$.

\vip

Let us discuss briefly points (I) and (II).
The particles' mass distribution is, roughly, the law of the mass of a particle
chosen uniformly at random. 
The size-biased particles' mass distribution is, roughly, the law of the mass 
of the particle containing a given {\it atom}, this atom being chosen
uniformly at random (think that a particle with mass $k$ is composed of
$k$ atoms). 

\vip

Point (I) says that if one picks a particle at random, then its mass $X_n$
is finite (uniformly in $n$) and goes in law, as $n\to \infty$,
to a {\it critical} probability distribution (with infinite expectation). 
Point (II) says that if one 
picks an atom at random, then the mass $Y_n$ of the particle including this 
atom is of order $n^{2/3}$ and $n^{-2/3}Y_n$ goes in law to an explicit probability
distribution with moments of all orders.

\subsection{Comments} Let us now comment on these results.

\vip

(a) Self-organized criticality, see Bak-Tang-Wiesenfield \cite{btw}
and Henley \cite{h}, is a popular concept in physics.
The main idea is the following: in statistical mechanics, there are
often some {\it critical} parameters, for which special features occur.
Consider e.g. the case of percolation in $\zz^2$, see Grimmett \cite{g}: 
for $p\in (0,1)$ fixed, open each edge independently with probability
$p$. There is a critical parameter $p_c\in (0,1)$ 
such that for 
$p\leq p_c$, there is a.s. no infinite open path, while for
$p> p_c$, there is a.s. one infinite open path. If $p<p_c$, all the 
open paths are small (the cluster-size distribution has an exponential decay). 
If $p>p_c$, there is only one infinite open path, which is huge, and all
the finite open paths are small. But if $p=p_c$, there are some large
finite open paths (with a heavy tail distribution).
And it seems that such phenomena, reminiscent from criticality, 
sometimes occur in nature, where nobody is here to finely tune the parameters.
Hence one looks for models in which criticality occurs naturally.
We refer to the introduction of \cite{bfnew} for many details.

\vip

(b) Thus we observe here self-organized criticality for the particles' mass 
distribution, since the limit distribution $(p_k)_{k\geq 1}$ has a heavy tail
and is indeed related to a {\it critical} binary Galton-Watson process.
Observe that point (I) is quite strange at first glance. Indeed, when
$n\to \infty$, the time-dependent equations (\ref{eqt1})-(\ref{eqt2}) 
tend to some
coagulation equations without fragmentation, for which there is no equilibrium
(because all the particles' masses tend to infinity). However, the equilibrium
state for (\ref{eqt1})-(\ref{eqt2}) tends to some non-trivial equilibrium state
as $n\to \infty$.
This is quite surprising: the limit (as $n\to \infty$) of the equilibrium
is not the equilibrium of the limit.
Observe that this is indeed {\it self-organized}
criticality: the {\it critical} 
Galton-Watson tree appears automatically in the limit $n\to \infty$.
A possible heuristic argument is that in some sense, in the limit $n\to \infty$,
only infinite clusters are destroyed. We thus let clusters grow as
much as they want, but we do not let them become infinite. Thus the system
reaches {\it by itself} a critical state.

\vip

(c) For the size-biased particles' mass distribution, 
we observe no self-organized criticality, since the limit profile has an 
exponential decay. The two points of view (size-biased or not) 
seem interesting and our results show that they really enjoy different 
features.
Let us insist on the fact that the particles' mass 
distribution converges without rescaling, while the 
size-biased particles' mass distribution converges after rescaling.
This is due to the
fact that there are many small particles and very few very large particles,
so that when one picks a particle at random, we get a rather small particle,
while when one picks an atom at random, it belongs to a rather large particle.
Mathematically, since the limiting particles' mass distribution has no 
expectation, it is no more possible to write properly the corresponding
size-biased distribution: a normalization is necessary.

\vip

(d) Let us mention the paper
of R\'ath-T\'oth \cite{rt}, who consider a forest-fire model 
on the complete graph. In a suitable regime, they obtain
the same critical distribution $(p_k)_{k\geq 1}$ as we do
(see \cite[Formula (14)]{rt}). But in their case,
this is the limit of the {\it size-biased} particles' mass distribution.
Their model rather corresponds to the case of a multiplicative coagulation
kernel (clusters of masses $k$ and $l$ coalesce at rate $kl$). 
Hence we exhibit, in some sense, a link between the Smoluchowski equation 
with constant kernel and the Smoluchowski equation with multiplicative kernel.
See Remark \ref{multconst} for a precise statement.
In the same spirit, recall the link found by Deaconu-Tanr\'e \cite{dt}
between multiplicative and additive coalescence.

\vip

(e) For the true forest-fire process,
we proved in \cite{bf} the presence of {\it macroscopic}
clusters, with masses of order $n / \log n$ and of {\it microscopic} clusters,
with masses of order $n^z$, for all values of $z \in [0,1)$. These microscopic
clusters interact with macroscopic clusters in that they limit the impact of
fires. Hence, while the scales are really different here, we observe something
similar: in some sense, the particles' mass distribution describes 
{\it microscopic} clusters, while the size-biased 
the particles' mass distribution describes {\it macroscopic} clusters.

\vip

(f) The trend to equilibrium for 
coagulation-fragmentation equations has been much studied,
see e.g. Aizenman-Bak \cite{ab} and Carr \cite{c}, under a 
reversibility condition, often called {\it detailed balance condition}.
Such a reversibility assumption cannot hold here, 
because particles merge by pairs and
break into an arbitrary large number of smaller clusters.
Without reversibility, much less is known: 
a special case has been studied by Dubowski-Stewart \cite{dst}
and a general result has been obtained in \cite{fm}
under a smallness condition saying that fragmentation is much stronger than 
coalescence. None of the
above results may apply to the present model (at least for $n$ slightly large).
For the specific model under study 
we are able to prove the uniqueness of the equilibrium, but not
the trend to equilibrium.

\vip

(g) Finally, let us mention that our probabilistic interpretation of (II) uses extensively
Aldous's theory
on continuum random trees \cite{a1,a2,a3}, which has been developed 
by Duquesne-Le Gall \cite{dlg,dlg2}.
Some pruning procedures of such CRTs have been introduced by Abraham-Delmas-Voisin \cite{adv}.
Roughly, the prune the CRT by choosing cut-points uniformly in the tree 
(uniformly according to the Lebesgue measure
on the tree).
Our pruning procedure is very different: we choose leaves (according to the measure on leaves), these leaves
send some cut-points on the branches (joining them to the root). Then we prune according to these cut-points,
in a suitable order. The cut-points sent by leaves belonging to subtrees previously pruned are deactivated.

\subsection{Outline of the paper}
The next section is devoted to the precise statement of points (I) and (II).
Section \ref{pr} contains the proofs of points (I) and (II), which are purely analytic. 
Using discrete pruned random trees, we indicate how to simulate
perfectly a typical particle and a size-biased typical particle in Section \ref{proba}.
Finally, we study in Section \ref{proba2} a possible scaling limit of the 
discrete tree representing a typical size-biased particle.

\section{Precise statements of the results}

For a $[0,\infty)$-valued sequence $u=(u_k)_{k\geq 1}$ 
and for $i\geq 0$, we put
$$
m_i(u):=\sum_{k\geq 1} k^i u_k.
$$

\begin{defin} Let $n\in \nn$. A sequence $c^n=(c_k^n)_{k\geq 1}$ of 
nonnegative real numbers is said solve $(E_n)$ if it is an equilibrium state
for $(CF_n)$ with total mass $1$:
\begin{align}\label{eq0}
m_1(c^n)=&1, \\
\label{eq2}
\left(2+ \frac{k-1}{n}\right)
c_k^n=& \frac{1}{m_0(c^n)}\sum_{i=1}^{k-1} c_i^n c_{k-i}^n \quad (k\geq 2).
\end{align}
Observe that it then automatically holds that
\begin{align}\label{eq1}
2c_1^n = \frac 1 n \sum_{k \geq 2} k(k-1)c^n_k =\frac{ m_2(c^n) - 1}{n}.
\end{align}
\end{defin}

To check this last claim, multiply (\ref{eq2})
by $k$, sum for $k\geq 2$, use (\ref{eq0})
and that $\sum_{k\geq 2} k\sum_{i=1}^{k-1}c^n_ic^n_{k-i}=
\sum_{k\geq 2} \sum_{i=1}^{k-1}(i+k-i)c^n_ic^n_{k-i}= 2\sum_{k,l\geq 1} k c^n_k c^n_l
=2m_0(c^n)$. One finds $2(1-c^n_1)+[m_2(c^n)-1]/n=2$, from which (\ref{eq1}) readily
follows.

\vip

To state our main results, we need some background on the Airy function $\Ai$.
We refer to Janson \cite[p 94]{j} and the references therein.
Recall that for $x\in \rr$, 
$\Ai(x)=\pi^{-1}\int_0^\infty \cos(t^3/3+xt)dt$ is the unique solution,
up to normalization, to the differential equation $\Ai''(x)=x\Ai(x)$ 
that is bounded for $x\geq 0$. It extends to an entire function.
All the 
zeroes of the Airy function and its derivative lie on negative real axis.
Let us denote by
$a_1'<0$ the largest negative zero of $\Ai'$ and by
$\dots < a_3<a_2<a_1<0$ the ordered zeroes of $\Ai$.
We know that $|a_1|\simeq 2.338$ and $|a_1'|\simeq 1.019$,
see Finch \cite{f2}. We also know that $|a_j|\sim (3\pi j/2)^{2/3}$
as $j\to \infty$, see Janson \cite[p 94]{j}.

\begin{theo}\label{mr1}
(i) For each $n\in \nn$, $(E_n)$ has a unique solution  
$c^n=(c_k^n)_{k\geq 1}$.

(ii) As $n\to \infty$, there hold
$$
m_0(c^n)\sim \frac 1 {|a_1'|n^{1/3}},
\quad m_2(c^n)\sim \frac{n^{2/3}}{|a_1'|}.
$$
\end{theo}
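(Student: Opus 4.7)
The plan is to recast $(E_n)$ as a Riccati ODE for the generating function $G(z):=\sum_{k\geq 1}c_k^n z^k$, linearize it to a Bessel-type equation, and extract the large-$n$ asymptotics from the classical Airy transition-region expansion of the Bessel function. Multiplying (\ref{eq2}) by $z^k$, summing on $k\geq 2$, and using $\sum_{k\geq 2}(k-1)c_k^n z^k=zG'(z)-G(z)$ turns the convolution into $G^2/m$ (with $m:=m_0(c^n)$) and yields
\begin{equation*}
\tfrac{z}{n}G'(z)+\bigl(2-\tfrac{1}{n}\bigr)G(z)-\tfrac{1}{m}G(z)^2=2c_1^n z.
\end{equation*}
Evaluating at $z=1$ with $G(1)=m$ and $G'(1)=1$ produces $2c_1^n=(1-1/n)m+1/n$, which combined with (\ref{eq1}) is the algebraic identity $m_2(c^n)=2+(n-1)m$; everything therefore reduces to determining $m$.

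The Riccati substitution $G(z)=-(mz/n)u'(z)/u(z)$ linearizes the ODE to $zu''(z)+2nu'(z)+\alpha u(z)=0$ with $\alpha:=n(n-1)+n/m$. The point $z=0$ is a regular singular point with Frobenius indices $\{0,1-2n\}$, and the recursion on the analytic-at-origin branch identifies it as a Bessel function, $u(z)\propto(\alpha z)^{-(n-1/2)}J_{2n-1}(2\sqrt{\alpha z})$. The remaining boundary condition $u'(1)/u(1)=-n$ becomes, after one use of the recurrence $J_{\nu+1}(x)=(\nu/x)J_\nu(x)-J_\nu'(x)$, the transcendental equation
\begin{equation*}
\frac{d}{dx}\bigl[xJ_{2n-1}(x)\bigr]\bigg|_{x=2\sqrt{\alpha}}=0.
\end{equation*}
For part (i) I take $\alpha^{*}=(x^*)^2/4$, where $x^*\in(0,j_{2n-1,1})$ is the first critical point of $xJ_{2n-1}(x)$ and $j_{2n-1,1}$ is the first positive zero of $J_{2n-1}$. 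At $\alpha^{*}$ the function $u$ stays strictly positive on $[0,1]$, $G$ is analytic there, and the recursion (\ref{eq2}) forces $c_k^n>0$ for all $k$ by induction from $c_1^n>0$. For any larger critical point of $xJ_{2n-1}$, a zero of $u$ enters $(0,1)$, making $G$ develop a pole and producing Taylor coefficients of mixed signs; this rules out the alternatives and yields uniqueness of $\alpha$, hence of $m$ and of $c^n$.

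For part (ii) I invoke the Airy transition-region expansion $J_\nu(\nu+t\nu^{1/3})=(2/\nu)^{1/3}\Ai(-2^{1/3}t)+o(\nu^{-1/3})$ as $\nu\to\infty$, uniform on compacts in $t$, together with its analogue for $J_\nu'$. Since in $(xJ_\nu(x))'=J_\nu(x)+xJ_\nu'(x)$ the second summand dominates by a factor $\nu^{2/3}$, the boundary condition reduces at leading order to $\Ai'(-2^{1/3}t)=0$, and the first critical point of $xJ_{2n-1}$ corresponds to the largest (least negative) zero $a_1'$ of $\Ai'$. Unwinding with $\nu=2n-1$ gives $2\sqrt{\alpha^*}=2n+|a_1'|n^{1/3}+o(n^{1/3})$, hence $\alpha^*=n^2+|a_1'|n^{4/3}+o(n^{4/3})$; combining with $\alpha^*=n(n-1)+n/m$ yields $m\sim 1/(|a_1'|n^{1/3})$, and the identity $m_2=2+(n-1)m$ then gives $m_2\sim n^{2/3}/|a_1'|$. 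The main obstacle will be making this Airy matching effective with quantitative uniform error bounds in the precise $\alpha$-window of interest, so that the subleading correction to the location of $x^*$ is controlled finely enough for the asymptotic $n/m\sim|a_1'|n^{4/3}$ to survive division by $n^{4/3}$; a related subtlety is rigorously selecting the $a_1'$ branch, coupling the positivity/monotonicity argument above with the Airy asymptotic uniformly in $n$.
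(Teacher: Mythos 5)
Your approach is, up to bookkeeping, the same as the paper's. Where you work with $G(z)=\sum c_k^n z^k$ and the free parameter $m=m_0(c^n)$, the paper works with the normalized series $f_n(q)=\sum \alpha_k^n q^k$ (with $\alpha_1^n=1$), related by $G(z)=m_0(c^n)f_n(q_n z)$. Your Riccati-to-Bessel reduction and boundary condition are identical to theirs: setting $\gamma_n=2\sqrt{\alpha^*}=2n\sqrt{2q_n}$, your condition $\frac{d}{dx}\bigl[xJ_{2n-1}(x)\bigr]\big|_{x=\gamma_n}=0$ is the paper's $\gamma_n J_{2n}(\gamma_n)=2nJ_{2n-1}(\gamma_n)$ after one application of the recurrence (\ref{bf1}). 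So the two proofs locate the same root.

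Two remarks on where your version differs and what the paper buys. First, on uniqueness: your phrase ``producing Taylor coefficients of mixed signs'' is a slip. For any critical point $\alpha$ of $xJ_{2n-1}$ and the corresponding $m>0$, $c_1^n>0$, the recursion (\ref{eq2}) produces a sequence that is entirely positive by induction. What actually fails for the wrong $\alpha$ is self-consistency: a zero of $u$ in $(0,1)$ makes $G$ blow up strictly before $z=1$, hence (by positivity and Pringsheim) $\sum_k c_k^n=\infty\neq m$, i.e., $m_0(c^n)\neq m$, so $(E_n)$ is not solved. The paper sidesteps this reasoning by working only with $f_n$, which is strictly increasing from $0$ to $\infty$ on $[0,r_n)$, so $f_n(q)=1$ has a unique root; this is cleaner than tracking pole locations of $G$. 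Second, on asymptotics: the uniform Airy transition-region estimate for $(xJ_\nu(x))'$ that you rightly flag as your main obstacle is precisely what the paper avoids having to rederive. The paper (a) brackets $\gamma_n\in(j_{2n-1}',j_{2n}')$ by an elementary sign/monotonicity argument on $[\log J_{2n}]'$ (this replaces your qualitative ``the $xJ_\nu'$ summand dominates by a factor $\nu^{2/3}$'' with a rigorous two-sided trap that also handles your branch-selection worry), and then (b) invokes the quantitative two-sided bound (\ref{bz}) on the first zero $j_k'$ of $J_k'$, $k+2^{-1/3}|a_1'|k^{1/3}<j_k'<k+2^{-1/3}|a_1'|k^{1/3}+Ck^{-1/3}$, citing Finch's compilation. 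That inequality is the packaged output of exactly the Olver/Airy analysis you propose to do from scratch. With it, $\gamma_n=2n+|a_1'|n^{1/3}+O(1)$ follows immediately, which is stronger than the $o(n^{1/3})$ you need for $n/m\sim|a_1'|n^{4/3}$. If you fill in your asymptotics by citing the same zero estimate rather than redoing the Airy matching, your argument closes cleanly.
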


In some sense, $m_0(c^n)$ stands for the total concentration 
and $m_2(c^n)$ stands for the mean mass of 
(size-biased) clusters. For any fixed
$l\geq 0$, we also have shown that $m_{l+1}(c^n)\sim M_{l} n^{2l/n}$ for
some positive constant $M_l$, see Lemma \ref{tlm} below.

\begin{theo}\label{mr2}
For each $n\geq 1$, consider the unique solution $(c^n_k)_{k\geq 1}$ to $(E_n)$
and the corresponding particles' mass probability distribution $(p^n_k)_{k\geq 1}$
defined by $p_k^n=c_k^n/m_0(c^n)$. There holds
$$
\lim_{n\to \infty} \sum_{k\geq 1} |p_k^n-p_k| = 0,
\quad where \quad p_k:= \frac 2 {4^k k}\begin{pmatrix} 2k-2 \\ k-1 
\end{pmatrix}.
$$
The sequence $(p_k)_{k\geq 1}$ is the unique nonnegative solution to
\begin{align}\label{limsimple}
\sum_{k\geq 1} p_k=1,\quad p_k=\frac 1 2 \sum_{i=1}^{k-1}p_i p_{k-i} \quad (k\geq 2).
\end{align}
There holds, as $k\to \infty$, 
\begin{align}\label{eqpk}
p_k \sim \frac 1 {2\sqrt \pi k^{3/2}}.
\end{align}
\end{theo}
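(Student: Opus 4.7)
The plan is to first identify the limit sequence $(p_k)_{k\geq 1}$ explicitly (which simultaneously gives uniqueness in \eqref{limsimple} and the tail asymptotic \eqref{eqpk}), and then pass to the limit in the recurrence for $(p_k^n)_{k\geq 1}$ using the moment estimates of Theorem \ref{mr1}(ii), with an $\ell^1$ upgrade by Scheff\'e's lemma.

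For the identification step, I introduce the generating function $P(z) := \sum_{k\geq 1} p_k z^k$. The recurrence in \eqref{limsimple} for $k\geq 2$ translates, via the Cauchy product, into $P(z)^2 = 2\bigl(P(z) - p_1 z\bigr)$, whose two branches are $P(z) = 1\pm\sqrt{1 - 2 p_1 z}$; the condition $P(0)=0$ forces the minus sign. Since $(p_k)$ is non-negative with $\sum_k p_k = 1$, Abel's theorem gives $\lim_{z\uparrow 1}P(z) = 1$, which in turn forces $p_1 = 1/2$ and hence $P(z) = 1 - \sqrt{1-z}$. This already proves uniqueness, and expanding via the standard Catalan-number identity $\tfrac{1}{2}\bigl(1 - \sqrt{1-4u}\bigr) = \sum_{k\geq 1}\tfrac{1}{k}\binom{2k-2}{k-1}u^k$ with $u=z/4$ recovers $p_k = \tfrac{2}{4^k k}\binom{2k-2}{k-1}$. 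Stirling gives $\binom{2k-2}{k-1}\sim 4^{k-1}/\sqrt{\pi(k-1)}$, from which \eqref{eqpk} is immediate.

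For the convergence step, I rewrite the equilibrium equation \eqref{eq2}: dividing both sides by $m_0(c^n)$ (the prefactor $m_0(c^n)^{-1}$ on the right combines with $c_i^n c_{k-i}^n = m_0(c^n)^2 p_i^n p_{k-i}^n$ to leave a single $m_0(c^n)$, which cancels the one on the left) yields, for $k\geq 2$,
\[
\Bigl(2 + \frac{k-1}{n}\Bigr) p_k^n = \sum_{i=1}^{k-1} p_i^n p_{k-i}^n,
\]
while \eqref{eq1} combined with $p_1^n = c_1^n/m_0(c^n)$ gives $2 p_1^n = (m_2(c^n)-1)/(n\,m_0(c^n))$. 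By Theorem \ref{mr1}(ii), both $m_2(c^n)$ and $n\,m_0(c^n)$ are equivalent to $n^{2/3}/|a_1'|$, so $p_1^n\to 1/2 = p_1$. A straightforward induction on $k$ in the displayed recurrence (each sum being finite) then gives $p_k^n\to p_k$ pointwise for every $k\geq 2$.

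Finally, to upgrade this pointwise convergence to the $\ell^1$ statement $\sum_k |p_k^n - p_k|\to 0$, I invoke Scheff\'e's lemma: both $(p_k^n)_{k\geq 1}$ and $(p_k)_{k\geq 1}$ are probability distributions on $\nn$ (by the very definition of $p_k^n$ and by the previous step), so pointwise convergence together with equality of total masses is enough. Modulo the input of Theorem \ref{mr1}(ii), the only mildly delicate point in the whole argument is the generating-function manipulation, where one must be careful with the branch choice at $0$ and with the use of Abel's theorem at $z=1$ to pin down $p_1$.
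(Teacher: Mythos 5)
Your proof is correct and follows essentially the same architecture as the paper's: identify the limit sequence via its generating function, prove $p_1^n\to 1/2$ from \eqref{eq1} and Theorem~\ref{mr1}(ii), propagate by induction through the recurrence obtained by dividing \eqref{eq2} by $m_0(c^n)$, and upgrade to $\ell^1$ via Scheff\'e. The only substantive variation is in the uniqueness argument for \eqref{limsimple}: you pin down $p_1$ by solving the functional equation $P(z)^2=2(P(z)-p_1z)$ for the generating function and invoking Abel's theorem at $z=1$, whereas the paper uses the more elementary direct computation $p_1=1-\sum_{k\geq 2}p_k=1-\tfrac12\bigl(\sum_{k\geq 1}p_k\bigr)^2=\tfrac12$; both are valid, and your route, while slightly heavier, has the advantage of simultaneously producing the closed form $P(z)=1-\sqrt{1-z}$.
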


Formally, divide (\ref{eq2}) by $m_0(c^n)$ and
make $n$ tend to infinity: one gets  $2p_k=\sum_{i=1}^{k-1}p_i p_{k-i}$
for all $k\geq 2$. What is much more difficult (and quite surprising) is to establish
that no mass is lost at the limit.
Observe that (\ref{limsimple}) rewrites $\sum_{i\geq 1}p_i p_k
=(1/2)\sum_{i=1}^{k-1}p_i p_{k-i}$ (for all $k\geq 2$), which
corresponds to an equilibrium for a coagulation equation with constant kernel.
This is quite strange, since coagulation is a monotonic process, for which
no equilibrium should exist. The point is that in some sense, {\it infinite}
particles are broken into particles with mass $1$, in such a way that
$\sum_{k\geq 1} p_k=1$. 
Finally, we mention that $(p_k)_{k\geq 1}$ is the law of the number of leaves
of a critical binary Galton-Watson tree, which will be interpreted in Section \ref{proba}.

\begin{theo}\label{mr3}
For each $n\geq 1$, consider the unique solution $(c^n_k)_{k\geq 1}$ to $(E_n)$
and the corresponding size-biased particles' mass probability distribution 
$(kc^n_k)_{k\geq 1}$.
For any $\phi \in C([0,\infty))$ with at most polynomial growth, there holds
\begin{align*}
\lim_{n\to \infty} \sum_{k\geq 1} \phi(n^{-2/3}k)kc_k^n = \int_0^\infty \phi(x)xc(x)dx,
\end{align*}
where the profile $c:(0,\infty)\mapsto (0,\infty)$ is defined, for $x>0$, by
\begin{align*}
c(x)= |a_1'|^{-1} \exp\left(|a_1'|x\right) \sum_{j=1}^\infty 
\exp\left(-|a_j|x\right).
\end{align*}
The profile $c$ is of class $C^\infty$ on $(0,\infty)$,
has total mass $\int_0^\infty xc(x)dx=1$ and 
\begin{align*}
c(x) \stackrel {x \to 0} \sim \frac{1}{2\sqrt{\pi} |a_1'| x^{3/2}}
\quad and \quad c(x) \stackrel {x \to \infty} \sim |a_1'|^{-1} 
\exp\left( (|a_1'|-|a_1|)x \right).
\end{align*}
For any
$\phi \in C^1([0,\infty))$ such that $\phi$ and $\phi'$ have at most
polynomial growth,
\begin{align}\label{eqlim}
2|a_1'|\int_0^\infty \int_0^\infty x[\phi(x+y)-\phi(x)]c(x)c(y)dydx
= \int_0^\infty x^2[\phi(x)-\phi(0)] c(x)dx.
\end{align}
Denote by $\bex$ is the integral of the normalized Brownian excursion, 
see Revuz-Yor \cite[Chapter XII]{ry}. For all $x>0$, 
\begin{align*}
c(x) =\frac{\exp\left(|a_1'|x\right)}
{2\sqrt{\pi} |a_1'| x^{3/2}}\E\left[e^{-\sqrt{2} x^{3/2}\bex } \right].
\end{align*}
Finally, for all $q \in (a_1-a_1',\infty)$ (recall that $a_1-a_1'<0$),
\begin{align}\label{lala}
\ell(q):=\int_0^\infty (1-e^{-qx})c(x)dx = \frac{-\Ai'(q+a_1')}{|a_1'|\Ai(q+a_1')}.
\end{align}
\end{theo}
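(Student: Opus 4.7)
The strategy is to pass to the limit in the Laplace transform of the rescaled size-biased distribution $\mu_n := \sum_k (kc_k^n)\delta_{k/n^{2/3}}$. Set $F_n(q) := \sum_{k\geq 1} kc_k^n e^{-qk/n^{2/3}}$, the Laplace transform of $\mu_n$ (with $F_n(0)=1$), and $\Lambda_n(q) := n^{2/3}\sum_{k\geq 1} c_k^n(1 - e^{-qk/n^{2/3}})$, so that $\Lambda_n'=F_n$ and $\Lambda_n(0)=0$. Multiplying (\ref{eq2}) by $z^k$ and summing produces a first-order ODE for the generating function of $c^n$; the substitution $z=e^{-q/n^{2/3}}$, together with the identity $2nc_1^n = m_2(c^n)-1$ from (\ref{eq1}) and the asymptotics of Theorem~\ref{mr1}(ii), yields after rearrangement
\begin{align*}
F_n(q) = \frac{\Lambda_n(q)^2}{n^{1/3}m_0(c^n)} - q\,\frac{m_2(c^n)-1}{n^{2/3}} + 1 + o(1),
\end{align*}
locally uniformly in $q\in [0,\infty)$, where $n^{1/3}m_0(c^n)\to 1/|a_1'|$ and $(m_2(c^n)-1)/n^{2/3}\to 1/|a_1'|$. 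The moment bounds $m_{j+1}(c^n)=O(n^{2j/3})$ of Lemma~\ref{tlm} yield $|F_n^{(j)}(q)|\leq n^{-2j/3}m_{j+1}(c^n)=O(1)$, so $(\Lambda_n)$ is relatively compact in $C^j_{\rm loc}([0,\infty))$ for each $j$, and any subsequential limit $\ell$ satisfies the Riccati equation $\ell'(q) = |a_1'|\ell(q)^2 - q/|a_1'| + 1$ with $\ell(0)=0$.

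The classical linearization $\ell = -w'/(|a_1'|w)$ reduces this Riccati equation to the shifted Airy equation $w''(q) = (q+a_1')w(q)$. Writing $w = A\Ai(q+a_1')+B\,{\rm Bi}(q+a_1')$, the condition $\ell(0)=0$ reads $w'(0)=0$, i.e., $B\,{\rm Bi}'(a_1')=0$ (using $\Ai'(a_1')=0$), and this forces $B=0$ since the Wronskian $\Ai\,{\rm Bi}'-\Ai'\,{\rm Bi}=1/\pi$ gives ${\rm Bi}'(a_1')=1/(\pi\Ai(a_1'))\neq 0$. This uniquely identifies $\ell(q)=-\Ai'(q+a_1')/(|a_1'|\Ai(q+a_1'))$, the formula (\ref{lala}), smooth on $[0,\infty)$ because $a_1<a_1'$ ensures $\Ai(q+a_1')>0$ there. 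Uniqueness of the Riccati solution promotes subsequential to full convergence, and the continuity theorem combined with the moment bounds of Lemma~\ref{tlm} yields the announced convergence of $\mu_n$ against $\phi$ of polynomial growth. To extract the explicit form of $c$, I use the Hadamard factorization of $\Ai$ (of order $3/2$, hence genus~$1$, with Weierstrass exponent polynomial of degree $\leq 1$, so $g''=0$): differentiating $\log\Ai$ twice gives $(\Ai'/\Ai)'(z)=-\sum_{j\geq 1}(z-a_j)^{-2}$, and combined with $R':=-\Ai''/\Ai+(\Ai'/\Ai)^2=-z+R^2$ where $R=-\Ai'/\Ai$, together with $\Ai''/\Ai=z$, this gives $R'(z)=\sum_{j\geq 1}(z-a_j)^{-2}$. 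Hence
\begin{align*}
\ell'(q) = \frac{1}{|a_1'|}\sum_{j\geq 1}\frac{1}{(q-q_j)^2},\qquad q_j:=a_j-a_1'<0,
\end{align*}
and termwise Laplace inversion using $1/(q-q_j)^2 = \int_0^\infty xe^{-(q-q_j)x}dx$ yields $xc(x)=(x/|a_1'|)\sum_j e^{q_j x}$, the series formula for $c$; $C^\infty$ smoothness on $(0,\infty)$ follows from the rapid decay of $e^{-|a_j|x}$, total mass $\int xc\,dx=\ell'(0)=R'(a_1')/|a_1'|=(-a_1'+R(a_1')^2)/|a_1'|=1$ (since $R(a_1')=0$), and the exponential asymptotic at $+\infty$ is the dominant $j=1$ term. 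For $x\to 0$, the theta-like sum $\sum_j e^{-|a_j|x}$ is approximated by $\int_0^\infty e^{-(3\pi s/2)^{2/3}x}ds = 1/(2\sqrt\pi x^{3/2})$ via $|a_j|\sim (3\pi j/2)^{2/3}$.

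The Brownian-excursion representation is a direct translation of the classical identity $\E[e^{-t\bex}] = t\sqrt{2\pi}\sum_j e^{-|a_j|(t/\sqrt 2)^{2/3}}$ recalled in Janson~\cite{j}, upon setting $t=\sqrt 2 x^{3/2}$. The weak equation (\ref{eqlim}) is most quickly verified via Laplace transforms: testing against $\phi(x)=e^{-qx}$ reduces it to $F'(q) = 2|a_1'|F(q)\ell(q) - 1/|a_1'|$ (using $\int x^2c(x)dx = -F'(0) = 1/|a_1'|$), which follows at once from differentiating the Riccati equation for $\ell$ since $F=\ell'$; density of the exponentials in the class of $C^1$ test functions of polynomial growth extends the identity. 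The main obstacle, in my view, is the first stage --- securing precise enough uniform control on the $o(1)$ error terms in the approximate Riccati ODE and on the higher moments $m_{j+1}(c^n)$ via Lemma~\ref{tlm} so as to rigorously identify the limit. Everything downstream is explicit computation with Airy functions and a classical Brownian excursion identity.
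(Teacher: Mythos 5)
Your proposal is essentially correct and reaches all of the paper's formulas, but it takes a genuinely different route through several of the intermediate steps, so it is worth spelling out the comparison.

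Both you and the paper pivot on the Riccati equation $\ell'(q)=1-q/|a_1'|+|a_1'|\ell(q)^2$, but you arrive at it by passing to the limit in a first-order ODE for the ordinary generating function $g_n(z)=\sum_k c_k^n z^k$ (obtained by multiplying (\ref{eq2}) by $z^k$, summing, and substituting $z=e^{-q/n^{2/3}}$), whereas the paper first establishes a weak form of the equilibrium equation for the limit measure $\mu$ (Lemma \ref{lequi}, itself a limit of the discrete weak form (\ref{wf2})) and then specializes to $\phi(x)=(1-e^{-qx})/x$ (Lemma \ref{laplace}). Your second departure is in the Laplace inversion: the paper matches $\ell$ against the Darling--Louchard formula (\ref{ex2}) to produce the density in terms of $\psi_{\rm ex}$, and then invokes the theta series (\ref{ex1}); you instead use the Hadamard factorization of $\Ai$ (order $3/2$, genus $1$) to write $\ell'(q)=|a_1'|^{-1}\sum_j(q-q_j)^{-2}$ and invert termwise. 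That is a clean and more self-contained analytic argument, and it has the nice side effect of showing directly that $\mu$ has no atom at $0$ (the paper carries along a candidate atom $\beta=\mu(\{0\})$ through Lemma \ref{laplace}, where it cancels, and only kills it in Lemma \ref{lastlem}).

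Two points are glossed over and would need filling in. First, your verification of (\ref{eqlim}) only checks $\phi=e^{-qx}$, and the appeal to ``density of exponentials'' is not routine here: the left-hand side of (\ref{eqlim}) involves the \emph{infinite} measure $c(y)dy$ and only the difference structure $\phi(x+y)-\phi(x)$ makes the double integral converge, so you cannot rewrite either side as $\int\phi\,d\sigma$ for a finite signed measure $\sigma$ and conclude from uniqueness of Laplace transforms. In the paper (\ref{eqlim}) is not derived from the Riccati at all: it is the specialization of Lemma \ref{lequi} to the case where $\mu$ has a density, and Lemma \ref{lequi} was proved for general test functions by passing to the limit in (\ref{wf2}). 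Proving Lemma \ref{lequi} is the cleanest way to close this gap in your approach as well. Second, your small-$x$ asymptotic for $c$ is obtained heuristically by replacing the theta sum by an integral using $|a_j|\sim(3\pi j/2)^{2/3}$; this would require a remainder estimate to be rigorous. Since you also record the representation $c(x)=\psi_{\rm ex}(\sqrt2 x^{3/2})e^{|a_1'|x}/(2\sqrt\pi|a_1'|x^{3/2})$, you can read the small-$x$ asymptotic directly from $\psi_{\rm ex}(0)=1$, as the paper does. The rest of your argument --- the $C^j_{\rm loc}$ compactness of $\Lambda_n$ from the moment bounds of Lemma \ref{tlm}, the identification via uniqueness of the Riccati solution, the linearization $\ell=-w'/(|a_1'|w)$ reducing to the shifted Airy equation, the Wronskian argument killing the ${\rm Bi}$ component, and the derivation $\int xc\,dx=\ell'(0)=R'(a_1')/|a_1'|=1$ --- is correct.
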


Since the mean mass of a typical (size-biased) particle is of order
$n^{2/3}$ by Theorem \ref{mr1}-(ii), it is natural to
rescale the particles' masses by a factor $n^{-2/3}$. Here we state
that under this scale, there is indeed a limit profile
and we give some information about this profile.
The link with the Brownian excursion's area will interpreted
in Section \ref{proba2}.

\section{Analytic proofs}\label{pr}

For each $n \in \nn$, we introduce the sequence
$(\alpha_k^n)_{k\geq 1}$, defined recursively by
\begin{align}\label{dfalpha}
\alpha_1^n=1,\quad \left(2+(k-1)/n\right)\alpha_{k}^n=\sum_{i=1}^{k-1} 
\alpha_i^n \alpha_{k-i}^n \quad (k\geq 2).
\end{align}
We also introduce its generating function $f_n$, defined for $q\geq 0$ by
\begin{align}\label{generaf}
f_n(q)=\sum_{k\geq 1} \alpha^n_k q^k.
\end{align}
Obviously, $f_n$ is increasing on $[0,\infty)$ and takes its values in
$[0,\infty)\cup \{\infty\}$.
The unique solution to $(E_n)$ can be expressed in terms
of this sequence.

\begin{lem}\label{equi}
Fix $n\in\nn$.
Assume that there exists a (necessarily unique) 
$q_n>0$ such that $f_n(q_n)=1$. Assume furthermore that $f_n'(q_n)<\infty$.
Then there is a unique solution to $(E_n)$
and it is given by
\begin{align}\label{ettac}
c^n_k = \alpha^n_k  q_n^{k-1}/f_n'(q_n) \quad (k\geq 1).
\end{align}
Furthermore, there holds $m_0(c^n)= 1/(q_n f_n'(q_n))$.
\end{lem}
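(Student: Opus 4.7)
The plan is to verify existence by direct substitution of the ansatz, and then establish uniqueness by observing that the recursion (\ref{dfalpha}) has exactly the same shape as (\ref{eq2}) up to the scalar $1/m_0(c^n)$; this suggests the scaling ansatz $c_k = A\lambda^k \alpha_k^n$, and the two constraints $m_0(c^n)=$ const and $m_1(c^n)=1$ will pin down the two free parameters.

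For existence, set $c_k^n := \alpha_k^n q_n^{k-1}/f_n'(q_n)$. Using $f_n(q_n)=1$ and the assumed finiteness of $f_n'(q_n)$, I compute
\[
m_0(c^n)=\frac{f_n(q_n)}{q_nf_n'(q_n)}=\frac{1}{q_nf_n'(q_n)},\qquad m_1(c^n)=\frac{q_nf_n'(q_n)}{q_nf_n'(q_n)}=1,
\]
which already gives (\ref{eq0}) and the final claim on $m_0(c^n)$. Substituting the formula into both sides of (\ref{eq2}), the factors of $f_n'(q_n)$ and $q_n$ cancel and the identity reduces exactly to (\ref{dfalpha}); thus $c^n$ solves $(E_n)$.

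For uniqueness, take any solution $c^n$ of $(E_n)$ and set $m:=m_0(c^n)$. The constraint $m_1(c^n)=1$ forces $m>0$, and the recursion (\ref{eq2}) propagates zeros forward, so $c_1^n>0$ as well (else all $c_k^n=0$). Define $\lambda:=c_1^n/m>0$ and compare $c_k^n$ with $\tilde c_k:=m\lambda^k\alpha_k^n$ by induction on $k$: the base case $\tilde c_1=c_1^n$ is clear, and if $\tilde c_j=c_j^n$ for all $j<k$, then plugging $\tilde c_j = m\lambda^j\alpha_j^n$ into the right-hand side of (\ref{eq2}) yields
\[
\frac{1}{m}\sum_{i=1}^{k-1}m\lambda^i\alpha_i^n\cdot m\lambda^{k-i}\alpha_{k-i}^n=m\lambda^k\sum_{i=1}^{k-1}\alpha_i^n\alpha_{k-i}^n=\bigl(2+(k-1)/n\bigr)\tilde c_k,
\]
by (\ref{dfalpha}); since (\ref{eq2}) determines $c_k^n$ uniquely from $c_1^n,\dots,c_{k-1}^n$ and $m$, this forces $c_k^n=\tilde c_k$.

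Finally, summing $c_k^n=m\lambda^k\alpha_k^n$ gives $m=mf_n(\lambda)$, i.e.\ $f_n(\lambda)=1$; since $f_n$ is strictly increasing (all $\alpha_k^n>0$) and $q_n$ is assumed to be the unique positive preimage of $1$, we get $\lambda=q_n$. Then $1=m_1(c^n)=m\lambda f_n'(\lambda)=c_1^nf_n'(q_n)$ yields $c_1^n=1/f_n'(q_n)$ and hence $m=c_1^n/q_n=1/(q_nf_n'(q_n))$, giving precisely (\ref{ettac}). The only delicate point is justifying termwise differentiation when extracting $f_n'(q_n)$ from $m_1(c^n)$, but this is immediate from the hypothesis $f_n'(q_n)<\infty$ together with the nonnegativity of all coefficients, so no serious obstacle arises.
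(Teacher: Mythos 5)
Your proof is correct and follows essentially the same route as the paper's: observe that the recursion in $(E_n)$ has the same shape as the one defining $(\alpha_k^n)$, so any solution must be of the form $c_k^n = m\lambda^k\alpha_k^n$ (the paper packages this as a small "Step 1" lemma about the recursion $y_1=x$, $(2+(k-1)/n)y_k = r^{-1}\sum y_iy_{k-i}$), then use the two constraints $m_0(c^n)=m$ and $m_1(c^n)=1$ to force $\lambda=q_n$ and $m=1/(q_nf_n'(q_n))$, and finally verify the ansatz directly. The only differences are cosmetic: your parametrization $(m,\lambda)$ versus the paper's $(c_1^n,\,c_1^n/m_0(c^n))$, and your explicit induction versus the paper's appeal to its Step 1; the substance is identical.
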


\begin{proof} We break the proof into 3 steps.
\vip

{\it Step 1.} A simple computation shows that for any fixed $r>0$, $x>0$,
the sequence defined recursively by
\begin{align*}
y_1=x, \quad \left(2+(k-1)/n\right) y_{k}=\frac{1}{r}\sum_{i=1}^{k-1} 
y_i y_{k-i} \quad (k\geq 2)
\end{align*}
is given by $y_k=\alpha^n_k x (x/r)^{k-1}$.
\vip

{\it Step 2.} Consider a solution $(c^n_k)_{k\geq 1}$ to $(E_n)$. 
Then 
due to (\ref{eq2}) and Step 1 (write $r=m_0(c^n)$ and $x=c^n_1$), 
$c^n_k=\alpha^n_k c_1^n (c_1^n/m_0(c^n))^{k-1}$.
We deduce that 
$$
1 = \frac 1 {m_0(c^n)}\sum_{k\geq 1} c^n_k = \sum_{k\geq 1}
\alpha^n_k (c_1^n/m_0(c^n))^{k}= f_n( c_1^n/m_0(c^n)).
$$
Consequently, $c_1^n/m_0(c^n)=q_n$, whence
$c^n_k=\alpha^n_k c_1^n q_n^{k-1}$. Next we know that $m_1(c^n)=1$, so that
$f_n'(q_n)=\sum_{k\geq 1} k \alpha^n_k q_n^{k-1}
=m_1(c^n)/c_1^n=1/c_1^n$. Consequently, $c_1^n =1/f_n'(q_n)$ and
thus $c^n_k=\alpha^n_k q_n^{k-1}/f_n'(q_n)$  as desired.

\vip

{\it Step 3.} Let us finally check that $c^n$ as defined by (\ref{ettac})
is indeed solution to $(E_n)$ and that 
it satisfies $m_0(c^n)=1/(q_nf_n'(q_n))$.
First, 
$$
m_0(c^n)=\frac{1}{f_n'(q_n)}\sum_{k\geq 1} \alpha^n_k q_n^{k-1}=
\frac{f_n(q_n)}{q_nf_n'(q_n)} = \frac{1}{q_nf_n'(q_n)}.
$$
Next, (\ref{eq0}) holds, since
$$
m_1(c^n)=\frac{1}{f_n'(q_n)}\sum_{k\geq 1} k \alpha^n_k q_n^{k-1}
=\frac{f_n'(q_n)}{f_n'(q_n)}=1.
$$
Rewriting $c^n_k=\alpha^n_k x (x/r)^{k-1}$ with
$x=1/f_n'(q_n)$ and $r=1/(q_nf_n'(q_n))$, Step 1 implies that for $k\geq 2$,
\begin{align*}
\left(2+(k-1)/n\right)c^n_k = q_n f_n'(q_n) \sum_{i=1}^{k-1} c^n_i c^n_{k-i}
= \frac{1}{m_0(c^n)} \sum_{i=1}^{k-1} c^n_i c^n_{k-i},
\end{align*}
whence (\ref{eq2}).
\end{proof}

To go on, we need some background on Bessel functions of the first kind.
Recall that for $k \geq 0$ and $z \in \cc$,
\begin{align}\label{dfbessel}
J_k(z)= \frac{z^k}{2^k}\sum_{l\geq 0} \frac{(-1)^l z^{2l} }{4^l l ! (l+k)!}.
\end{align}
We have the following recurrence relations, see \cite[Section 4.6]{aar}: 
for $k\geq 0$, $x\in \rr$,
\begin{align}
\label{bf1} &J_{k}'(x)=\frac{k}{x}J_{k}(x) -J_{k+1}(x) \\
\label{bf2} &J_{k+2}(x)=\frac{2(k+1)}{x}J_{k+1}(x)-J_{k}(x),\\
\label{bf3} &\frac{d}{dx} \left(x^{k+1}J_{k+1}(x)\right)=x^{k+1}J_{k}(x).
\end{align}
It is known, see \cite[Section 4.14]{aar}, 
that all the zeroes of $J_k$ are real.
For all $k\geq 1$, we denote by $j_k$ the first positive zero of $J_k$
and by $j_k'$ the first zero of $J_k'$.
The sequence $(j_k)_{k \geq 0}$ is increasing,
see \cite[Section 4.14]{aar}. Furthermore,
$0<j_k'<j_k$ for all $k\geq 1$, see \cite[page 3]{f1}.
We will also use that
there exists a constant $C>0$ such that for all $k\geq 1$, 
see \cite[pages 2-3]{f1},
\begin{align}\label{bz}
k + 2^{-1/3}|a_1'| k^{1/3} < j_k' < k + 2^{-1/3}|a_1'| k^{1/3}+ C k^{-1/3},
\end{align}
where $a_1'$ is, as previously defined, the largest negative zero
of $\Ai'$.

\begin{lem}\label{exiq}
Let $n\in\nn$ be fixed. The radius of convergence of the entire series
$f_n$ is $r_n=(j_{2n-1}/(2n))^2/2$. For all $x\in [0,r_n)$, there holds
\begin{align}\label{fnexpl}
f_n(x)=\sqrt{2x} \frac{J_{2n}(2n\sqrt{2x})}{J_{2n-1}(2n\sqrt{2x})}.
\end{align}
There exists a unique $q_n \in (0,r_n)$ such that $f_n(q_n)=1$. 
There holds $f_n'(q_n)=(n/q_n)(2q_n-1+1/n) \in (0,\infty)$.
Finally, as $n\to \infty$,
\begin{align*}
q_n= \frac{1}{2}\left[1+|a_1'|n^{-2/3} \right] + O(n^{-1}).
\end{align*}
\end{lem}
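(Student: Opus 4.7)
My plan is to identify $f_n$ explicitly by turning the quadratic recursion (\ref{dfalpha}) into a linear ODE via Riccati linearization.

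First, multiplying (\ref{dfalpha}) by $q^k$ and summing over $k\geq 2$ (using $\sum_{k\geq 2}(k-1)\alpha_k^n q^k = qf_n'(q) - f_n(q)$) yields the Riccati equation
$$q f_n'(q) = n f_n(q)^2 - (2n-1) f_n(q) + 2nq.$$
The substitution $f_n = -(q/n)\,u'/u$ linearizes this into $qu'' + 2n u' + 2n^2 u = 0$, and the change of variable $z = 2n\sqrt{2q}$ followed by $u(q) = z^{-(2n-1)} v(z)$ turns it into Bessel's equation of order $2n-1$ for $v$. Analyticity of $f_n$ at $0$ forces $v = J_{2n-1}$ up to constant, and using $\frac{d}{dz}(z^{-(2n-1)}J_{2n-1}(z)) = -z^{-(2n-1)}J_{2n}(z)$ (a consequence of (\ref{bf1})) collapses $-(q/n)u'/u$ into the closed form (\ref{fnexpl}). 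The first singularity of this ratio as $q$ increases from $0$ occurs at the first zero $j_{2n-1}$ of the denominator, giving $r_n = (j_{2n-1}/(2n))^2/2$; the numerator is nonzero there because $j_{2n-1} < j_{2n}$.

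Existence and uniqueness of $q_n$ then follow by monotonicity: positivity of all $\alpha_k^n$ (immediate from (\ref{dfalpha})) makes $f_n$ strictly increasing on $[0, r_n)$, with $f_n(0) = 0$ and $f_n(q) \to \infty$ as $q \to r_n^-$, so the intermediate value theorem provides a unique $q_n \in (0,r_n)$. Plugging $f_n(q_n) = 1$ directly into the Riccati identity gives $f_n'(q_n) = (n/q_n)(2q_n - 1 + 1/n)$, and this expression is automatically positive since the Taylor series for $f_n'$ has positive coefficients.

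The main obstacle is the asymptotic for $q_n$. I would rewrite $f_n(q_n)=1$ via the recurrence (\ref{bf1}) in the form $J_{2n-1}(z) = (2n/z)J_{2n}(z) + J_{2n}'(z)$ to obtain, with $z_n := 2n\sqrt{2q_n}$,
$$\frac{J_{2n}'(z_n)}{J_{2n}(z_n)} = \frac{z_n^2 - 4n^2}{2nz_n},$$
which identifies $z_n$ as a near-zero of $J_{2n}'$; the bound (\ref{bz}) places $j_{2n}' = 2n + |a_1'|n^{1/3} + O(n^{-1/3})$. Setting $\zeta = (z-2n)/(2n)^{1/3}$ and invoking Olver's uniform asymptotic $J_{2n}(z) \sim (2/(2n))^{1/3}\Ai(-2^{1/3}\zeta)$, together with its differentiated analogue, turns the left-hand side into a first-order Taylor expansion of $\Ai'/\Ai$ around $t = a_1'$, using $\Ai'(a_1')=0$ and $\Ai''(a_1') = a_1'\Ai(a_1')$. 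Matching this against the explicit right-hand side pins down $\zeta_n$, yielding $z_n = 2n - 1 + |a_1'|n^{1/3} + o(1)$, whence $q_n = z_n^2/(8n^2) = \tfrac{1}{2}(1 + |a_1'|n^{-2/3}) + O(n^{-1})$. The delicate point is controlling the Airy approximation to this precision uniformly in $n$; this is either read off from standard Olver error bounds or produced by carrying the Taylor expansion one order further and absorbing the excess into the stated $O(n^{-1})$.
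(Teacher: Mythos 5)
Your proposal is correct and takes a genuinely different, arguably more transparent, route in two places.

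For the explicit formula (\ref{fnexpl}), the paper's Steps~1--3 are a verification: it \emph{guesses} $g_n(z)=\sqrt{2z}\,J_{2n}(2n\sqrt{2z})/J_{2n-1}(2n\sqrt{2z})$, shows $g_n$ satisfies the Riccati ODE using Bessel recurrences, and then matches Taylor coefficients to conclude $f_n=g_n$. You instead \emph{derive} the formula: you pass from the recursion (\ref{dfalpha}) to the Riccati equation $qf_n'=nf_n^2-(2n-1)f_n+2nq$, linearize via $f_n=-(q/n)u'/u$ to obtain $qu''+2nu'+2n^2u=0$, and reduce to Bessel's equation of order $2n-1$; analyticity at $0$ singles out $J_{2n-1}$. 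This is clean and has the advantage of explaining \emph{why} Bessel functions appear. Your identification of $r_n$ is essentially the paper's (both rely on $J_{2n-1}$ having only real zeros and on $j_{2n-1}<j_{2n}$ so the pole is not cancelled), and the monotonicity argument for uniqueness of $q_n$ and the evaluation of $f_n'(q_n)$ from the ODE are identical.

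For the asymptotic of $q_n$, the paper and your proposal diverge more substantially. The paper rewrites $f_n(q_n)=1$ to show $J_{2n-1}(\gamma_n)+\gamma_nJ_{2n-1}'(\gamma_n)=0$ (hence $\gamma_n\ge j_{2n-1}'$) and, by a separate logarithmic-derivative argument, $\gamma_n\le j_{2n}'$; the sandwich together with the explicit bound (\ref{bz}) for $j_k'$ immediately gives $\gamma_n=2n+|a_1'|n^{1/3}+O(1)$ and hence the claimed $O(n^{-1})$ error for $q_n$. You instead rewrite $f_n(q_n)=1$ as $J_{2n}'(z_n)/J_{2n}(z_n)=(z_n^2-4n^2)/(2nz_n)$ (which is equivalent) and then expand $J_{2n}$ and $J_{2n}'$ via Olver's uniform Airy asymptotics in the transition region $z=2n+(2n)^{1/3}\zeta$. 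This does give the stated asymptotic (and, as you note, would even pin down the $O(1)$ term to $-1$), but you must control the Olver error bounds to the required precision uniformly in $n$ --- a nontrivial quantitative step that the paper sidesteps by simply quoting (\ref{bz}). In short: the paper's sandwich-plus-known-bound argument is more elementary and exactly matches the stated precision; your Olver-asymptotics route is heavier machinery that yields the same or slightly more, at the cost of having to verify uniform error control. As stated your asymptotic step is not quite a complete proof (you flag this yourself), but the gap is quantitative, not conceptual.
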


\begin{proof} We fix $n\in \nn$ and divide the proof into five steps.

\vip

{\it Step 1.} Put $s_n=(j_{2n-1}/(2n))^2/2$.
By definition of $j_{2n-1}$ and since $J_{2n-1}$ is odd on $\rr$
and has no complex zeroes, $J_{2n-1}(2n\sqrt{2z})$
does not vanish on $\{0<|z|<s_n\} \subset \cc$. We thus may define
$g_n(z):=\sqrt{2z} J_{2n}(2n\sqrt{2z} )/J_{2n-1}(2n\sqrt{2z})$ on $\{0<|z|<s_n\}$.
Using (\ref{dfbessel}), one easily checks that, as $z\to 0$,
$g_n(z) \sim z$, so that finally, $g_n(z)$ is holomorphic on the disc
$\{|z|<s_n\}$. Write $g_n(z)=\sum_{k\geq 0} \beta^n_k z^k$. Since
$g_n(z) \sim z$ near $0$, we deduce that $\beta^n_0=0$ and $\beta^n_1=1$.

\vip

{\it Step 2.} We now show that for all $x\in [0,s_n)$, there holds
\begin{align}\label{ed}
xg_n'(x)/n + (2-1/n) g_n(x)=g_n^2(x)+2x.
\end{align}
Write $g_n(x)=h_n(2n \sqrt{2x})/(2n)$, where $h_n(y)=yJ_{2n}(y)/J_{2n-1}(y)$.
Observing that 
$h_n(y)=(y^{2n}J_{2n}(y))/(y^{2n-1}J_{2n-1}(y))$ and using
(\ref{bf3}) and then (\ref{bf2}),
\begin{align*}
h_n'(y)=& \frac{y^{2n}J_{2n-1}(y)}{y^{2n-1}J_{2n-1}(y)} - 
\frac{y^{2n}J_{2n}(y) y^{2n-1}J_{2n-2}(y)}{(y^{2n-1}J_{2n-1}(y))^2}\\
=& y - h_n(y) \frac{J_{2n-2}(y)}{J_{2n-1}(y)}\\
=&y - h_n(y) \left[\frac{-J_{2n}(y)}{J_{2n-1}(y)} + \frac{2(2n-1)}{y} \right]\\
=& y + \frac{h_n^2(y)}{y} - 2 (2n-1)\frac{h_n(y)}{y}. 
\end{align*}
But $g_n'(x)=h_n'(2n\sqrt{2x})/\sqrt{2x}$, whence
\begin{align*}
\frac{xg_n'(x)} n
=&\frac{x}{n\sqrt{2x}}\left[2n\sqrt{2x} 
+\frac{h_n^2(2n\sqrt{2x})}{2n\sqrt{2x}} 
- 2 (2n-1)\frac{h_n(2n\sqrt{2x})}{2n\sqrt{2x}}  \right] \\
=& 2x+ g_n^2(x)-(2-1/n)g_n(x).
\end{align*}
\vip

{\it Step 3.} Let us check that the sequence $(\beta^n_k)_{k\geq 1}$
satisfies (\ref{dfalpha}). We already know that $\beta^n_1=1$.
Using (\ref{ed}),
\begin{align*}
\sum_{k\geq 1} k(\beta^n_k/n) x^k + \sum_{k\geq 1} (2-1/n) \beta^n_k x^k = 
\sum_{k\geq 2} x^k \sum_{i=1}^{k-1} \beta^n_i\beta^n_{k-i} + 2x  .
\end{align*}
Thus for all $k\geq 2$, $(k/n) \beta^n_k+  (2-1/n) \beta^n_k 
= \sum_{i=1}^{k-1} \beta^n_i\beta^n_{k-i}$ as desired.
Consequently,
$(\beta^n_k)_{k\geq 1}=(\alpha^n_k)_{k\geq 1}$, whence $f_n=g_n$ and 
$r_n=s_n= (j_{2n-1}/(2n))^2/2$.

\vip

{\it Step 4.} We know that $f_n$ is $C^\infty$ and increasing on $[0,r_n)$, that
$f_n(0)=0$ and that $\lim_{x \to r_n-} f_n(x)=\infty$ 
(due to (\ref{fnexpl}) and because $j_{2n}>j_{2n-1}$). Hence,
there exists a unique $q_n\in [0,r_n)$ such that $f_n(q_n)=1$ and
we have $0<f_n'(q_n)<\infty$. Applying (\ref{ed}) at $x=q_n$
(recall that $f_n=g_n$), we deduce that $q_nf_n'(q_n)/n+(2-1/n)=1+2q_n$,
so that $f_n'(q_n)=(n/q_n)[2q_n-1+1/n]$.

\vip

{\it Step 5.} Put $\gamma_n=2n\sqrt{2q_n}\in (0,2n\sqrt{2 r_n})=(0,j_{2n-1})$.
Then $f_n(q_n)=1$ rewrites $\gamma_n J_{2n}(\gamma_n)=2n J_{2n-1}(\gamma_n)$. 
We now prove that $j_{2n-1}'\leq \gamma_n\leq j_{2n}'$.

\vip

$\bullet$ First, using (\ref{bf1}) with $k=2n-1$, we get
$x J_{2n}(x)= (2n-1)J_{2n-1}(x) - xJ_{2n-1}'(x)$.
Since $\gamma_n J_{2n}(\gamma_n)=2n J_{2n-1}(\gamma_n)$, we find 
$J_{2n-1}(\gamma_n)+\gamma_nJ'_{2n-1}(\gamma_n)=0$. Thus $\gamma_n\geq j'_{2n-1}$,
because for $0<x<j_{2n-1}'<j_{2n-1}$, $J_{2n-1}(x)$ and 
$J'_{2n-1}(x)$ are positive. 

\vip

$\bullet$ We next show that $\gamma_n \leq j'_{2n}$. We already know
that $\gamma_n < j_{2n-1}$. We thus assume below that $j'_{2n} < j_{2n-1}$, because
else, there is nothing to do.
There holds $(2n/\gamma_n)[\gamma_n^{2n} J_{2n-1}(\gamma_n) ]
/[\gamma_n^{2n} J_{2n}(\gamma_n)]=1$, whence 
$(2n/\gamma_n)[\log (\gamma_n^{2n} J_{2n}(\gamma_n))]'=1$ by (\ref{bf3}).
Consequently, 
$$
[\log J_{2n}(\gamma_n)]'=(\gamma_n/2n)-(2n/\gamma_n).
$$
We know by (\ref{bz}) that $j'_{2n}>2n$. Hence $J_{2n}'>0$ and thus
$[\log J_{2n}]'>0$ on $[0,2n]$. Thus $\gamma_n>2n$, because 
for $x\leq 2n$, we have $[\log J_{2n}(x)]'>0$ and 
$(x/2n)-(2n/x)\leq 0$.
Hence $(\gamma_n/2n)-(2n/\gamma_n)>0$. Thus 
$\gamma_n < j_{2n}'$, because for $x\in (j'_{2n},j_{2n-1})$, $J_{2n}'(x)<0$
so that $[\log J_{2n}(x)]'<0$.

\vip

We thus have checked that $\gamma_n \in (j'_{2n-1},j'_{2n})$. Using (\ref{bz}),
we deduce that
\begin{align*}
(2n-1)+2^{-1/3}|a_1'|(2n-1)^{1/3} < \gamma_n < 2n+2^{-1/3}|a_1'|(2n)^{1/3} 
+ C(2n)^{-1/3},
\end{align*}
so that $\gamma_n = 2n + |a_1'|n^{1/3} + O(1)$. Recalling that
$q_n=(\gamma_n/(2n))^2/2$, we easily deduce that 
$q_n=(1+ |a_1'|n^{-2/3})/2 +O(n^{-1})$
as desired.
\end{proof}

We now have all the tools to give the

\begin{preuve} {\it of Theorem \ref{mr1}-(i).} 
Fix $n\in \nn$. Due to Lemma \ref{exiq}, there is a unique $q_n>0$ such
that $f_n(q_n)=1$ and $f_n'(q_n)<\infty$. Applying Lemma \ref{equi},
we deduce the existence and uniqueness of a solution to $(E_n)$.
\end{preuve}

Let us now give two weak forms of the equilibrium equation $(E_n)$.

\begin{lem}\label{wf}
For $n\in\nn$, consider the unique solution $(c^n_k)_{k\geq 1}$
to $(E_n)$. 
For 
any $\phi,\psi:\nn\mapsto \rr$ with at most polynomial growth,
\begin{align}\label{wf1}
&\frac{1}{m_0(c^n)} \sum_{k,l \geq 1} \left[\phi(k+l)-\phi(k)-\phi(l) \right]
c^n_k c^n_l
= \frac{1}{n} \sum_{k \geq 2} \left[\phi(k)-k\phi(1) \right](k-1)c^n_k, \\
\label{wf2}
&\frac{2}{m_0(c^n)} \sum_{k,l \geq 1} \left[\psi(k+l)-\psi(k)\right]
k c^n_k c^n_l
= \frac{1}{n} \sum_{k \geq 2} \left[\psi(k)-\psi(1) \right]k(k-1)c^n_k.
\end{align}
\end{lem}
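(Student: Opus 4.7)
The plan is to derive both identities directly from the stationary system $(E_n)$, using (\ref{eq2}) for $k\geq 2$ and the auxiliary relation (\ref{eq1}) to reincorporate the $k=1$ term. Before any manipulation, I would record that all the sums appearing in (\ref{wf1}) and (\ref{wf2}) are absolutely convergent: by Lemmas \ref{equi}--\ref{exiq} one has $c^n_k = \alpha^n_k q_n^{k-1}/f_n'(q_n)$ with $q_n<r_n$, so $\sum_k \alpha^n_k q_n^k$ sits strictly inside the disc of convergence of $f_n$; hence $c^n_k$ decays geometrically, every moment $m_i(c^n)$ is finite, and the polynomial growth of $\phi,\psi$ legitimizes all the Fubini-type rearrangements below.

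For (\ref{wf1}), multiply (\ref{eq2}) by $\phi(k)$ and sum over $k\geq 2$. The left-hand side reads
$$2\sum_{k\geq 2}\phi(k)c^n_k+\frac{1}{n}\sum_{k\geq 2}(k-1)\phi(k)c^n_k.$$
I would complete the first sum to $k\geq 1$ by adding and subtracting $2\phi(1)c^n_1$ and then use (\ref{eq1}), i.e. $2\phi(1)c^n_1=\frac{\phi(1)}{n}\sum_{k\geq 2}k(k-1)c^n_k$, to fold the correction into the fragmentation piece. This yields
$$2\sum_{k\geq 1}\phi(k)c^n_k+\frac{1}{n}\sum_{k\geq 2}(k-1)\bigl[\phi(k)-k\phi(1)\bigr]c^n_k.$$
On the right-hand side the change of variables $(i,k-i)\leftrightarrow(k,l)$ collapses the double sum into $\frac{1}{m_0(c^n)}\sum_{k,l\geq 1}\phi(k+l)c^n_k c^n_l$. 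Finally, using $m_1(c^n)=1$ in the symmetric form
$$2\sum_{k\geq 1}\phi(k)c^n_k=\frac{1}{m_0(c^n)}\sum_{k,l\geq 1}\bigl[\phi(k)+\phi(l)\bigr]c^n_k c^n_l,$$
and transposing terms, I obtain exactly (\ref{wf1}).

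For (\ref{wf2}), the cleanest route is to apply (\ref{wf1}) to the test function $\phi(k):=k\psi(k)$, which still has at most polynomial growth. The algebraic identity
$$\phi(k+l)-\phi(k)-\phi(l)=k\bigl[\psi(k+l)-\psi(k)\bigr]+l\bigl[\psi(k+l)-\psi(l)\bigr]$$
combined with the symmetry $k\leftrightarrow l$ of $c^n_k c^n_l$ converts the left-hand side of (\ref{wf1}) into $\frac{2}{m_0(c^n)}\sum_{k,l\geq 1}k\bigl[\psi(k+l)-\psi(k)\bigr]c^n_k c^n_l$, while $\phi(k)-k\phi(1)=k[\psi(k)-\psi(1)]$ turns the right-hand side into $\frac{1}{n}\sum_{k\geq 2}k(k-1)[\psi(k)-\psi(1)]c^n_k$, which is (\ref{wf2}).

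No step is a genuine obstacle; the only delicate point is the bookkeeping of the $k=1$ term, which is resolved once one notices that (\ref{eq1}) is precisely the relation needed to absorb it into the fragmentation sum. The geometric decay of $c^n_k$ ensures every rearrangement is justified.
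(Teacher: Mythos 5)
Your proof is correct and follows essentially the same route as the paper: sum (\ref{eq2}) against $\phi(k)$, fold in the $k=1$ contribution via (\ref{eq1}), symmetrize to obtain (\ref{wf1}), then specialize to $\phi(k)=k\psi(k)$ to get (\ref{wf2}). One cosmetic slip: the symmetric identity $2\sum_{k}\phi(k)c^n_k=\frac{1}{m_0(c^n)}\sum_{k,l}[\phi(k)+\phi(l)]c^n_kc^n_l$ uses only $\sum_l c^n_l=m_0(c^n)$ (a definition), not $m_1(c^n)=1$; the latter enters, if at all, only when one replaces $\sum_{k\geq 2}k(k-1)c^n_k$ by $m_2(c^n)-1$, a substitution your version in fact avoids.
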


\begin{proof}
Let $n\in\nn$ be fixed.
\vip

{\it Step 1.} We first check that there is $u_n>1$ such that 
$\sum_{k\geq 1}u_n^k c^n_k<\infty$. This allows us to justify the convergence
of all the series in Steps 2 and 3 below. 
We know (see Lemma \ref{equi}) that 
$c^n_k=\alpha^n_k q_n^{k-1}/f_n'(q_n)$. Hence for any $u>0$,
$\sum_{k\geq 1} u^k c^n_k = f_n(q_n u)/(q_nf_n'(q_n))$. Since the radius of
convergence $r_n$ of the entire series $f_n$ 
satisfies $r_n>q_n$ by Lemma \ref{exiq}, 
the result follows: choose $u_n>1$ such that $q_n u_n < r_n$.

\vip

{\it Step 2.} We now prove (\ref{wf1}). 
Multiply (\ref{eq1})-(\ref{eq2}) 
by $\phi(k)$ and sum for $k\geq 1$.
We get 
\begin{align*}
&\frac{1}{m_0(c^n)}\left[\sum_{k\geq 2} \phi(k) \sum_{i=1}^{k-1}c_i^nc_{k-i}^n
- 2m_0(c^n)\sum_{k\geq 1} \phi(k)c_k^n\right] \\
=& \frac{1}{n}\left[\sum_{k\geq 2} \phi(k)(k-1)c_k^n
- [m_2(c^n)-1]\phi(1)\right]. 
\end{align*}
But
$\sum_{k\geq 2} \phi(k) \sum_{i=1}^{k-1}c_i^nc_{k-i}^n=
\sum_{i,j \geq 1} \phi(i+j)c^n_i c^n_j$. Furthermore, there holds
$2m_0(c^n)\sum_{k\geq 1} \phi(k)c_k^n=\sum_{i,j \geq 1} \left(\phi(i)+\phi(j) \right)
c^n_i c^n_j$, as well as $[m_2(c^n)-1]\phi(1) = [m_2(c^n)-m_1(c^n)]\phi(1)=
\sum_{k\geq 1} k\phi(1)(k-1)c_k^n$. This ends the proof of (\ref{wf1}).

\vip

{\it Step 3.}
To check (\ref{wf2}), it suffices to apply (\ref{wf1}) to the function
$\phi(k)=k\psi(k)$ and to use that by symmetry,
$\sum_{k,l\geq 1} [(k+l)\psi(k+l)-k\psi(k)-l\psi(l)]c^n_kc^n_l=
2\sum_{k,l\geq 1} k [\psi(k+l)-\psi(k)]c^n_kc^n_l$.
\end{proof}

\begin{rem}\label{multconst}
For $n\in\nn$, consider the solution $(c^n_k)_{k\geq 1}$ to $(E_n)$,
write $p^n_k=c_k^n/m_0(c^n)$ and then $d^n_k=p^n_k/k$. Then one easily
checks, starting from (\ref{wf1}), that the sequence $(d^n_k)_{k\geq 1}$
solves, for all $\phi:\nn\mapsto \rr$ with at most polynomial growth,
\begin{align*}
\frac12\sum_{k,l\geq 1}\left[\phi(k+l)-\phi(k)-\phi(l)\right] k l d^n_kd^n_l
= \sum_{k\geq 1}  \left[\phi(k)-\phi(1) \right]\frac{k(k-1)}{2n} d^n_k
\end{align*}
and has total mass $\sum_{k\geq 1} k d^n_k =1$. Hence $(d^n_k)_{k\geq 1}$
is an equilibrium state for a coagulation-fragmentation equation
with multiplicative coagulation kernel (particles with masses $k,l$ merge at
rate $kl$) and where each particle with mass $k$ breaks into $k$
particles with mass $1$ at rate $k(k-1)/(2n)$. This explains the similarities
between Theorem \ref{mr2} and the results found by R\'ath-T\'oth \cite{rt}:
the (non size-biased) particles' mass distribution $(p^n_k)_{k\geq 1}$
has the same limit, as $n\to \infty$, as the size-biased
particles' mass distribution of the model considered by R\'ath-T\'oth,
see \cite[Eq. (15)]{rt}. Observe however that the fragmentation rate 
in \cite{rt} is rather $(k-1)/n$, which thus differs from $k(k-1)/(2n)$.
What seems important is just that roughly, for $n$ very large,
only huge particles break down into atoms.
\end{rem}

We are ready to handle the

\begin{preuve} {\it of Theorem \ref{mr1}-(ii).}
First, we know from Lemmas \ref{equi} and \ref{exiq} 
that 
\begin{align*}
m_0(c^n)=\frac 1{q_nf_n'(q_n)}=\frac 1 {n(2q_n-1+1/n)}
=\frac 1 {n[|a_1'|n^{-2/3} + O(1/n)]} \sim \frac1{|a_1'|n^{1/3}}.
\end{align*}
Next we use (\ref{wf1}) with $\phi(k)=-1$. This gives
\begin{equation}\label{ettac2}
m_0(c^n)=\frac{m_2(c^n)+m_0(c^n)-2m_1(c^n)} n,
\end{equation}
whence, since $m_1(c^n)=1$,
$$
m_2(c^n)= (n-1)m_0(c^n) +2 \sim \frac{n^{2/3}}{|a_1'|}.
$$
Theorem \ref{mr1}-(ii) is established.
\end{preuve}

We can now prove the convergence of the particles' mass distribution.

\begin{preuve} {\it of Theorem \ref{mr2}.}
Let us put, for $k\geq 1$, 
$$p_k= \frac 2 {4^k k}\begin{pmatrix} 2k-2 \\ k-1 \end{pmatrix}.  
$$
Using the Stirling formula, one immediately checks (\ref{eqpk}).
Next, recall that the Catalan numbers, defined by
$C_i = \frac{1}{i+1}\begin{pmatrix} 2i \\ i \end{pmatrix}$ 
for all $i\geq 0$, satisfy, see \cite{w}
\begin{align*}
&C_{i}=\sum_{j=0}^{i-1} C_j C_{i-1-j}, \quad (i\geq 1),\\
&\sum_{i\geq 0} C_i x^i = (1-\sqrt{1-4x})/(2x),\quad x\in [0,1/4].
\end{align*} 
Observing that $p_k=2. 4^{-k}C_{k-1}$, we easily deduce that
$p_k= \frac 1 2 \sum_{i=1}^{k-1} p_i p_{k-i}$ for $k\geq 2$, as well
as $\sum_{k\geq 1} p_k=2 \sum_{k\geq 1} C_{k-1}(1/4)^k= [\sum_{i\geq 0} C_{i}(1/4)^i]/2
=1$.
\vip We now check that (\ref{limsimple}) has at most one nonnegative solution. 
To this end, it suffices to show that (\ref{limsimple}) implies that
$p_1=1/2$ (because this will determine the value of $p_2=(1/2)p_1^2$,
of $p_3=(1/2)[p_1p_2+p_2p_1]$ and so on). To this end, it suffices to write
$p_1=1-\sum_{k\geq 2}p_k=1-(1/2)\sum_{k\geq 2}\sum_{i=1}^{k-1}p_ip_{k-i}=
1-(1/2)(\sum_{k\geq 1} p_k)^2=1-1/2=1/2$.

\vip
It only remains to prove that $\lim_{n} \sum_{k\geq 1}|p^n_k-p_k|=0$.
Since $\sum_{k\geq 1} p^n_k=\sum_{k\geq 1} p_k=1$, it classically suffices
to prove that $\lim_n p^n_k=p_k$ for all $k\geq 1$.
First of all, we observe from (\ref{eq1}) and Theorem \ref{mr1}-(ii) that
\begin{align*}
p^n_1 = \frac{c^n_1}{m_0(c^n)}=\frac{m_2(c^n)-1}{2n m_0(c^n)}\sim 
\frac{n^{2/3}/|a_1'|}{2n/(|a_1'|n^{1/3})} \to \frac 1 2=p_1
\end{align*}
as $n\to \infty$. Next, we work by induction on $k$. Assume thus that for some
$k\geq 2$, $\lim_n p^n_l=p_l$ for $l=1,\dots,k-1$. Then, using (\ref{eq2}),
we deduce that
\begin{align*}
p^n_k = \frac{c^n_k}{m_0(c^n)}=\frac{1}{2+(k-1)/n}\sum_{i=1}^{k-1} p^n_i p^n_{k-i}
\to \frac{1}{2}\sum_{i=1}^{k-1} p_i p_{k-i} = p_k
\end{align*}
as $n\to \infty$. This concludes the proof.
\end{preuve}

We now study the size-biased particles' mass distribution. 
We start with the computation of all the 
moments and deduce a convergence result.

\begin{lem}\label{tlm}
For each $n\in\nn$, consider the unique solution $(c^n_k)_{k\geq 1}$
to $(E_n)$ and the probability measure
$\mu_n=\sum_{k\geq 1} k c^n_k \delta_{n^{-2/3}k}$ on $(0,\infty)$.

(i) For any $i\geq 1$,
$$
m_{i+1}(c^n) \stackrel {n\to \infty} \sim M_i n^{2i/3}, 
$$
where the sequence
$(M_i)_{i\geq 0}$ is defined by $M_0=1$, $M_1=1/|a_1'|$ and, for $i\geq 1$,
\begin{align*}
M_{i+1} = 2|a_1'| \sum_{j=0}^{i-1} \begin{pmatrix} i \\ j \end{pmatrix} 
M_j M_{i-j-1}.
\end{align*}

(ii) There is a probability measure $\mu$ on $[0,\infty)$ such that
for any $\phi\in C([0,\infty))$ with at most polynomial growth, 
\begin{align*}
\sum_{k\geq 1} \phi(n^{-2/3} k) kc_k^n = 
\int_0^\infty \phi(x)\mu_n(dx) \stackrel{n\to \infty} \to 
\int_0^\infty 
\phi(x)\mu(dx).
\end{align*}
This probability measure satisfies, for all $i\geq 0$, $\int_0^\infty
x^i \mu(dx)=M_i$.
\end{lem}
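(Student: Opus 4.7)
The plan is to derive (i) by induction on $i$ from a moment recursion, and then deduce (ii) by the method of moments. The key ingredient is the weak form \eqref{wf2} applied with the test function $\psi(k) = k^i$ for $i \ge 1$: binomial expansion $(k+l)^i - k^i = \sum_{j=0}^{i-1}\binom{i}{j} k^j l^{i-j}$, together with the simplification $\sum_{k\ge 1}(k^i-1)k(k-1)c_k^n = m_{i+2}(c^n) - m_{i+1}(c^n) - m_2(c^n) + 1$ on the right, yields the exact identity
\begin{align*}
m_{i+2}(c^n) = \frac{2n}{m_0(c^n)}\sum_{j=0}^{i-1}\binom{i}{j} m_{j+1}(c^n)\, m_{i-j}(c^n) + m_{i+1}(c^n) + m_2(c^n) - 1.
\end{align*}
Theorem \ref{mr1}(ii) gives $2n/m_0(c^n) \sim 2|a_1'| n^{4/3}$ and supplies the base cases $m_1(c^n) = M_0 = 1$ and $m_2(c^n) \sim M_1 n^{2/3}$. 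Assuming inductively that $m_{j+1}(c^n) \sim M_j n^{2j/3}$ for $0 \le j \le i$, the leading term on the right becomes $2|a_1'| n^{2(i+1)/3}\sum_{j=0}^{i-1}\binom{i}{j} M_j M_{i-j-1}$, which matches the claimed recursion for $M_{i+1}$; the corrective terms $m_{i+1}(c^n) + m_2(c^n) - 1$ are only $O(n^{2i/3})$ and thus negligible.

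For (ii), I would use the classical method of moments. From (i), $\int_0^\infty x^i \mu_n(dx) = n^{-2i/3} m_{i+1}(c^n) \to M_i$ for each $i \ge 0$, and boundedness of the first moment gives tightness of $(\mu_n)$ on $[0,\infty)$. Any weak subsequential limit $\mu$ is then a probability measure whose moments equal $M_i$, so it only remains to check that such a $\mu$ is unique. I would establish the bound $M_i \le K^i\, i!$ by induction: since $\binom{i}{j}\, j!(i-j-1)! = i!/(i-j)$ and $\sum_{k=1}^{i} 1/k = H_i \le i$, the recursion combined with the inductive hypothesis yields $M_{i+1} \le 2|a_1'|\, K^{i-1}\, i!\, H_i$, which is dominated by $K^{i+1}(i+1)!$ as soon as $K^2 \ge 2|a_1'|$ (the base values $M_0, M_1$ then pin down an admissible $K$). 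Such factorial growth gives a positive radius of convergence for the moment generating function of any limit $\mu$, hence moment determinacy (Carleman's criterion suffices), and thus convergence $\mu_n \to \mu$ along the full sequence.

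To extend the convergence from bounded to polynomially-growing continuous $\phi$, I would use uniform integrability: if $|\phi(x)| \le C(1+x^p)$, then $\int x^{p+1}\mu_n(dx) \to M_{p+1} < \infty$, so the tail contributions $\int_R^\infty |\phi|\, d\mu_n$ are uniformly small in $n$, and a standard truncation argument reduces the statement to the already-established convergence against bounded continuous test functions.

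The main technical obstacle, in my view, is the factorial moment bound. The recursion is quadratic, and naive estimates already produce super-factorial growth; what rescues the argument is the cancellation $\binom{i}{j}\, j!(i-j-1)!/i! = 1/(i-j)$, which collapses the convolution in the inductive step to a harmonic sum and generates the extra factor of $(i+1)$ that exactly beats the quadratic growth. Once this bound is in hand, the rest of (ii) is routine method-of-moments bookkeeping.
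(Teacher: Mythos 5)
Your proposal is correct and follows essentially the same route as the paper: derive the moment recursion from the weak formulation (\ref{wf2}) with $\psi(k)=k^i$, induct to get the asymptotics $m_{i+1}(c^n)\sim M_i n^{2i/3}$, then conclude by the method of moments using a factorial bound $M_i\leq K^i\,i!$ for moment determinacy and uniform integrability for the extension to polynomial test functions. The paper obtains the bound $M_i\le (2|a_1'|)^i i!$ by what it calls an ``immediate (and rough) induction'' using $|a_1'|>1$, so the harmonic-sum cancellation you highlight is in fact the same elementary step written out in more detail rather than a genuinely different idea.
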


\begin{proof}
Applying (\ref{wf2}) with $\phi(k)=k^i$ with $i\geq 1$,
we easily get
\begin{align*}
\frac{2}{m_0(c^n)} \sum_{j=0}^{i-1} \begin{pmatrix}i\\j \end{pmatrix}
m_{j+1}(c^n)m_{i-j}(c^n)
=\frac 1 n \left(m_{i+2}(c^n)- m_{i+1}(c^n)-m_2(c^n)+1\right),
\end{align*}
so that
\begin{align*}
m_{i+2}(c^n)=m_{i+1}(c^n)+m_2(c^n)-1 + \frac{2n}{m_0(c^n)} 
\sum_{j=0}^{i-1} \begin{pmatrix}i\\j \end{pmatrix}
m_{j+1}(c^n)m_{i-j}(c^n).
\end{align*}
From this and the fact that we already know that $m_2(c^n)\sim M_1n^{2/3}$
and that $m_0(c^n)\sim n^{-1/3}/|a_1'|$, one can easily 
check point (i) by induction.

\vip

Next, it holds that $\int_0^\infty x^i \mu_n(dx)= n^{-2i/3}m_{i+1}(c^n)$
for all $i\geq 1$. Consequently, we know from point (i) that
$\lim_{n\to \infty} \int_0^\infty x^i \mu_n(dx) = M_i$ for any $i\geq 1$ .
In particular, $\sup_n \int_0^\infty x \mu_n(dx)<\infty$, so that
the sequence $(\mu_n)_{n\geq 1}$ is tight. Furthermore, any limit point $\mu$ 
is a probability measure on $[0,\infty)$
satisfying $\int_0^\infty x^i \mu(dx)=M_i$ for all $i\geq 1$. 

\vip

But such a probability measure $\mu$ is unique: an immediate (and rough)
induction using that $|a_1'|>1$ 
shows that $M_i\leq (2|a_1'|)^i i !$ for all $i\geq 1$. This 
implies the finiteness of an exponential moment for $\mu$,
so that $\mu$ is characterized by its moments.
As a conclusion, $\mu_n$ goes weakly, as $n\to\infty$, to $\mu$. 
This shows point (ii) for all $\phi:[0,\infty)\mapsto \rr$ continuous 
and bounded. The extension to continuous functions with at most polynomial
growth easily follows from point (i). 
\end{proof}

Let us now show that $\mu$ satisfies some equilibrium equation.

\begin{lem}\label{lequi}
Consider the probability measure $\mu$ on $[0,\infty)$
defined in Lemma \ref{tlm}. For all $\phi \in C^1([0,\infty))$ such that 
$\phi$ and $\phi'$ have
at most polynomial growth,
\begin{align*}
&2|a_1'| \int_0^\infty \int_0^\infty \left[\frac{\phi(x+y)-\phi(x)}{y}
\indiq_{\{y>0\}}
+\phi'(x)\indiq_{\{y=0\}}\right]\mu(dx)\mu(dy) \\
&\hskip2cm= 
\int_0^\infty x[\phi(x)-\phi(0)]\mu(dx).
\end{align*}
\end{lem}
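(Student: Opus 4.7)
The plan is to apply the second weak form \eqref{wf2} to the test function $\psi(k)=\phi(n^{-2/3}k)$, rewrite both sides in terms of the probability measure $\mu_n=\sum_{k\geq 1}kc^n_k\delta_{n^{-2/3}k}$ introduced in Lemma \ref{tlm}, and pass to the limit $n\to\infty$.

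First I rewrite the two sides. Setting $x:=n^{-2/3}k$ and using $k(k-1)c^n_k=n^{2/3}(x-n^{-2/3})(kc^n_k)$, the right-hand side of \eqref{wf2} becomes
\begin{equation*}
n^{-1/3}\int_0^\infty \bigl[\phi(x)-\phi(n^{-2/3})\bigr]\bigl(x-n^{-2/3}\bigr)\,\mu_n(dx).
\end{equation*}
For the left-hand side I use the identity $c^n_l=(lc^n_l)/l=n^{-2/3}(lc^n_l)/y$ with $y:=n^{-2/3}l>0$, which converts any sum against $c^n_l$ into an integral against $y^{-1}\mu_n(dy)$ carrying an extra factor $n^{-2/3}$. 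Summing first in $k$ and then in $l$ yields
\begin{equation*}
\mathrm{LHS}=\frac{2\,n^{-2/3}}{m_0(c^n)}\int_0^\infty\!\!\int_0^\infty G_\phi(x,y)\,\mu_n(dx)\mu_n(dy),\quad G_\phi(x,y):=\frac{\phi(x+y)-\phi(x)}{y}.
\end{equation*}
Dividing the identity $\mathrm{LHS}=\mathrm{RHS}$ by $n^{-1/3}$ and invoking Theorem \ref{mr1}(ii) (which gives $n^{-1/3}/m_0(c^n)\to|a_1'|$) yields
\begin{equation*}
2|a_1'|\bigl(1+o(1)\bigr)\int\!\!\int G_\phi\,d(\mu_n\otimes\mu_n)=\int\bigl[\phi(x)-\phi(n^{-2/3})\bigr]\bigl(x-n^{-2/3}\bigr)\,\mu_n(dx).
\end{equation*}

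Passing to the limit on the right-hand side is routine: the integrand converges to $[\phi(x)-\phi(0)]x$, which is continuous of polynomial growth, so Lemma \ref{tlm}(ii) (combined with the uniform moment bounds from Lemma \ref{tlm}(i) to control the $n$-dependence of the integrand) delivers $\int_0^\infty[\phi(x)-\phi(0)]x\,\mu(dx)$. The main obstacle is the left-hand side, since $G_\phi$ is a priori singular along $y=0$. I resolve this via the mean value theorem: for $y>0$,
\begin{equation*}
G_\phi(x,y)=\int_0^1\phi'(x+sy)\,ds,
\end{equation*}
and extending this formula to $y=0$ by setting $G_\phi(x,0):=\phi'(x)$ yields a function that is continuous on $[0,\infty)^2$ with polynomial growth inherited from $\phi'$. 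The moment convergence of Lemma \ref{tlm} extends by a standard truncation argument from one-dimensional marginals to products, so $\int\!\!\int G_\phi\,d(\mu_n\otimes\mu_n)\to\int\!\!\int G_\phi\,d(\mu\otimes\mu)$. Splitting this limit according to $\{y>0\}$ versus $\{y=0\}$ and inserting the two definitions of $G_\phi$ reproduces exactly the integrand in the claimed identity, completing the proof.
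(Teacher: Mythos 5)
Your proof is correct and follows essentially the same route as the paper: apply (\ref{wf2}) with $\psi(k)=\phi(n^{-2/3}k)$, rewrite both sides against $\mu_n$, renormalize by $n^{1/3}$, and pass to the limit via Lemma \ref{tlm} after replacing the singular-looking kernel $[\phi(x+y)-\phi(x)]/y$ by its continuous extension to $y=0$. The only cosmetic difference is that you reach continuity and polynomial growth of the extended kernel through the mean-value representation $\int_0^1\phi'(x+sy)\,ds$, whereas the paper simply defines $\Gamma$ with indicator functions and verifies the same properties; the two functions coincide and the rest of the argument is identical.
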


\begin{proof}
We consider $\phi$ as in the statement and, for $n\in \nn$,
$\mu_n=\sum_{k\geq 1}kc^n_k\delta_{n^{-2/3}k}$ as in
Lemma \ref{tlm}. Apply (\ref{wf2}) with $\psi(k)=\phi(n^{-2/3}k)$:
\begin{align*}
&\frac{2n^{-2/3}}{m_0(c^n)} \sum_{k,l\geq 1} 
\frac{\phi(n^{-2/3}(k+l))-\phi(n^{-2/3}k)}{n^{-2/3}l}
k c^n_k lc^n_l\\
=& \frac {n^{2/3}} n \sum_{k \geq 1} \left(\phi(n^{-2/3}k)-\phi(n^{-2/3}) \right) 
(n^{-2/3}k-n^{-2/3}) k c^n_k.
\end{align*}
Multiply this equality by $n^{1/3}$. 
In terms of $\mu_n$, this can be written as
\begin{align*}
&\frac{2n^{-1/3}}{m_0(c^n)}
\int_0^\infty\int_0^\infty \frac{\phi(x+y)-\phi(x)}{y}
\mu_n(dx)\mu_n(dy)\\
=& \int_0^\infty [\phi(x)-\phi(n^{-2/3})](x-n^{-2/3})\mu_n(dx).
\end{align*}
Recall that $m_0(c^n)\sim n^{-1/3}/|a_1'|$ as $n \to \infty$, so that
\begin{align*}
&2|a_1'| \int_0^\infty\int_0^\infty \frac{\phi(x+y)-\phi(x)}{y}
\mu_n(dx)\mu_n(dy)\\
\sim&  
\int_0^\infty [\phi(x)-\phi(n^{-2/3})](x-n^{-2/3})\mu_n(dx)\\
\sim& \int_0^\infty [\phi(x)-\phi(0)] x\mu_n(dx).
\end{align*}
To obtain the last equivalent, use that $\phi$ is continuous, that
$\sup_n \int_0^\infty x\mu_n(dx)<\infty$ and that $\sup_n \int_0^\infty
|\phi(x)-\phi(0)| \mu_n(dx)<\infty$ by Lemma \ref{tlm}-(i) since
$\phi$ has at most polynomial growth.
Define now the function $\Gamma(x,y)=([\phi(x+y)-\phi(x)]/y) \indiq_{\{y>0\}}
+\phi'(x)\indiq_{\{y=0\}}$, which is continuous and has at most polynomial growth
on $[0,\infty)^2$. Since $\mu_n$ does not give weight to $0$, we have
\begin{align}\label{tpttl}
&2|a_1'| \int_0^\infty\int_0^\infty \Gamma(x,y)
\mu_n(dx)\mu_n(dy) \\
=& 2|a_1'| \int_0^\infty\int_0^\infty \frac{\phi(x+y)-\phi(x)}{y}
\mu_n(dx)\mu_n(dy)  \nonumber\\
\sim& \int_0^\infty [\phi(x)-\phi(0)] x\mu_n(dx). \nonumber
\end{align}
Recall that for any $\psi\in C([0,\infty))$ with at most polynomial 
growth,
there holds
$\int_0^\infty \psi(x)\mu_n(dx) \stackrel {n\to \infty} \to
\int_0^\infty \psi(x)\mu(dx)$
due to Lemma \ref{tlm}.
This implies that for all $\Psi\in C([0,\infty)^2)$ 
with at most polynomial growth,
$\int_0^\infty\int_0^\infty \Psi(x,y)\mu_n(dx)\mu_n(dy)
\stackrel {n\to \infty} \to
\int_0^\infty\int_0^\infty \Psi(x,y)\mu(dx)\mu(dy)$.
Taking the limit as $n\to\infty$ in (\ref{tpttl}), we deduce that
\begin{align*}
2|a_1'| \int_0^\infty\int_0^\infty \Gamma(x,y)
\mu(dx)\mu(dy)=\int_0^\infty [\phi(x)-\phi(0)] x\mu(dx)
\end{align*}
as desired.
\end{proof}

We now try to determine a quantity resembling the Laplace transform of $\mu$.
This is a usual trick for Smoluchowski's coagulation with constant kernel,
see e.g. Deaconu-Tanr\'e \cite{dt}.

\begin{lem}\label{laplace}
Consider the probability measure $\mu$ defined in Lemma \ref{tlm}.
Then for all $q\geq 0$,
\begin{align*}
\ell(q):=\int_0^\infty (1-e^{-qx})\frac{\mu(dx)}x = \frac{-\Ai'(q+a_1')}
{|a_1'|\Ai(q+a_1')}.
\end{align*}
\end{lem}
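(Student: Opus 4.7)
The plan is to apply Lemma \ref{lequi} to the test function $\phi_q(x) = 1-e^{-qx}$, which is bounded with bounded derivative and hence has polynomial growth, and to exploit the factorization $\phi_q(x+y) - \phi_q(x) = e^{-qx}(1-e^{-qy})$ so that the double integral on the left-hand side decouples as a product. Differentiation under the integral, justified by Lemma \ref{tlm}-(i) since $\mu$ has moments of every order, gives $\ell'(q) = \int_0^\infty e^{-qx}\mu(dx)$ and $\ell''(q) = -\int_0^\infty xe^{-qx}\mu(dx)$. The left-hand side of Lemma \ref{lequi} then becomes $2|a_1'|\ell'(q)\ell(q)$, while the right-hand side equals $M_1 - \int xe^{-qx}\mu(dx) = 1/|a_1'| + \ell''(q)$, yielding
\[
|a_1'|\bigl(\ell^2(q)\bigr)' = \frac{1}{|a_1'|} + \ell''(q).
\]

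Next I would integrate from $0$ to $q$ and use the initial values $\ell(0)=0$ and $\ell'(0)=\mu([0,\infty))=1$ to reduce this identity to the first-order Riccati equation
\[
\ell'(q) = |a_1'|\ell^2(q) - \frac{q}{|a_1'|} + 1.
\]
The standard substitution $\ell = -u'/(|a_1'|u)$ transforms this into $u''(q) = (q+a_1')u(q)$, which (using $a_1' = -|a_1'|$) is precisely the Airy equation in the shifted variable $q+a_1'$. The boundary condition $\ell(0)=0$ forces $u'(0)=0$, and since $a_1'$ is by definition the largest negative zero of $\Ai'$, this singles out $u(q) \propto \Ai(q+a_1')$ (the $\Bi$-branch is excluded by requiring $\ell$ to remain finite on $[0,\infty)$). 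The resulting candidate is thus $\ell(q) = -\Ai'(q+a_1')/(|a_1'|\Ai(q+a_1'))$.

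To finish, I would verify by direct computation that this candidate meets both initial conditions: $\ell(0) = 0$ is immediate from $\Ai'(a_1') = 0$, and $\ell'(0) = -a_1'/|a_1'| = 1$ follows from the Airy equation evaluated at $q=0$. The formula stays finite on $[0,\infty)$ because $a_1' > a_1$ ensures $q+a_1'$ lies strictly above every zero of $\Ai$, and uniqueness of the Riccati solution (the right-hand side being locally Lipschitz in $\ell$) pins down the identity for all $q \geq 0$. The main obstacle I foresee is a bookkeeping one: if $\mu$ charged the origin, the continuous extension at $y=0$ encoded by the $\indiq_{y=0}$ clause in Lemma \ref{lequi} would seem to introduce a spurious $q\mu(\{0\})$ term. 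This is resolved by reading $\ell(q)$ as $\int \tilde g_q\, d\mu$ where $\tilde g_q$ denotes the continuous extension of $(1-e^{-qy})/y$ to $y=0$ with value $q$; this is the canonical interpretation of the integral in the statement and eliminates the discrepancy without requiring a separate proof that $\mu(\{0\}) = 0$.
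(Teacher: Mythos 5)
Your argument is correct, but the route is genuinely different from the paper's. The paper applies Lemma \ref{lequi} to $\phi(x)=(1-e^{-qx})/x$ --- the integrand of $\ell$ itself --- and, after symmetrizing the double integral, exploits the identity $(x+y)\phi(x+y)-x\phi(x)-y\phi(y)=-xy\phi(x)\phi(y)$ to obtain the Riccati equation (\ref{equadiff}) directly; this requires isolating the possible atom $\beta=\mu(\{0\})$ in several separate terms and checking that the $\beta$-contributions cancel. You instead test against $\phi(x)=1-e^{-qx}$, so the factorization $\phi(x+y)-\phi(x)=e^{-qx}(1-e^{-qy})$ decouples the double integral into the product $2|a_1'|\ell'(q)\ell(q)$; this first yields the second-order identity $|a_1'|(\ell^2)'=1/|a_1'|+\ell''$, which you integrate using $\ell(0)=0$ and $\ell'(0)=1$ to recover (\ref{equadiff}). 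Your choice buys near-trivial bookkeeping --- the $\indiq_{\{y=0\}}$ clause of Lemma \ref{lequi} matches exactly the value $q$ of the continuous extension of $(1-e^{-qy})/y$, so a possible atom at $0$ is absorbed automatically, as you note --- at the modest cost of the extra integration step and of computing $\ell'(0)$. Both proofs then close by Cauchy--Lipschitz uniqueness for the Riccati ODE; your Airy substitution $\ell=-u'/(|a_1'|u)$ is motivation rather than proof, and you rightly fall back on uniqueness. One small inaccuracy: the parenthetical excluding the second Airy solution ``by requiring $\ell$ to remain finite on $[0,\infty)$'' is not the right reason, since $\mathrm{Bi}(\cdot+a_1')$ is also zero-free on $[0,\infty)$ (its largest zero lies below $a_1'$); what kills the $\mathrm{Bi}$-branch is $u'(0)=0$ combined with $\mathrm{Bi}'(a_1')\neq 0$, which follows from the Wronskian $\Ai\,\mathrm{Bi}'-\Ai'\mathrm{Bi}\equiv 1/\pi$ since $\Ai'(a_1')=0$. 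This is harmless, since the uniqueness step is what actually carries the conclusion.
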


\begin{proof} We put $\beta=\mu(\{0\})$, which we cannot exclude to be
nonzero at the moment.
We apply Lemma \ref{lequi} with $\phi(x)=(1-e^{-qx})/x$,
which is indeed in $C^1([0,\infty))$ with $\phi(0)=q$ and $\phi'(0)=-q^2/2$.
This yields
\begin{align*}
2|a_1'|[A_1(q)+A_2(q)+A_3(q)+A_4(q)]=B_1(q)-B_2(q),
\end{align*}
where
\begin{align*}
A_1(q)=&\int_0^\infty\int_0^\infty\frac{\phi(x+y)-\phi(x)}{y}\indiq_{\{x>0,y>0\}}
\mu(dx)\mu(dy),\\
A_2(q)=&\beta \int_0^\infty \frac{\phi(y)-\phi(0)}{y}\indiq_{\{y>0\}}\mu(dy),\\
A_3(q)=&\beta \int_0^\infty \phi'(x) \indiq_{\{x>0\}}\mu(dx),\\
A_4(q)=&\beta^2 \phi'(0),\\
B_1(q)=&\int_0^\infty x\phi(x) \mu(dx),\\
B_2(q)=& \phi(0) \int_0^\infty x\mu(dx). 
\end{align*}
Recalling Lemma \ref{tlm} and that $\phi(0)=q$, 
we see that $B_2(q)=q /|a_1'|$. Next,
\begin{align*}
B_1(q)=\int_0^\infty (1-e^{-qx})\mu(dx) = 1- \ell'(q).
\end{align*}
One can check that $\phi'(0)=-q^2/2$, whence $A_4(q)=-\beta^2q^2/2$.
A computation shows that $[\phi(x)-\phi(0)]/x+\phi'(x)=-q\phi(x)$, so that
\begin{align*}
A_2(q)+A_3(q)=- \beta q \int_0^\infty x^{-1}(1-e^{-qx})\indiq_{\{x>0\}}\mu(dx)
= - \beta q [\ell(q)- \beta q].
\end{align*}
Finally, using a symmetry argument and then that
$(x+y)\phi(x+y)-x\phi(x)-y\phi(y)= - xy\phi(x)\phi(y)$, 
\begin{align*}
A_1(q)=& \int_0^\infty \int_0^\infty \frac{x\phi(x+y)-x\phi(x)}{xy}
\indiq_{\{x>0,y>0\}}\mu(dx)\mu(dy)\\
=& \frac 1 2 \int_0^\infty \int_0^\infty \frac{(x+y)\phi(x+y)-x\phi(x)
-y\phi(y)}{xy} \indiq_{\{x>0,y>0\}} \mu(dx)\mu(dy) \\
=& - \frac 1 2 \left( \int_0^\infty (1-e^{-qx})\indiq_{\{x>0\}}\frac{ \mu(dx)}x 
\right)^2\\
=& - \frac 1 2 \left( \ell(q)-\beta q \right)^2.
\end{align*}
All this shows that
\begin{align*}
2|a_1'|\left[-\frac 1 2 \left( \ell(q)-\beta q \right)^2
- \beta q [\ell(q)- \beta q] -\beta^2q^2/2 \right] = 1- \ell'(q) - q/|a_1'|,
\end{align*}
whence
\begin{align}\label{equadiff}
\ell'(q)=1-q/|a_1'|+|a_1'|\ell^2(q).
\end{align}
This equation, together with the initial condition $\ell(0)=0$,
has a unique maximal solution due to the Cauchy-Lipschitz Theorem.
And one can check that this unique maximal solution is nothing but
\begin{align*}
\ell(q)= \frac{-\Ai'(q+a_1')}{|a_1'| \Ai(q+a_1')},
\end{align*}
which is defined for $q\in (a_1-a_1',\infty)$, because the Airy function does
not vanish on $(a_1,\infty)$. Indeed, it suffices to use that since
$\Ai''(x)=x\Ai(x)$, 
\begin{align*}
\frac{d}{dx}\left(\frac{\Ai'(x)}{\Ai(x)} \right)= x - 
\left(\frac{\Ai'(x)}{\Ai(x)} \right)^2
\end{align*}
and that by definition, $\Ai'(a_1')=0$.
\end{proof}

We now write down two formulae of Darling \cite{d} and Louchard \cite{l} 
that we found in the survey paper
of Janson \cite[p 94]{j}.
Denote by $(\be_t)_{t\in [0,1]}$ the normalized Brownian excursion
and define its area as $\bex=\int_0^1 \be_t dt$. Put, for $y\geq 0$,
\begin{align}\label{psiex}
\psiex(y)=\E[e^{-y \bex}]. 
\end{align}
There hold
\begin{align}\label{ex1}
&\psiex(y) = \sqrt{2\pi} y \sum_{j=1}^\infty \exp\left(-2^{-1/3}|a_j|y^{2/3} \right),
\\
\label{ex2}
&\int_0^\infty (1-e^{-qy}) \frac{\psiex(y^{3/2})}{\sqrt{2\pi y^3}}dy= 
2^{1/3}\left(\frac{\Ai'(0)}{\Ai(0)}-\frac{\Ai'(2^{1/3}q)}{\Ai(2^{1/3}q)}\right).
\end{align}
This allows us to find a link between our probability measure $\mu$
and $\psiex$.

\begin{lem}\label{lastlem}
Consider the probability measure $\mu$ defined in Lemma \ref{tlm}.
Then 
\begin{align*}
\frac{\mu(dx)}{x}= \frac{\psi_{\rm ex}(\sqrt 2 x^{3/2})e^{|a_1'|x} }
{2\sqrt \pi |a_1'| x^{3/2}} \indiq_{\{x>0\}} dx.
\end{align*}
\end{lem}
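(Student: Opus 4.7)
The plan is to match a Laplace-type transform against $\ell(q)$ from Lemma \ref{laplace}, and then invoke uniqueness of the Laplace transform. Specifically, I consider the candidate Radon measure
\[
\nu(dx):=\frac{\psiex(\sqrt 2\, x^{3/2})\,e^{|a_1'|x}}{2\sqrt\pi\,|a_1'|\,x^{3/2}}\,dx \qquad \text{on }(0,\infty),
\]
and set $L(q):=\int_0^\infty(1-e^{-qx})\nu(dx)$. I would first establish $L(q)=\ell(q)$ for all $q\geq 0$. Once this is shown, differentiating gives $\int_0^\infty xe^{-qx}\,\nu(dx)=\ell'(q)=\hat\mu(q)$, where the last equality is obtained as in the proof of Lemma \ref{laplace} by writing $\ell(q)=\mu(\{0\})q+\int_{(0,\infty)}(1-e^{-qx})\mu(dx)/x$ and differentiating. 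Setting $q=0$ then gives $\int x\,\nu(dx)=1$, so both $x\nu(dx)$ and $\mu$ are probability measures on $[0,\infty)$ sharing the same Laplace transform; uniqueness forces $\mu(dx)=x\nu(dx)$, which is the desired formula (and shows in passing that $\mu$ has no atom at $0$).

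The bulk of the work is the evaluation of $L(q)$. I would start with the substitution $y=2^{1/3}x$, chosen precisely so that $\sqrt 2\,x^{3/2}=y^{3/2}$ and the kernel becomes $\psiex(y^{3/2})/\sqrt{2\pi y^3}$, matching the shape appearing in formula (\ref{ex2}). Careful bookkeeping of the Jacobian and the various powers of $2$ yields
\[
L(q)=\frac{2^{-1/3}}{|a_1'|}\int_0^\infty (1-e^{-q2^{-1/3}y})\,e^{|a_1'|2^{-1/3}y}\,\frac{\psiex(y^{3/2})}{\sqrt{2\pi y^3}}\,dy.
\]
The obstructing factor $e^{|a_1'|2^{-1/3}y}$ is handled by the elementary identity
\[
(1-e^{-q2^{-1/3}y})\,e^{|a_1'|2^{-1/3}y}=[1-e^{-2^{-1/3}(q-|a_1'|)y}]-[1-e^{-2^{-1/3}(-|a_1'|)y}],
\]
after which (\ref{ex2}) applies to each piece separately; the $2^{1/3}$ prefactor in (\ref{ex2}) absorbs the $2^{-1/3}$ inside the Airy arguments, and using $-|a_1'|=a_1'$ those arguments collapse cleanly to $q+a_1'$ and $a_1'$.

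The crucial --- and essentially only non-routine --- point is then the cancellation enabled by the defining identity $\Ai'(a_1')=0$: the ratio $\Ai'(a_1')/\Ai(a_1')$ vanishes, so the two $\Ai'(0)/\Ai(0)$ contributions coming from (\ref{ex2}) cancel, leaving exactly $L(q)=-\Ai'(q+a_1')/(|a_1'|\Ai(q+a_1'))=\ell(q)$. Without this cancellation, $L(q)$ and $\ell(q)$ would differ by a linear function of $q$, and the identification would fail. The main obstacle is organizational (keeping the powers of $2^{1/3}$ straight through the substitution and the two applications of (\ref{ex2})); one should also verify that both applications lie in the regime of validity of (\ref{ex2}), but the required condition reduces to $q>a_1-a_1'$, which is harmless since we only need $q\geq 0$.
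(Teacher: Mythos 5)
Your proposal is correct and closely parallels the paper's argument in spirit --- both identify a Laplace-type transform with $\ell$ from Lemma \ref{laplace}, exploit $\Ai'(a_1')=0$, and conclude by injectivity of the Laplace transform --- but they organize the exponential factor $e^{|a_1'|x}$ differently. The paper attaches $e^{-|a_1'|x}$ to the $\mu$ side, writing $\int(1-e^{-qx})e^{-|a_1'|x}\mu(dx)/x=\ell(q+|a_1'|)-\ell(|a_1'|)$ by a telescoping split of the exponentials, and then compares with a single application of (\ref{ex2}) at the nonnegative parameter $2^{-1/3}q$; no analytic continuation is needed at this stage. You instead keep $e^{|a_1'|x}$ with $\psiex$, telescope inside $L(q)$, and apply (\ref{ex2}) twice, the second time at the negative parameter $2^{-1/3}a_1'$. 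This requires the analytic continuation of (\ref{ex2}) to arguments in $(a_1,\infty)$, which you correctly flag and which is legitimate (the paper itself invokes exactly this extension in the proof of Lemma \ref{tl1}, and the integral at the negative parameter converges because $|a_1'|<|a_1|$). Your reduction to $\ell'(q)$ being the Laplace transform of $\mu$ and the resulting conclusion that $\mu$ puts no mass at $0$ are also sound. In short, the paper's version is marginally cleaner at this particular step because it defers the analytic continuation of (\ref{ex2}), but your route reaches the same identity $L(q)=-\Ai'(q+a_1')/(|a_1'|\Ai(q+a_1'))=\ell(q)$ and the proof goes through.
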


\begin{proof} 
Using Lemma \ref{laplace}, we deduce that for all $q\geq 0$,
\begin{align*}
\int_0^\infty (1-e^{-qx})\frac{e^{-|a_1'|x}\mu(dx)}{x}=
&\int_0^\infty (e^{-|a_1'|x}-1)\frac{\mu(dx)}{x} + 
\int_0^\infty (1-e^{-(|a_1'|+q)x})\frac{\mu(dx)}{x}\\
=&-\ell(|a_1'|) + \ell(q+|a_1'|)\\
=&\frac{1}{|a_1'|}\left(\frac{\Ai'(0)}{\Ai(0)}-\frac{\Ai'(q)}{\Ai(q)} \right).
\end{align*}
Next, (\ref{ex2}) implies that for all $q\geq 0$,
\begin{align}\label{for}
\int_0^\infty (1-e^{-qx}) \frac{\psiex(\sqrt 2 x^{3/2})}{2\sqrt{\pi}|a_1'|
x^{3/2}}dx
=& \int_0^\infty (1-e^{-q2^{-1/3}y}) \frac{\psiex(y^{3/2})}
{|a_1'|\sqrt{2\pi y^3}} 2^{-1/3}dy
\\
=&\frac 1 {|a_1'|}\left(\frac{\Ai'(0)}{\Ai(0)}-\frac{\Ai'(q)}{\Ai(q)}\right).\nonumber
\end{align}
We conclude by injectivity of the Laplace transform. 
\end{proof}

We may finally give the

\begin{preuve} {\it of Theorem \ref{mr3}.}
Define $c:(0,\infty)\mapsto(0,\infty)$ by
\begin{align}\label{sss1}
c(x)= \frac{\psiex(\sqrt 2 x^{3/2})e^{|a_1'|x}}{
2\sqrt \pi|a_1'| x^{3/2}}.
\end{align}
By Lemma \ref{lastlem}, the probability measure $\mu$
defined in Lemma \ref{tlm} is nothing but $\mu(dx)=xc(x)dx$. 
We thus have $\int_0^\infty xc(x)dx=1$. 
Recalling Lemma \ref{tlm}, we know that 
$\lim_n \sum_{k\geq 1} \phi(n^{-2/3}k)kc^n_k=\int_0^\infty \phi(x)xc(x)dx$ 
for any $\phi\in C([0,\infty))$
with at most polynomial growth.
Using (\ref{ex1}), we immediately deduce that 
\begin{align}\label{sss2}
c(x)= |a_1'|^{-1} e^{|a_1'|x} \sum_{j=1}^\infty e^{-|a_j|x}.
\end{align}
It is clear from  (\ref{sss2}) that $c \in C^\infty((0,\infty))$ and that
$c(x) \stackrel \infty \sim |a_1'|^{-1} e^{(|a_1'|-|a_1|)x}$: it suffices to use 
that $0<|a_1|<|a_2|<\dots$, that $|a_j|\stackrel {j\to \infty}
\sim (3\pi j/2)^{2/3}$
and Lebesgue's dominated convergence theorem. It is immediate from
(\ref{sss1}) that  $c(x) \stackrel 0 \sim 1/(2\sqrt \pi |a_1'| x^{3/2})$.
Since we now know that $\mu(dx)=c(x)dx$ does not give weight to zero, (\ref{eqlim}) follows from 
Lemma \ref{lequi}.
Finally, (\ref{lala}) follows from Lemma \ref{laplace} when $q\geq 0$. It is easily
extended to $q>a_1-a_1'$ using that $c(x) \stackrel \infty \sim |a_1'|^{-1} e^{(|a_1'|-|a_1|)x}$.
\end{preuve}

\section{Perfect simulation algorithms}\label{proba}

In this section, we provide some perfect simulation algorithms: 
we introduce a pruning procedure $\cP_n$ of trees (for each $n\geq 1$) that will allow us to interpret

\vip

$\bullet$ the particles'  mass distribution $(p^n_k)_{k\geq 1}$ as the law of the number of leaves of
$G_n=\cP_n(G)$, where $G$ is a binary critical Galton-Watson tree;

\vip

$\bullet$ the size-biased particles' mass distribution 
$(kc^n_k)_{k\geq 1}$ as the law of the number of leaves of $\hG_n=\cP_n(\hG)$, where $\hG$ is a
size-biased binary critical Galton-Watson tree.

\vip

We also show that $G_n$ obviously tends to $G$  as $n\to\infty$, which gives a probabilistic interpretation 
of Theorem \ref{mr2}.

\subsection{The pruning procedure}\label{secpn}

Consider a rooted discrete binary tree $T$, 
that is a set of vertices $V(T)$
and of edges $E(T)$, satisfying the usual properties of binary trees. 
The root is denoted by $\emptyset$ and we add a vertex $\star$ and an edge joining
$\star$ to $\emptyset$. 
We denote by $L(T)$ the set of the leaves of $T$. We take the convention that
$\star$ is not a leave, but $\emptyset$ may be a leave (iff $V(T)$ is reduced to $\{\star,\emptyset\}$).
We denote by $N(T)$ the set of internal vertices (nodes) of $T$, that is
$N(T)=V(T)\setminus (\{\star\}\cup L(T))$.

\vip

We recall that for any binary tree $T$, it holds that $|L(T)|=|N(T)|+1$.

\begin{defin}\label{pn}
Let $T$ be a rooted discrete binary tree with at most one infinite branch and let $n\geq 1$.
We define the (random) subtree $\cP_n(T)$ as follows.

\vip

{\it Step 1.}  Endow each edge $e$ with the length 
$\kappa_e$, where $(\kappa_e)_{e\in E(T)}$ is an i.i.d. family of $\expo(2)$-distributed random variables.
This induces a distance $d$ on $T$. For each $t\geq 0$, define $T_t=\{x\in T : d(x,\star)=t\}$
and let $t_0=\inf\{t>0 : T_t=\emptyset\}\leq \infty$ be the height of $T$.

\vip

{\it Step 2.} For each internal node $x\in N(T)$, consider the branch $B_T(x)$ (endowed with the lengths
introduced at Step 1) joining $x$ to $\star$ and consider a Poisson point process $\pi_x^n$ with
rate $1/n$ on $B_T(x)$ (conditionally on  $(\kappa_e)_{e\in E(T)}$, 
these Poisson processes are taken mutually independent).
All the marks of these Poisson process are activated.

\vip

{\it Step 3.} We explore the tree from the top until we arrive at $\star$ 
(that is, we consider $T_t$ for $t$ decreasing from $t_0$ to $0$) 
with the following rule: each time we encounter an active mark of a
Poisson point process (defined in Step 2), 
we remove all the subtree above the mark (and replace it by a leave) 
and we deactivate all the marks of the Poisson point processes corresponding 
to nodes in this subtree. See Figure \ref{fig1} for an illustration.

\vip

{\it Step 4.} We call $\cP_n(T)$ the resulting tree, which is a subtree of $T$ containing
$\star$.

\vip

A mark of a Poisson process is said to be {\rm useful} if has generated a pruning at some
time in the procedure and {\rm useless} otherwise.
\end{defin}

\begin{figure}[hb]
\fbox{
\begin{minipage}[c]{0.95\textwidth}
\centering
\resizebox{0.47\linewidth}{!}{\input{Prune1.pstex_t}} 
\resizebox{0.47\linewidth}{!}{\input{Prune2.pstex_t}} \\
\resizebox{0.47\linewidth}{!}{\input{Prune3.pstex_t}} 
\resizebox{0.47\linewidth}{!}{\input{Prune4.pstex_t}} 
\caption{The pruning procedure $\cP_n$}
\label{fig1}
\vip
\parbox{12cm}{
\footnotesize{
On Figure A, the tree $T$ is drawn, its edges being endowed with i.i.d. $\expo(2)$-distributed random
variables. The marks of the Poisson processes are represented as follows: here $\pi_x^n$ has one mark (on
the edge just under $x$), $\pi_y^n$ has one mark (on the edge above $\emptyset$ on the left),
$\pi_z^n$ has three marks (one on the edge just under $z$, one on the  
edge above $\emptyset$ on the right, one on the edge $(\star,\emptyset)$) and
$\pi_t^n$ has one mark on the edge just under $t$. All the other Poisson point processes have no mark.

Thus starting from the top of the tree, we first encounter the mark just under $z$. On Figure
B, we have drawn the resulting tree: we have replaced the subtree above this mark by a leave
and we have erased (deactivated) 
the marks {\it sent} by nodes of this subtree (here, the two other marks sent by $z$).

Then we encounter the mark sent by $t$ and the resulting tree is drawn on Figure C. Finally,
we encounter the mark sent by $x$, which makes inactive the mark sent by $y$ and the resulting
tree is drawn on figure D. Since there are no marks any more, the tree of figure D is 
$\cP_n(T)$.
}}
\end{minipage}
}
\end{figure}

If $T$ is infinite, then Step 3 looks ill-posed at first glance, since the top of the tree
lies at infinity.

\begin{rem}\label{mqm}
Let $T$ be a tree with one infinite branch. Then Definition \ref{pn} makes sense.
Indeed, consider the first (starting from $\star$) edge $e=(x_1,x_2)$ on this infinite branch, 
with $x_2$ child of $x_1$, such that $\pi_{x_2}^n$ has a mark in $e$. 
This happens for each edge independently with positive probability (not depending on $e$), 
so that such an edge a.s. exists.
Then $e$ will be cut (either by $\pi_{x_2}^n$ or by a Poisson process corresponding to a node above $x_2$)
and this will make inactive all the Poisson processes corresponding to nodes above $x_2$.
Thus everything will happen as if $x_2$ was a leave of $T$, neglecting all the marks
due to Poisson processes corresponding to nodes above $x_2$. See Figure \ref{fig2} for an illustration.
\end{rem}

\begin{figure}[hb]
\fbox{
\begin{minipage}[c]{0.95\textwidth}
\centering
\resizebox{0.47\linewidth}{!}{\input{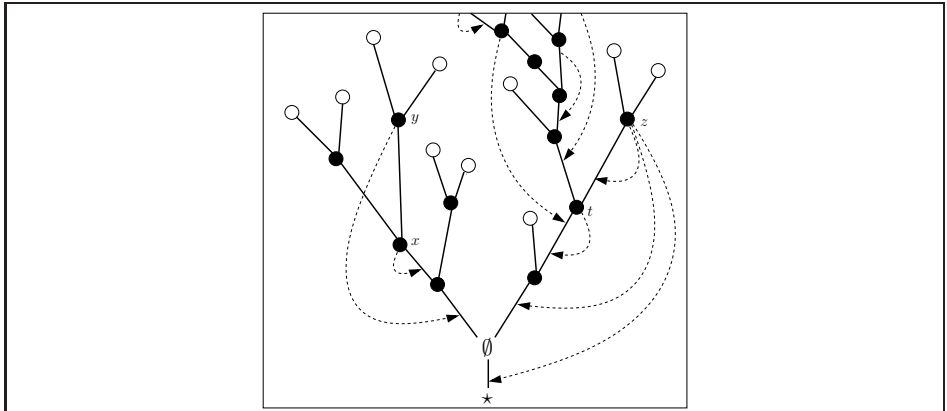}} 
\caption{Applying $\cP_n$ to a tree with one infinite branch}
\label{fig2}
\vip
\parbox{12cm}{
\footnotesize{
The first edge $e=(x_1,x_2)$ on the infinite branch 
such that $\pi^n_{x_2}$ has a mark on $e$ is here the edge under $t$ and $x_2=t$.
Then independently of what can happen above $t$, this edge will be cut and 
all the marks {\it sent} by nodes above $t$ will be erased. Indeed, there 
are two possibilities, calling 
$M$ the mark sent by $t$. 

Case 1: This mark $M$ is useful and thus $M$ is replaced by a leave and
all the marks sent between $\star$ and $M$ by nodes above $t$ are erased.

Case 2. This mark $M$ is useless and then it is necessarily deactivated by a useful mark
lying between $M$ and $t$ (sent by a node above $t$). We conclude as in case 1.

In any case, it is not necessary to know what happens above $t$ to conclude that, 
with this configuration, $\cP_n(T)$ will be the same as in Figure \ref{fig1}-D.
}}
\end{minipage}
}
\end{figure}

\subsection{Particles' mass distribution}

We can now give an interpretation in terms of trees of the particles' mass distribution.

\begin{prop}\label{treesOK}
Consider a binary critical Galton-Watson tree $G$ (BCGWT in short), that is a Galton-Watson tree
with offspring distribution $(\delta_0+\delta_2)/2$. 
Fix $n\geq 1$ and let $G_n=\cP_n(G)$, see Definition \ref{pn} 
(all the random objects used by $\cP_n$ are taken conditionally on $G$).

(i) For all $k\geq 1$,
$\Pr[|L(G)|=k]=p_k$, where $p_k$ was defined in Theorem \ref{mr2}.

(ii) For all $k\geq 1$, $\Pr[|L(G_n)|=k]=p^n_k$, where
$p^n_k$ was defined in Theorem \ref{mr2}.
\end{prop}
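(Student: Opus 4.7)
For part (i), I use the standard branching decomposition of the BCGWT $G$: with probability $1/2$ the root $\emptyset$ is a leaf, giving $|L(G)|=1$; otherwise $G$ splits at the root into two i.i.d.\ BCGWTs $G_1, G_2$ and $|L(G)| = |L(G_1)| + |L(G_2)|$. Writing $r_k := \Pr[|L(G)|=k]$, this yields $r_k = \tfrac{1}{2}\sum_{i=1}^{k-1}r_i r_{k-i}$ for $k\geq 2$, together with $\sum_k r_k = 1$ since critical GW trees are a.s.\ finite. By the uniqueness statement in Theorem~\ref{mr2}, $(r_k)=(p_k)$.

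For part (ii), the plan is to derive the analogous recursion for $q_k := \Pr[|L(G_n)|=k]$ by decomposing the pruning at the root edge $e_0=(\star,\emptyset)$, and then to identify $q_k = \alpha_k^n q_n^k = p_k^n$ via Lemmas~\ref{equi} and~\ref{exiq}. First, $G_n\subset G$ is a.s.\ finite. If $\emptyset$ is a leaf of $G$ (probability $1/2$) no Poisson process is attached and $|L(G_n)|=1$. Otherwise let $T_1,T_2$ be the two i.i.d.\ BCGWT subtrees above $\emptyset$, with roots $\emptyset_1,\emptyset_2$. The key structural point is that for every $v\in N(T_i)$ the branch $B_G(v)$ is contained in $T_i\cup\{(\emptyset,\emptyset_i),e_0\}$, so a mark of $\pi_v^n$ lying strictly above $\emptyset$ can only deactivate (via Step~3 of Definition~\ref{pn}) Poisson processes attached to other $T_i$-nodes. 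The global top-down pruning therefore factorizes into three non-interacting phases: prune $T_1$ using the $\pi_v^n$'s from $T_1$-nodes restricted above $\emptyset$, prune $T_2$ analogously, then process the remaining marks on $e_0$. Since $T_i\stackrel{d}{=}G$ and the restrictions above $\emptyset$ of the $\pi_v^n$'s are independent PPPs of rate $1/n$ on the corresponding branches of $T_i$ (by the restriction property of Poisson processes), the outputs $\tilde T_1, \tilde T_2$ of the first two phases are i.i.d.\ copies of $G_n$.

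After these two phases, the marks still active on $e_0$ come from $\pi_\emptyset^n$ and from $\pi_v^n$ for $v\in N(\tilde T_1)\cup N(\tilde T_2)$, so conditionally on $(\tilde T_1,\tilde T_2)$ they form a Poisson process on $e_0$ of intensity $M/n$ with $M=1+|N(\tilde T_1)|+|N(\tilde T_2)|=|L(\tilde T_1)|+|L(\tilde T_2)|-1$. Integrating out $\kappa_{e_0}\sim\expo(2)$ gives $\Pr[\text{no mark on }e_0\mid M]=2n/(2n+M)$; if a mark is present the top one cuts above and $|L(G_n)|=1$, otherwise $|L(G_n)|=|L(\tilde T_1)|+|L(\tilde T_2)|$. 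This yields for $k\geq 2$
\[
q_k \;=\; \tfrac{1}{2}\cdot \tfrac{2n}{2n+k-1}\sum_{i=1}^{k-1} q_i q_{k-i}, \qquad\text{i.e.,}\qquad \bigl(2+(k-1)/n\bigr)\,q_k \;=\; \sum_{i=1}^{k-1} q_i q_{k-i},
\]
which is exactly (\ref{eq2}) divided by $m_0(c^n)$. A direct induction using the definition (\ref{dfalpha}) of the $\alpha_k^n$ yields $q_k=\alpha_k^n q_1^k$ for all $k\geq 1$, and the normalization $\sum_k q_k=1$ (from a.s.\ finiteness of $G_n$) reads $f_n(q_1)=1$, forcing $q_1=q_n$ by Lemma~\ref{exiq}. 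Combined with the identity $p_k^n=\alpha_k^n q_n^k$ coming from Lemma~\ref{equi} (via $c_k^n=\alpha_k^n q_n^{k-1}/f_n'(q_n)$ and $m_0(c^n)=1/(q_n f_n'(q_n))$), this gives $q_k=p_k^n$. The main difficulty is the three-phase decomposition: one must justify rigorously that the globally sequential top-down rule of Step~3 commutes with the factorization, which relies on the fact that $T_1$- and $T_2$-marks have disjoint supports above $\emptyset$ and on the Poisson restriction property.
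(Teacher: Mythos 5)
Your proof is correct and follows essentially the same route as the paper's: decompose at the root edge $(\star,\emptyset)$, show the pruned subtrees above $\emptyset$ are i.i.d.\ copies of $G_n$, compute $\Pr[\text{no mark on }e_0\mid M]=2n/(2n+M)$ by integrating out $\kappa_{e_0}\sim\expo(2)$, derive $(2+(k-1)/n)r^n_k=\sum_{i=1}^{k-1}r^n_ir^n_{k-i}$, and identify $r^n_k=\alpha^n_k q_n^k=p^n_k$ via Lemmas~\ref{equi} and~\ref{exiq} together with the a.s.\ finiteness of $G_n$. The only place where you go beyond the paper is in spelling out why the top-down exploration factorizes across $T_1$, $T_2$ and $e_0$ (disjoint supports of the relevant mark families above $\emptyset$, restriction property of Poisson processes, and $e_0$ being processed last since it sits lowest) -- the paper simply asserts this as immediate from the branching property, so your added detail is a useful clarification rather than a different argument.
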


Clearly, $G_n=\cP_n(G)$ can be perfectly simulated. We thus have a perfect simulation algorithm
for $(p^n_k)_{k\geq 1}$. 

\vip

{\it Heuristic proof.} 
Any particle with mass $k$ can be seen
a cluster of $k$ particles with mass $1$. And it is natural to use a
tree to represent the {\it genealogy} of this particle; if this
particle has a mass $k$, then this tree will have $k$ leaves.

To be more precise, we need first to handle some computations. 
Divide
(\ref{wf1}) by $m_0(c^n)$, use that $\sum_{k,l\geq 1} \phi(l)p^n_k p^n_l=\sum_{k\geq 1} \phi(k)p^n_k$
and that, recalling (\ref{ettac2}), 
$$
\frac1n\sum_{k\geq 2}(k-1)^2 \phi(1)p^n_k=\phi(1)\frac{m_2(c^n )+m_0(c^n)-2m_1(c^n)}{nm_0(c^n)}
=\phi(1)= \sum_{k\geq 1} \phi(1)p^n_k.
$$ 
One gets, for all reasonable $\phi$,
$$
\sum_{k,l\geq 1} [\phi(k+l)-\phi(k)]p^n_k p^n_l + \sum_{k\geq 2} [\phi(1)-\phi(k)] [1+(k-1)/n]p^n_k=0.
$$
Thus $(p^n_k)_{k\geq 1}$ can be seen as the equilibrium of the mass of a particle with
the following dynamics: (i) it merges with an independent similar particle at rate $1$,
(ii) its mass is reset to $1$ at rate $1$, (iii) 
its mass is reset to $1$ at rate $(k-1)/n$, where $k$ is its mass.
 
Consider now such a particle at equilibrium. 
First neglect (iii) and follow the history of the particle backward
in time: it has merged with a similar particle at rate $1$
and it has been reset to $1$ at rate $1$.
Thus this particle is subjected to events at rate $2$ and each time
an event occurs, it is a coalescence (node) with probability $1/2$ and a breakage 
(leave) with probability $1/2$.
This can be represented by a BCGWT, of which the edges have a length
with law $\expo(2)$. Now we take (iii) into account: we start from the past (i.e. from the top of the tree),
we follow the branches of the tree and reset the particle to $1$ at rate $(k-1)/n$,
where $k$ is the mass of the particle, i.e. the number of leaves of the subtree above the point
under consideration. Using finally that the number of nodes
of a binary tree is precisely its number of leaves minus $1$, we guess that $\cP_n(G)$ should indeed 
provide a perfect simulation algorithm for the genealogy of a particle at equilibrium.

\vip

We now handle a rigorous proof.

\begin{proof}
We start with (i), which is completely standard. Define $q_k=\Pr[|L(G)|=k]$ for $k\geq 1$ and use the 
{\it branching property}: knowing that $G$ is not reduced to the root, it can be written
as $G=\{\emptyset\}\cup G'\cup G''$, for two independent copies $G',G''$ of $G$.
Hence conditionally on $\{|L(G)| \geq 2\}$,
$L(G)=L(G')\cup L(G'')$, whence, for $k \geq 2$, 
$\{|L(G)|=k\}=\{|L(G)|\geq 2\}\cap \cup_{i=1}^{k-1}\{|L(G')|=i,|L(G'')|=k-i\}$
and thus $q_k = \frac 12 \sum_{i=1}^{k-1} q_iq_{k-i}$. 
Next, it obviously holds that
$q_1=1/2$. Since $p_1=q_1$ and recalling (\ref{limsimple}), it follows that $q_k = p_k$ for all $k\geq 1$.

\vip

We now check (ii). Using the branching property of $G$ recalled above,
it immediately follows from the pruning procedure $\cP_n$ that for $k\geq 2$,
$$
\{|L(G_n)|=k\} = \{|L(G)|\geq 2\} \cap \bigcup_{i=1}^{k-1}\{|L(G_n')|=i,
|L(G_n'')|=k-i,A_n\},
$$ 
where $A_n$ is the event that no pruning occurs in the edge $(\star,\emptyset)$
and where $G_n'$ and $G_n''$ are two independent copies of $G_n$, independent
of $(\{L(G)\geq 2\},(\pi_x^n\vert_{(\star,\emptyset)})_{x \in N(G)})$. 
First observe that conditionally on $(G'_n,G''_n)$,
the event $A_n$ occurs if for all $x\in N(G'_n)\cup N(G''_n)\cup \{\emptyset\}$,
$\pi^n_x$ has no mark in $(\star,\emptyset)$. Recalling that the length of 
this edge is $\expo(2)$-distributed, 
that the Poisson processes $\pi^n_x$
have rate $1/n$ and that
here we have $|N(G'_n)\cup N(G''_n)\cup \{\emptyset\}|=|N(G'_n)|+|N(G''_n)|+1=
|L(G'_n)|+|L(G''_n)|-1$ Poisson processes, one easily deduces that 
$$
\Pr[A_n\;\vert\; G_n',G_n'']
=\frac{2}{(|L(G'_n)|+|L(G''_n)|-1)/n+2}.
$$
Put now $r^n_k=\Pr[|L(G_n)|=k]$. From the previous study, we get, for $k\geq 2$,
$$
r^n_k = \frac 1 2 \sum_{i=1}^{k-1} r^n_i r^n_{k-i} \frac{2}{(i+k-i-1)/n+2},
$$
whence $[2+(k-1)/n]r^n_k=\sum_{i=1}^{k-1} r^n_i r^n_{k-i}$.
Using the arguments and notation of the proof of Lemma \ref{equi}-Step 1
(with $r=1$), 
we deduce that $r^n_k=\alpha^n_k (r_1^n)^k$ for all $k\geq 1$. 
But we also know, since $G_n\subset G$ is a.s. finite, that
$\sum_{k\geq 1} \alpha^n_k (r_1^n)^k= \sum_{k\geq 1} r^n_k=1$. Recalling
(\ref{generaf}) and Lemma \ref{exiq}, we conclude that 
$r^n_1=q_n$, whence $r^n_k=\alpha^n_k (q_n)^k$ for all $k\geq 1$.
By Lemma \ref{equi}, it also holds that $p^n_k=c^n_k/m_0(c^n)=\alpha^n_k (q_n)^k$,
which concludes the proof.
\end{proof}

\subsection{Probabilistic interpretation of Theorem \ref{mr2}}
This is not hard from Proposition \ref{treesOK}. 
Clearly, as $n\to \infty$, the probability that $G=G_n$ tends
to $1$, implying that the law of $|L(G_n)|$, i.e. $(p^n_k)_{k\geq 1}$, tends to the law of
$|L(G)|$, i.e.  $(p_k)_{k\geq 1}$.

Indeed, we have $G_n=G$ as soon as for all $x \in N(G)$, 
$\pi^n_x$ has no mark on the branch $B_G(x)$ joining $\star$ to $x$ in $G$. 
Conditionally on $(G,(\kappa_e)_{e\in N(G)})$, this occurs with probability 
$\prod_{x \in N(G)} \exp\left(-\frac1n\sum_{e\in E(G), e\subset B_G(x)} \kappa_e\right)$, which a.s. tends to $1$,
since $G$ is a.s. a finite tree.

\subsection{Size-biased particles' mass distribution}

We now interpret the size-biased particles' mass distribution in 
terms of a pruned size-biased Galton-Watson.

\begin{defin}\label{fgw}
Consider a family of i.i.d. binary critical Galton-Watson trees $(G(i))_{i\geq 1}$.
We call {\rm size-biased binary critical Galton-Watson tree} (SBBCGWT in short) 
the binary tree $\hG$ with one infinite branch (called the backbone)
as in Figure \ref{fig3}, where for each $i\geq 1$, we plant $G(i)$ on $\star i$.
\end{defin}

\begin{figure}[hb]
\fbox{
\begin{minipage}[c]{0.95\textwidth}
\centering
\resizebox{0.66\linewidth}{!}{\input{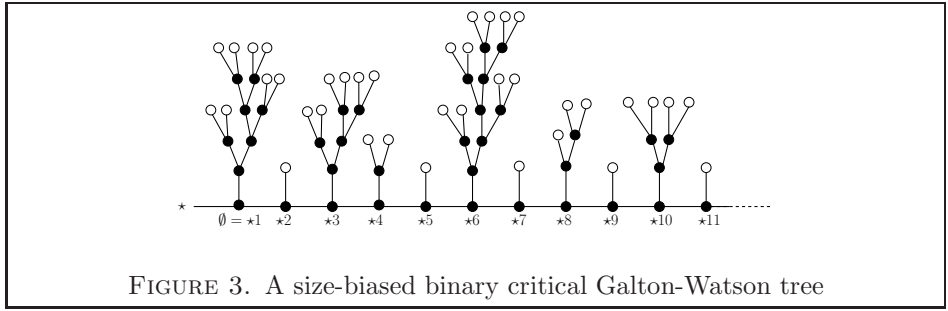}} 
\caption{A size-biased binary critical Galton-Watson tree}
\label{fig3}
\end{minipage}
}
\end{figure}

Traditionally, each $G(i)$ is planted above or under the backbone at random, but this
is absolutely useless for our purpose, since we never take into account any order on the vertices.
See Lyons-Pemantle-Peres \cite[Section 2]{lpp} for some indications about the terminology
{\it size-biased}.

\begin{prop}\label{treesOK2}
Consider a SBBCGWT $\hG$ as in Definition 
\ref{fgw} and fix $n\geq 1$. Let $\hG_n=\cP_n(\hG)$, recall Definition \ref{pn} 
(all the the random objects used by $\cP_n$ are taken conditionally on $\hG$). Then for all $k\geq 1$,
$\Pr(|L(\hG_n)|=k)=kc^n_k$, where $(c^n_k)_{k\geq 1}$ was defined in Theorem
\ref{mr1}.
\end{prop}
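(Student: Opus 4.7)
My plan is to mimic the proof of Proposition \ref{treesOK} by decomposing $\hG$ at its root. By Definition \ref{fgw}, $\emptyset=\star 1$ has two children in $\hG$: $\star 2$, root of a subtree $\hG'$ which is itself an SBBCGWT (with the same law as $\hG$, since it has an infinite backbone with independent BCGWTs hanging off), and the root of $G(1)$, an independent BCGWT. By Remark \ref{mqm}, $\hG_n:=\cP_n(\hG)$ is almost surely finite, so in particular $\sum_{k\geq 1}\Pr(|L(\hG_n)|=k)=1$.

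I would then argue, following the thinning argument of the proof of Proposition \ref{treesOK}, that once the marks on the stem $(\star,\emptyset)$ are set aside, the prunings of $\hG'$ and of $G(1)$ inside $\hG$ coincide with independent standalone prunings $\hG'_n=\cP_n(\hG')$ and $G_n(1)=\cP_n(G(1))$. Let $A_n$ denote the event that $(\star,\emptyset)$ is not cut. Conditionally on $(\hG'_n,G_n(1))$, the event $A_n$ occurs iff no mark on $(\star,\emptyset)$ emanates from any node in $\{\emptyset\}\cup N(\hG'_n)\cup N(G_n(1))$. Since $|N(T)|=|L(T)|-1$ for any finite binary tree $T$ and the length of $(\star,\emptyset)$ is $\expo(2)$, integrating over this length yields
\[
\Pr(A_n\mid \hG'_n,G_n(1))=\frac{2n}{2n+|L(\hG'_n)|+|L(G_n(1))|-1}.
\]
On $A_n$, $|L(\hG_n)|=|L(\hG'_n)|+|L(G_n(1))|\geq 2$; on $A_n^c$, $\emptyset$ is the unique leaf and $|L(\hG_n)|=1$. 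Combining this with Proposition \ref{treesOK} (giving $\Pr(|L(G_n(1))|=i)=p^n_i$) and writing $\hr^n_k:=\Pr(|L(\hG_n)|=k)$ (noting that $\hG'_n$ has the same law as $\hG_n$), I obtain for all $k\geq 2$:
\[
\left(2+\frac{k-1}{n}\right)\hr^n_k=2\sum_{i=1}^{k-1}p^n_i\,\hr^n_{k-i}.
\]

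It remains to check that $s^n_k:=kc^n_k$ satisfies the same recursion. Plugging in $p^n_i=c^n_i/m_0(c^n)$, the RHS becomes $\frac{2}{m_0(c^n)}\sum_{i=1}^{k-1}(k-i)c^n_ic^n_{k-i}$, which by the symmetry $i\leftrightarrow k-i$ equals $\frac{k}{m_0(c^n)}\sum_{i=1}^{k-1}c^n_ic^n_{k-i}=k(2+(k-1)/n)c^n_k$ by the equilibrium equation (\ref{eq2})---matching the LHS. Moreover $\sum_{k\geq 1}s^n_k=m_1(c^n)=1$. For uniqueness, note that given any value of $x_1>0$ the recurrence determines $x_k$ for $k\geq 2$ as a homogeneous linear function of $x_1$ (say $x_k=a_kx_1$ with $a_1=1$), so the normalization $\sum_{k\geq 1}x_k=1$ pins down $x_1$ uniquely. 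Since both $(\hr^n_k)$ and $(s^n_k)$ satisfy the recurrence with positive first term and sum $1$, they must coincide, giving $\Pr(|L(\hG_n)|=k)=kc^n_k$.

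The main obstacle is the decomposition in the second paragraph: one must carefully split the Poisson processes so that the prunings of $\hG'$ and $G(1)$ within $\hG$ are genuinely independent copies of standalone prunings, while still correctly accounting for which marks on $(\star,\emptyset)$ survive after the pruning above $\emptyset$ has been resolved. This mirrors the thinning step in the proof of Proposition \ref{treesOK}, and the infiniteness of $\hG$ poses no real issue thanks to Remark \ref{mqm}, which guarantees that $\hG'_n$ and hence the quantities appearing in the formula for $\Pr(A_n)$ are almost surely finite.
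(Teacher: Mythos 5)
Your proof is correct and follows essentially the same route as the paper's: decompose $\hG$ at the root edge $(\star,\star 1)$ into the pruned $G(1)$ and $\hG'$, derive the recursion $[(k-1)/n+2]\hr^n_k=2\sum_{i=1}^{k-1}p^n_i\hr^n_{k-i}$ from the formula for $\Pr(A_n\mid\cdot)$, verify via symmetry and (\ref{eq2}) that $(kc^n_k)$ satisfies the same system, and conclude by uniqueness. The only (harmless) divergence is in the uniqueness step, where you exploit the homogeneous linearity of the recurrence in $x_1$ while the paper instead shows $x\mapsto u^n_k(x)$ is increasing; both pin down the normalizing first term in one line.
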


Since $\cP_n(\hG)$ can be perfectly simulated due to Remark \ref{mqm},
this provides a perfect simulation algorithm
for $(kc^n_k)_{k\geq 1}$.
It seems striking that $\cP_n(\hG)$ is a size-biased
version of $\cP_n(G)$, with $\hG$ a size-biased version of $G$.
However, the two notions of {\it size-biased} are quite different:
$\cP_n(\hG)$ is a version of $\cP_n(G)$ {\it biased by the number of leaves},
while $\hG$ is a version of $G$ {\it biased by the size of the population at generation} 
$n$ (with $n\to \infty$).

\vip

{\it Heuristic proof.} 
First rewrite (\ref{wf2}) as 
$$
\sum_{k,l\geq 1} [\psi(k+l)-\psi(k)]kc^n_k 2p^n_l + \sum_{k\geq 2} [\psi(1)-\psi(k)] \frac{k-1}n kc^n_k=0.
$$
Thus $(kc^n_k)_{k\geq 1}$ can be seen as the equilibrium of the mass of a particle with
the following dynamics: (i) it merges with an independent particle with law $(p^n_k)_{k\geq 1}$ at rate $2$,
(ii) its mass is reset to $1$ at rate $(k-1)/n$, where $k$ is its mass.
 
Consider such a particle at equilibrium and first neglect (ii). Then obviously, we can 
represent its genealogy as a forest of (non size-biased) particles $\cP_n(G(i))$,
the length of the edges one the backbone (those between these particles) being $\expo(2)$-distributed.
Then, take (ii) into account: start from the past and prune the edges on the backbone
at rate $(k-1)/n$, where $k$ is the mass of the particle, i.e. the number of leaves of the subtree
above the point under consideration, whence $k-1$ is the corresponding number of nodes.

What we really do is slightly different, since we prune the (non size-biased) particles 
and the backbone simultaneously, but one can easily get convinced that this changes nothing.

\vip

We now give some rigorous arguments.

\begin{proof}
Consider the problem with unknown $(t^n_k)_{k\geq 1}$ (here $(p^n_k)_{k\geq 1}$ is given)
\begin{equation}\label{tasoeur}
\sum_{k\geq 1} t^n_k=1, \quad [(k-1)/n+2]t^n_k=2\sum_{i=1}^{k-1}p^n_i t^n_{k-i} 
\quad (k\geq 2).
\end{equation}

{\it Step 1.} Define $s^n_k=\Pr(|L(\hG_n)|=k)$ for $k\geq 1$. We show here
that $(s^n_k)_{k\geq 1}$ is a solution to (\ref{tasoeur}).
First recall from Remark \ref{mqm} that $\hG_n$ is a.s. finite, 
whence $\sum_{k\geq 1} s^n_k=1$.  
We introduce $G(1)$ the BCGWT planted on $\star 1$ and
$\hG'$ the SBBCGWT on the right of $\star 1$
(see Figure \ref{fig3}). Clearly, for $k\geq 2$, we can write
$$
\{|L(\hG_n)|=k\}=\cup_{i=1}^k \{|L(G_n(1))|=i,|L(\hG_n')|=k-i\}\cap A_n,
$$
where $A_n$ is the event that there is no pruning in the edge $(\star,\star 1)$,
where $G_n(1)=\cP_n(G(1))$ and $\hG'_n=\cP_n(\hG')$. Note that $G(1)$ and $\hG'$ 
are independent and pruned independently, so that $G_n(1)$ and $\hG'_n$ are independent. 
We know by Proposition \ref{treesOK} that
$|L(G_n(1))|$ is $(p^n_k)_{k\geq 1}$-distributed. Furthermore, it is clear
that $|L(\hG_n')|$ has the same law as $|L(\hG_n)|$. Finally, exactly as 
in the proof of Proposition \ref{treesOK}, one may check that
$$
\Pr[A_n\;\vert\; G_n(1),\hG_n']
=\frac{2}{(|L(G_n(1))|+|L(\hG_n')|-1)/n+2}.
$$
As a conclusion, there holds
$$
s^n_k=\sum_{i=1}^{k-1} p^n_i s^n_{k-i} \frac{2}{(i+k-i-1)/n+2},
$$
whence $[(k-1)/n+2]s^n_k=2\sum_{i=1}^{k-1} p^n_i s^n_{k-i}$ as desired.
\vip

{\it Step 2.} Next, we show that $(kc^n_k)_{k\geq 1}$ also solves 
(\ref{tasoeur}). Recall that $\sum_{k\geq 1} k c^n_k=1$ due to 
(\ref{eq0}). For $k\geq 2$, using (\ref{eq2}),
\begin{align*}
[(k-1)/n+2]kc^n_k=&\sum_{i=1}^{k-1} \frac{(i+(k-i))c^n_i c^n_{k-i}}{m_0(c^n)}
=2\sum_{i=1}^{k-1} \frac{(k-i)c^n_i c^n_{k-i}}{m_0(c^n)},
\end{align*}
so that $[(k-1)/n+2]kc^n_k=2\sum_{i=1}^{k-1}p^n_i (k-i)c^n_{k-i}$.

\vip

{\it Step 3.} To conclude the proof, it remains to prove that 
(\ref{tasoeur}) has at most one solution.
For each $x>0$, there is obviously 
a unique solution $(u^n_k(x))_{k\geq 1}$ to 
$$
u^n_1(x)=x, \quad [(k-1)/n+2]u^n_k(x)=2\sum_{i=1}^{k-1}p^n_i u^n_{k-i}(x) \quad (k\geq 2).
$$
One immediately checks recursively that for each $k\geq 1$, 
$x\mapsto u^n_k(x)$ is increasing. Hence, there is at most one value $x_n>0$
such that $\sum_{k\geq 1} u^n_k(x_n)=1$. But any solution $(t^n_k)_{k\geq 1}$ 
to (\ref{tasoeur}) has to satisfy $(t^n_k)_{k\geq 1}=(u^n_k(t^n_1))_{k\geq 1}$
and thus
must be equal to $(u^n_k(x_n))_{k\geq 1}$.
\end{proof}

\section{A scaling limit for the size-biased typical particle}\label{proba2}

The aim of this section is to understand why the Brownian excursion arises in Theorem
\ref{mr3}.
We will build a random real tree $H$ and a pruning procedure $\cP_\infty$ such
that the {\it measure of leaves} of $H_\infty=\cP_\infty(H)$ follows the law $x c(x)dx$, where 
$c$ is the profile introduced in Theorem \ref{mr3}. We will also explain heuristically why $H_\infty$ 
should be the limit of $n^{-1/3}\hG_n$, where $\hG_n$ is the pruned SBBCGWT as in Proposition \ref{treesOK2},
thus providing an interpretation of Theorem \ref{mr3}. A rigorous proof of this scaling limit
could probably be handled, but this would be quite tedious and not very interesting.
For precisions about continuum random trees, see Aldous \cite{a1,a2,a3}, 
Duquesne-Le Gall \cite{dlg}, Le Gall \cite{lg1,lg2}.
Let us mention that some pruning procedures of continuum random trees have been introduced and studied
by Abraham-Delmas-Voisin \cite{adv}, but it is quite different from ours.

\vip

We proceed as follows: in Subsections \ref{ss1}, \ref{ss2} and \ref{ss3}, we recall basic facts about
real trees, Aldous's continuum random tree and Aldous's self-similar continuum random tree.
The pruning procedure is explained in Subsection \ref{ss4}, and we indicate heuristically
why the rescaled pruned SBBCGWT should converge to the pruned self-similar continuum random tree
in Subsection \ref{ss5}. Subsection \ref{ss6} is devoted to the proof that the number of leaves
of the pruned self-similar continuum random tree is $xc(x)dx$-distributed, where $c$ was defined in Theorem 
\ref{mr3}. 
In Subsection \ref{ss7}, we briefly indicate how the pruned self-similar continuum random tree
can be built in terms of contour functions. Finally, we discuss some remarkable property
of a related diffusion process in Subsection \ref{ss8}.

\subsection{Real trees}\label{ss1}

First recall the following definition, see e.g. Le Gall \cite{lg2}.

\begin{defin}\label{realtree}
1. A measured rooted real tree $\cT=(T,d,\mu)$ is a metric space $(T,d)$, with a distinguished 
element $\rho$ called root, endowed with a nonnegative measure $\mu$, 
satisfying the following properties.

(i) For all $x,y\in T$, there is a unique isometry $\phi_{x,y}:[0,d(x,y)]\mapsto T$ such that
$\phi_{x,y}(0)=x$ and $\phi_{x,y}(d(x,y))=y$.

(ii) For all  $x,y\in T$, all continuous injective map $h:[0,1]\mapsto T$ 
with $h(0)=x$ and $h(1)=y$, there holds
$h([0,1])=\phi_{x,y}([0,d(x,y)])$.

2. We denote by $\tT$ the set of all measured rooted real trees.

3. For $\cT=(T,d,\mu)\in \tT$, we denote by $L(T)=\{x\in T : T\setminus\{x\}$ is connected$\}$ 
the set of its leaves
\end{defin}

There is a classical way to build a 
compact real tree from a continuous nonnegative function, see Le Gall \cite{lg2}.

\begin{lem}\label{cont}
Let $a \in [0,\infty)$ and $g:[0,a]\mapsto \rr_+$ be continuous and satisfy $g(0)=0$.
For $s,t \in [0,a]$,
put $\tilde d_g(s,t)=g(s)+g(t)-2 \min_{u\in [s\land t,s\lor t]}g(u) $, which is a pseudo-distance on $[0,a]$.
We introduce the equivalence relation $s \sim^g t$ iff $\tilde d_g(s,t)=0$. 
Consider the quotient space $T_g=[0,a]/\sim^g$ and the canonical projection $\pi_g:[0,a]\mapsto T_g$.
The pseudo-distance
$\tilde d_g$ induces a distance $d_g$ on $T_g$. Define the root $\rho=\pi_g(0)$ 
and introduce the measure $\mu_g= \lambda \circ \pi_g^{-1}$ on $T_g$, where $\lambda$
is the Lebesgue measure on $[0,a]$.
Then $\cT_g=(T_g,d_g,\mu_g)$ is a measured rooted real tree in the sense of Definition \ref{realtree}.
The function $g$ is called the contour of $\cT_g$.
\end{lem}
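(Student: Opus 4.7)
The plan is to verify that $\tilde d_g$ defines a pseudo-distance on $[0,a]$, pass to the quotient $T_g$, and then construct the isometries $\phi_{x,y}$ explicitly before handling property (ii). Nonnegativity of $\tilde d_g$ is immediate since $\min_{[s\wedge t, s\vee t]}g \le g(s)\wedge g(t)$, and symmetry is obvious. The triangle inequality $\tilde d_g(s,u)\le \tilde d_g(s,t)+\tilde d_g(t,u)$ is the only work: assuming without loss of generality $s\le u$, I would split into the subcases $t\in[s,u]$, $t<s$, $t>u$ and compare minima on overlapping subintervals, each case reducing to an elementary inequality. Once $\tilde d_g$ is a pseudo-distance, the relation $\sim^g$ is automatically an equivalence relation, $d_g$ is a genuine distance on $T_g$, the point $\rho=\pi_g(0)$ is a well-defined root, and $\mu_g=\lambda\circ\pi_g^{-1}$ is a Borel measure on $T_g$ of total mass $a$.

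Next, to construct $\phi_{x,y}$: given $x=\pi_g(s)$ and $y=\pi_g(t)$ with $s\le t$, set $m=\min_{[s,t]}g$ and pick any $u_0\in[s,t]$ with $g(u_0)=m$; note $d_g(x,y)=g(s)+g(t)-2m$. The isometry should first descend from $x$ to $\pi_g(u_0)$ and then climb up to $y$. For $r\in[0,g(s)-m]$ I would define $\phi_{x,y}(r)=\pi_g(\sigma(r))$, where $\sigma(r)=\sup\{u\in[s,u_0]:g(u)=g(s)-r\}$, this supremum being attained by continuity of $g$; for $r\in[g(s)-m,\,d_g(x,y)]$ I symmetrically parametrize the climb from $u_0$ to $t$. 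A direct computation of $\tilde d_g(\sigma(r),\sigma(r'))$ along the descent (and similarly on the ascent, and across the junction at $u_0$) shows the map is an isometry, and the construction depends only on the $\sim^g$-classes of $s$ and $t$, not on the choice of representatives or of $u_0$.

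The hard part will be property (ii), which is what actually makes $\mathcal T_g$ tree-like rather than just path-connected. Given a continuous injective $h:[0,1]\to T_g$ with $h(0)=x,\,h(1)=y$, I would show both inclusions between $h([0,1])$ and $\phi_{x,y}([0,d_g(x,y)])$. For the inclusion $\phi_{x,y}([0,d_g(x,y)])\subset h([0,1])$, the key observation is that for any internal point $z=\pi_g(u_0)$ of the geodesic, removing $z$ disconnects $T_g$ into pieces separating $x$ from $y$: indeed, any continuous path from $\pi_g(s')$ with $s'<u_0$ to $\pi_g(t')$ with $t'>u_0$ must at some time pass through a point whose contour value coincides with $g(u_0)$ on $[s',t']$, forcing it to hit the class of $u_0$. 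Hence $h^{-1}(\{z\})$ is nonempty. For the reverse inclusion, any point of $h([0,1])$ outside the geodesic would, together with the geodesic piece it branches off from, produce a loop in $T_g$, contradicting injectivity of $h$ combined with the disconnection argument just described. Since the disconnection property of $u_0$ is precisely what the definition of $\tilde d_g$ encodes (the minimum over $[s',t']$ is exactly $g(u_0)$), this is where the explicit formula for $\tilde d_g$ is essential; everything else is bookkeeping. In the final write-up one can also simply invoke the standard references \cite{lg2,dlg}, since this lemma is stated only to fix notation used later.
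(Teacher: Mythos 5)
The paper gives no proof of this lemma at all: it is quoted as a classical fact with a pointer to Le~Gall \cite{lg2}, so there is nothing in the source to compare against, and your closing remark that one may simply cite \cite{lg2,dlg} is exactly what the authors do. Your sketch of the standard construction has the right shape (the triangle inequality for $\tilde d_g$, descent-then-ascent parametrization of the geodesic, and the disconnection argument for property (ii) of Definition \ref{realtree}), but the explicit geodesic parametrization is wrong as written. Taking $\sigma(r)=\sup\{u\in[s,u_0]:g(u)=g(s)-r\}$ does not guarantee $\min_{[s,\sigma(r)]}g=g(s)-r$: the function $g$ may dip below the level $g(s)-r$ on $[s,u_0]$ and come back up to it before finally descending to $m$, in which case $\tilde d_g(s,\sigma(r))=2g(s)-r-2\min_{[s,\sigma(r)]}g>r$ and your map is not an isometry. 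Concretely, take $g$ piecewise linear with values $3,1,2,0$ at $0,1,2,3$ and $s=0$, $u_0=3$, $r=1$: then $\sigma(1)=2$ and $\tilde d_g(0,2)=3+2-2\cdot 1=3\neq 1$. The fix is to use the \emph{first} hitting time $\sigma(r)=\inf\{u\ge s:g(u)=g(s)-r\}$; then $g>g(s)-r$ on $[s,\sigma(r))$ by the intermediate value theorem, so $\min_{[s,\sigma(r)]}g=g(s)-r$, and one checks directly that $\tilde d_g(\sigma(r),\sigma(r'))=r'-r$ for $0\le r\le r'\le g(s)-m$; on the ascending side one symmetrically takes the last hitting time before $t$. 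Your treatment of property (ii) is phrased loosely (``produce a loop''), but the underlying idea --- that any interior geodesic point disconnects $x$ from $y$ and hence must lie in the range of $h$, while an excursion of $h$ off the geodesic would force $h$ to return through the same branch point, contradicting injectivity --- is the standard one and can be made precise.
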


This result is easily extended to build real trees with one infinite branch.

\begin{lem}\label{cont2}
Let  $g:\rr \mapsto \rr_+$ be continuous, satisfy $g(0)=0$ and $g(-\infty)=g(\infty)=\infty$. 
For $s,t \in \rr$ with $s<t$, we introduce
$$
\tcl s,t \tcr = \left\{ \begin{array}{l}
[s,t]  \quad \hbox{if  $0\leq s \leq t$ or $s\leq t \leq 0$,}\\
(-\infty,s]\cup [t,\infty) \quad \hbox{if $s < 0 <t$.}
\end{array}\right.
$$
For $s,t \in \rr$,
put $\tilde d_g(s,t)=g(s)+g(t)-2 \min_{u\in \tcl s\land t,s\lor t \tcr}g(u) $, which is a pseudo-distance on $\rr$.
We introduce the equivalence relation $s\sim^g t$ iff $\tilde d_g(s,t)=0$. 
Consider the quotient space $T_g=\rr/\sim^g $ and the canonical projection $\pi_g:\rr\mapsto T_g$.
The pseudo-distance
$\tilde d_g$ induces a distance $d_g$ on $T_g$. Define the root $\rho=\pi_g(0)$ 
and introduce the measure $\mu_g= \lambda \circ \pi_g^{-1}$ on $T_g$, where $\lambda$
is the Lebesgue measure on $\rr$.
Then $\cT_g=(T_g,d_g,\mu_g)$ is a measured rooted real tree in the sense of Definition \ref{realtree}.
The function $g$ is called the contour of $\cT_g$.
\end{lem}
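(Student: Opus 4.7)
The plan is to mimic the proof of Lemma \ref{cont} (see Le Gall \cite{lg2}), with the interval $[s,t]$ replaced everywhere by the bracket interval $\tcl s,t\tcr$, which plays the role of the natural route between two contour times when the tree has one infinite branch. Two new features need attention: the infima defining $\tilde d_g$ now range over unbounded sets, so one must exploit $g(\pm\infty)=\infty$ to reduce to infima over compact sets; and the set $\tcl s,t\tcr$ is no longer always an interval, which makes the geodesic construction in the case $s<0<t$ more delicate.

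First I would check that $\tilde d_g$ is a pseudo-distance on $\rr$. Non-negativity, symmetry, and $\tilde d_g(s,s)=0$ are immediate: both $s$ and $t$ lie in $\tcl s\land t,s\lor t\tcr$, so $\min_{\tcl s\land t,s\lor t\tcr} g\leq g(s)\land g(t)$ and hence $\tilde d_g(s,t)\geq|g(s)-g(t)|\geq 0$. For the triangle inequality, writing $m_g(s,t):=\min_{\tcl s\land t,s\lor t\tcr}g$, the key observation is that for all $r,s,t\in\rr$,
$$
\tcl r\land t,r\lor t\tcr\subseteq \tcl r\land s,r\lor s\tcr\cup \tcl s\land t,s\lor t\tcr,
$$
which one verifies by a short case analysis on the signs of $r,s,t$. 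This yields $m_g(r,t)\geq \min(m_g(r,s),m_g(s,t))$, and combined with $m_g(r,s)\land m_g(s,t)\leq g(s)$ it gives $\tilde d_g(r,t)\leq \tilde d_g(r,s)+\tilde d_g(s,t)$ in the usual way. The induced distance $d_g$ on the quotient $T_g$ and the Borel measure $\mu_g=\lambda\circ\pi_g^{-1}$ are then well defined, the latter because $\pi_g$ is continuous.

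To verify the real-tree axioms, given $x=\pi_g(s)$, $y=\pi_g(t)$ with $s<t$, I would pick $u^*\in\tcl s,t\tcr$ attaining $g(u^*)=m_g(s,t)$. Its existence is the first use of $g(\pm\infty)=\infty$: on each unbounded piece of $\tcl s,t\tcr$, $g$ eventually exceeds any given threshold, so the infimum equals the minimum over a compact subset and is attained. The isometry $\phi_{x,y}$ will be built as the concatenation of two monotone paths meeting at $\pi_g(u^*)$. When $\tcl s,t\tcr=[s,t]$, i.e.\ $0\notin(s,t)$, this is exactly the construction of Lemma \ref{cont} applied to $g|_{[s,t]}$. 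When $s<0<t$, $u^*$ lies (by symmetry) in $(-\infty,s]$; the first half of the geodesic, from $\pi_g(s)$ to $\pi_g(u^*)$, lives inside the sub-tree $T_{g|_{[u^*,s]}}$ and is handled directly by Lemma \ref{cont}. For the second half, from $\pi_g(u^*)$ to $\pi_g(t)$, I would pick $V\geq t$ with $g(V)=g(u^*)\lor g(t)$ and $V'\leq u^*$ with $g(V')=g(V)$ (using once more $g(\pm\infty)=\infty$ together with the intermediate value theorem), build the two half-geodesics inside $T_{g|_{[V',u^*]}}$ and $T_{g|_{[t,V]}}$ via Lemma \ref{cont}, and glue them by identifying the matching points of height $g(V)$, relying on the computation $m_g(V',V)=g(V)$ to check that this gluing is consistent inside $T_g$.

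Axiom (ii) will then follow by the standard real-tree argument: any continuous injective path from $x$ to $y$ must have the same image as $\phi_{x,y}$, for otherwise one would produce a nontrivial loop contradicting the four-point condition encoded in the triangle inequality above. The hard step in the whole proof is the geodesic construction in the case $s<0<t$: identifying the right pieces of the contour to use for the second half and verifying that the gluing produces a well-defined isometry is the one genuinely new ingredient compared with the proof of Lemma \ref{cont}.
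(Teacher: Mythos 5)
The paper gives no proof of Lemma \ref{cont2}; it is stated as an ``easy extension'' of Lemma \ref{cont}, so I evaluate your proposal on its own terms. Your overall plan is the natural one and identifies the right new ingredients (compactifying the infima via $g(\pm\infty)=\infty$, and building the geodesic through the minimizer when $s<0<t$). However, two of the claims that are supposed to carry the verification are false as stated.

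First, the inclusion
$$
\tcl r\land t,r\lor t\tcr\ \subseteq\ \tcl r\land s,r\lor s\tcr\ \cup\ \tcl s\land t,s\lor t\tcr
$$
fails precisely when one of $r,s,t$ equals $0$ and the other two straddle $0$. For instance $r=0$, $s=-1$, $t=1$: the left side is $\tcl 0,1\tcr=[0,1]$, the right side is $[-1,0]\cup\big((-\infty,-1]\cup[1,\infty)\big)=(-\infty,0]\cup[1,\infty)$, and $(0,1)$ is not covered. The inequality $m_g(r,t)\geq m_g(r,s)\wedge m_g(s,t)$ does still hold, but in those boundary cases for a different reason: $0$ is then an endpoint of at least one of the two brackets on the right, so the corresponding minimum is forced to equal $g(0)=0\leq m_g(r,t)$. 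You need to treat these cases explicitly; the inclusion argument alone does not close the triangle inequality.

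Second, the gluing in your geodesic construction for $s<0<t$ does not work. Since $u^*$ minimizes $g$ over a set containing $t$, $g(u^*)\leq g(t)$, hence $g(u^*)\lor g(t)=g(t)$ and $V=t$, so your recipe degenerates; more to the point, the identity $m_g(V',V)=g(V)$ on which you rely to identify $\pi_g(V')$ with $\pi_g(t)$ is false in general. Concretely, take $g$ piecewise linear with $g(0)=0$, $g(\pm1)=5$, $g(-2)=2$, $g(2)=3$, increasing to $\infty$ outside $[-2,2]$; with $s=-1$, $t=1$, the minimizer is $u^*=-2$, $m_g(s,t)=2$, and the only $V'\leq u^*$ with $g(V')=5$ is $V'=-3$. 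Then $m_g(V',t)=\min_{(-\infty,-3]\cup[1,\infty)}g=3\neq 5$, so $\tilde d_g(V',t)=4\neq 0$: the half-geodesics do not glue. A correct construction never leaves $[s,t]$. For $h$ decreasing from $g(s)$ to $m_g(s,t)$ take $\pi_g\big(\inf\{v>s:g(v)=h\}\big)\in\pi_g((s,0])$, and for $h$ increasing from $m_g(s,t)$ to $g(t)$ take $\pi_g\big(\sup\{v<t:g(v)=h\}\big)\in\pi_g([0,t))$; one checks directly from the formula for $\tilde d_g$ (with $u<0<v$) that the two parametrizations agree at $h=m_g(s,t)$, both realizing the same tree point as the minimizer $u^*$, and give an isometric copy of $[0,\tilde d_g(s,t)]$. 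The subtlety you missed is that although the pseudo-distance ``goes around infinity'' when $s<0<t$, the geodesic itself is realized by contour times in $[s,t]$; the minimizer $u^*$ lying in $(-\infty,s]\cup[t,\infty)$ always has a representative in $[s,0]$ (and another in $[0,t]$) for the equivalence relation $\sim^g$.
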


\subsection{Aldous's CRT}\label{ss2}

The famous continuum random tree (CRT) introduced by Aldous \cite{a1,a2,a3} is the real tree
$\cT_{2\be}=(T_{2\be},d_{2\be},\mu_{2\be})$, where $\be=(\be_t)_{t\in[0,1]}$ is a normalized Brownian excursion.
The factor $2$ is unimportant and replaced by $1$ in many papers.
Here we need the following one-parameter family of CRTs.

\begin{defin}\label{dfcrt}
Let $\be=(\be_t)_{t\in[0,1]}$ be the normalized Brownian excursion. For $u>0$, 
we introduce the Brownian excursion $\be^u=(\be^u_s)_{s\in [0,u]}$ with length $u$, defined by
$\be^u_s= \sqrt u . \be_{s/u}$.
We call CRT$(u)$ the random rooted measured real 
tree $\cT_{\be^u}=(T_{\be^u},d_{\be^u},\mu_{\be^u})$ (recall Lemma \ref{cont}) and we denote
by $Q_u$ its law, which is a probability measure on $\tT$.
\end{defin}

The CRT is universal in that it is the scaling limit of any reasonable critical or sub-critical
Galton-Watson tree conditioned on being large, as shown by Aldous \cite{a3}. 
Let us now recall some properties of the CRT we will use later.

\begin{lem}\label{crt}
Let $u>0$ and let  $\cT_{\be^u}=(T_{\be^u},d_{\be^u},\mu_{\be^u})$ be a CRT$(u)$ with root $\rho$.

(i) $\mu_{\be^u}(T_{\be^u})=\mu_{\be^u}(L(T_{\be^u}))=u$.

(ii) $\int_{T_{\be^u}} d_{\be^u}(\rho,x) \mu_{\be^u}(dx) = \int_0^u \be^u_s ds = u^{3/2}\int_0^1 \be_s ds$. 
\end{lem}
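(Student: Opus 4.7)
The plan is to prove each assertion directly from the explicit construction in Lemma \ref{cont}, so the real content is just to identify $d_{\be^u}$, $\mu_{\be^u}$, and $L(T_{\be^u})$ through the contour $\be^u$.

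For (i), the equality $\mu_{\be^u}(T_{\be^u})=u$ is immediate from the definition $\mu_{\be^u}=\lambda\circ \pi_{\be^u}^{-1}$, since $\lambda([0,u])=u$. The nontrivial part is $\mu_{\be^u}(L(T_{\be^u}))=u$, which amounts to showing that for Lebesgue-almost every $s\in[0,u]$, the class $\pi_{\be^u}(s)$ is a leaf. A point $\pi_{\be^u}(s)$ fails to be a leaf iff its equivalence class contains some $t\ne s$ (in which case one can reach $\pi_{\be^u}(s)$ as an interior point of the branch from $\pi_{\be^u}(t)$ along the contour excursion), and $s\sim^{\be^u} t$ requires $\be^u$ to attain its minimum on $[s\land t,s\lor t]$ at both $s$ and $t$. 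Since Brownian motion almost surely has only countably many local minima and each local minimum is attained at a unique time, the set of such $s$ has Lebesgue measure zero. I would invoke this standard fact (it is recalled in Le Gall \cite{lg2}) rather than reprove it.

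For (ii), the key observation is that for $s\in(0,u)$ one has $\min_{v\in[0,s]}\be^u_v=0$ (attained at $v=0$, since $\be^u>0$ on $(0,u)$), so
\[
d_{\be^u}(\rho,\pi_{\be^u}(s))=\tilde d_{\be^u}(0,s)=\be^u_0+\be^u_s-2\min_{v\in[0,s]}\be^u_v=\be^u_s.
\]
Then the change of variables formula for the pushforward measure $\mu_{\be^u}=\lambda\circ\pi_{\be^u}^{-1}$ yields
\[
\int_{T_{\be^u}} d_{\be^u}(\rho,x)\,\mu_{\be^u}(dx)=\int_0^u \be^u_s\,ds,
\]
and the scaling $\be^u_s=\sqrt u\,\be_{s/u}$ together with the substitution $t=s/u$ gives $\int_0^u\be^u_s\,ds=u^{3/2}\int_0^1\be_t\,dt$.

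The only step with any substance is the leaf identification in (i); the main obstacle, if any, is to justify cleanly that the non-leaf times form a Lebesgue-null set. Everything else is a direct unwinding of definitions.
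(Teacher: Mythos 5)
Your proof of (ii) and of $\mu_{\be^u}(T_{\be^u})=u$ is exactly what the paper does: unwind the definition of $\mu_{\be^u}=\lambda\circ\pi_{\be^u}^{-1}$, compute $d_{\be^u}(\rho,\pi_{\be^u}(s))=\be^u_s$ (since the minimum of $\be^u$ on $[0,s]$ is $0$), then apply the change of variables and the scaling relation $\be^u_s=\sqrt u\,\be_{s/u}$. The paper likewise handles $\mu_{\be^u}(T_{\be^u}\setminus L(T_{\be^u}))=0$ by a citation (to Aldous \cite{a3}), and your decision to invoke the same standard fact is the right call, so overall the two arguments are essentially identical.

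One caveat on your sketch of that fact: the set of contour times $s$ for which $\pi_{\be^u}(s)$ is \emph{not} a leaf is not countable — it is the preimage of the skeleton, which contains a full continuum of points on every geodesic segment — so the reasoning ``countably many local minima $\Rightarrow$ measure zero'' does not go through as stated. What is true is that $s\sim^{\be^u} t$ with $t\neq s$ forces $s$ to be a time at which $\be^u$ equals its one-sided running minimum over some interval to the left or to the right, and the standard argument that this set is Lebesgue-null is a Fubini argument: for each fixed $s$, the probability that $\be^u_v\geq\be^u_s$ for all $v$ in some one-sided neighbourhood of $s$ is zero. Since you ultimately defer to the reference rather than rely on the sketch, this does not affect the validity of your proof, but the intermediate heuristic should not be read as a correct justification.
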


\begin{proof}
The fact that $\mu_{\be^u}(T_{\be^u}\setminus L(T_{\be^u}))=0$ can be found in Aldous \cite{a3}. 
Now recall that by definition (see Lemma \ref{cont}), 
$\mu_{\be^u}=\lambda\circ \pi_{\be^u}^{-1}$, where $\lambda$ is the Lebesgue measure on $[0,u]$. 
Consequently, $\mu_{\be^u}(T_{\be^u})=u$.
Furthermore, 
$\int_{T_{\be^u}} d_{\be^u}(\rho,x) \mu_{\be^u}(dx)=\int_0^u d_{\be^u}(\rho,\pi_{\be^u}(s))ds$. 
But $\rho= \pi_{\be^u}(0)$,
whence, still by definition,
$d_{\be^u}(\rho,\pi_{\be^u}(s)) =\tilde d_{\be^u}(0,s)=\be^u_0+\be^u_s - 2 \min_{t\in[0,s]}\be^u_t=\be^u_s$.
\end{proof}

\subsection{Aldous's SSCRT}\label{ss3}

The random real tree we will be interested in, because it arises as a scaling
limit of the SBBCGWT, is, up to some scaling factor, 
the self-similar continuum random tree (SSCRT in short) of Aldous.
The definition we give is similar to
that of Aldous \cite[Section 6]{a1}, but the formalism is different.

\begin{defin}\label{mieux}
Consider $[0,\infty)$ endowed with its usual distance as a {\rm backbone}.
Let $N(dx,du,d\cT)$ be a Poisson measure
on $[0,\infty)\times (0,\infty)\times \tT$
with intensity measure $2 dx \bn(du) Q_u(d\cT)$, where  $\bn(du)= du / \sqrt{2\pi u^3}$.
For each mark $(x,u,\cT)$ of $N$, with $\cT=(T,d,\mu)$, plant $T$ on the backbone at $x$
(that is, connect the backbone and $\cT$ by identifying the root of $\cT$ and $x$).
At the end, we have a binary tree $H$, endowed with a distance $\delta$ (induced
by the distance on the backbone and the distances on the trees we have planted) and with a measure $\nu$ 
(induced by the measures on the trees we have planted). Say that the root 
$\rho$ is $0$ (of the backbone).
The random measured rooted real tree
$\cH=(H,\delta,\nu)$ is called a SSCRT. It holds that $\nu(H\setminus L(H))=0$.
\end{defin}

This last property is obviously inherited from Lemma \ref{crt}-(i).
The following lemma corresponds to another construction similar to Aldous \cite[Section 2.5]{a2}.

\begin{lem}\label{concon}
Consider a two-sided Bessel$(3)$-process $(C_t)_{t\in \rr}$, i.e.  $(C_t)_{t\geq 0}$
and $(C_{-t})_{t\geq 0}$ are two independent Bessel$(3)$-processes starting at $0$,
see Revuz-Yor \cite[Chapter VI]{ry}. The 
random measured rooted real tree $\cT_{C}$ built from $C$ as in Lemma 
\ref{cont2} is a SSCRT in the sense of Definition \ref{mieux}.
\end{lem}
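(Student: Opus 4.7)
The plan is to identify the contour construction from Lemma \ref{cont2}, applied to the two-sided Bessel$(3)$-process $C$, with the Poisson spinal construction of the SSCRT in Definition \ref{mieux}; this is essentially a reformulation of Aldous \cite[Section 2.5]{a2}.

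\emph{Step 1: identifying the backbone.} For $t\in\rr$, put $M(t):=\inf\bigl\{C_u:u\in\tcl 0\wedge t,\,0\vee t\tcr\bigr\}$. Then $M$ is continuous, $M(0)=0$, and $M(t)\to\infty$ as $|t|\to\infty$ a.s.\ (because $C_{\pm\infty}=\infty$). A direct computation from the formula for $\tilde d_C$ shows that whenever $C_{t_i}=M(t_i)$ for $i=1,2$ (regardless of the signs of $t_1,t_2$) one has $\tilde d_C(t_1,t_2)=|C_{t_1}-C_{t_2}|=|M(t_1)-M(t_2)|$. Consequently the set $S:=\{\pi_C(t):C_t=M(t)\}\subset T_C$ is isometric to $[0,\infty)$ via the height map $h=M(t)$, with root at $h=0$, and this $S$ is the unique infinite geodesic ray issued from $\rho$: it will play the role of the backbone of Definition \ref{mieux}.

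\emph{Step 2: the excursion point process.} On $[0,\infty)$, set $J^+(t):=\inf_{s\geq t}C_s$, which coincides with $M(t)$. The classical Pitman-type decomposition of Bessel$(3)$ asserts that, once reparametrised by $h=J^+(t)\in[0,\infty)$, the excursions of $C-J^+$ above $0$ form a Poisson point process whose intensity is the It\^o excursion measure of Brownian motion. Disintegrating by excursion length and applying Lemma \ref{cont} together with Definition \ref{dfcrt} (a Brownian excursion of length $u$ gives rise to a CRT$(u)$), this intensity reads $\bn(du)\,Q_u(d\cT)$ with $\bn(du)=du/\sqrt{2\pi u^3}$. The same decomposition applied independently to $(C_{-t})_{t\geq 0}$ produces an independent copy; by Step 1 the two semi-axes plant their excursion-subtrees on the \emph{same} backbone $[0,\infty)$ via the common height parametrisation, so the two independent Poisson processes superimpose to a single Poisson point process of intensity
$$2\,dh\,\bn(du)\,Q_u(d\cT),$$
matching exactly Definition \ref{mieux}.

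\emph{Step 3: subtrees and leaf measure.} For any excursion interval $I=(a,b)\subset\rr$ on which $C>M$ with $C_a=C_b=h$, the infima that appear in $\tilde d_C(s,t)$ for $s,t\in I$ never leave $I$; hence the restriction of $\sim^C$ to $I$ coincides with the equivalence relation attached by Lemma \ref{cont} to the excursion $\omega(s):=C_{a+s}-h$, $s\in[0,b-a]$. Therefore $\pi_C(I)$ is exactly the CRT built from $\omega$, glued to $S$ at the spine point of height $h$; conditionally on its length $u$, $\omega$ is a Brownian excursion of length $u$, so the subtree is $Q_u$-distributed. Finally $\{t\in\rr:C_t=M(t)\}$ is a.s.\ Lebesgue-null, so $\mu_C$ charges no point of the backbone and restricts on each excursion-subtree to the push-forward of Lebesgue on $I$, i.e.\ to the CRT measure. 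Putting the three steps together, $\cT_C$ has the law of the tree of Definition \ref{mieux}.

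\emph{Main obstacle.} The only substantive ingredient is Step 2: the Pitman/It\^o-excursion decomposition identifying the Poisson intensity of excursions of Bessel$(3)$ above its future infimum, together with the correct normalisation that produces exactly $\bn(du)=du/\sqrt{2\pi u^3}$. Once this is granted, the factor $2$ in front of $dh$ is automatic from the independence of the two semi-axes of the two-sided process, and the matching of measures (Step 3) is routine.
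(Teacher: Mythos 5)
Your argument is essentially the paper's proof run in the opposite direction: the paper \emph{builds} the Bessel$(3)$ process from a Poisson excursion measure and identifies it via Pitman's theorem (Revuz--Yor, Chapter VI, Corollary 3.8), then exhibits an isometry between the two trees, whereas you start from the two-sided Bessel$(3)$ process, read off the backbone as the set of one-sided infima, and decompose the excursions above it into a Poisson point process with intensity $\bn(du)\,Q_u(d\cT)$ using the same Pitman/It\^o machinery, so both proofs rest on the same ingredients and the same identifications. One small slip: as written, $M(t)=\inf_{u\in[0\wedge t,\,0\vee t]}C_u\equiv 0$ since $C_0=0$ and $C\geq 0$; you clearly intend the one-sided infimum $\inf_{s\geq t}C_s$ for $t\geq 0$ and $\inf_{s\leq t}C_s$ for $t\leq 0$, which is what Step~2 tacitly uses (via $J^+$) and what makes the distance computation and the backbone identification in Step~1 go through.
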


All this is certainly contained in
Aldous \cite{a1,a2,a3} with a different formalism.

\begin{proof} 
We divide the proof into several steps.

\vip

{\it Step 1.} First, we use the excursion theory, see Revuz-Yor \cite[Chapter XII]{ry}, to build
a Bessel$(3)$-process.
For $u>0$, denote by $\cE_u$ the set of all continuous
functions from $[0,u]$ into $[0,\infty)$ and
by $R_u$ the law of the Brownian excursion with length $u$, which is a probability measure
on $\cE_u$. Set $\cE=\cup_{u > 0} \cE_u$.
Consider a Poisson measure $M(dx,du,d\be)$ on $[0,\infty)\times [0,\infty)\times \cE$ with intensity measure
$dx \bn(du) R_u(d\be)$. 
For $x\geq 0$, let $\tau_x=\int_0^x \int_0^\infty \int_\cE u M(dx,du,d\be)$.
The map $x \mapsto \tau_x$ is a.s. increasing on $[0,\infty)$.
Define, for $t\geq 0$, its inverse $L_t= \inf \{x \geq 0 : \tau_x >t\}$. 
The map $t\mapsto L_t$ is a.s. continuous
and nondecreasing. Observe that for all $x\geq 0$, $\tau_{x-}= \inf \{t\geq 0 : L_t=x\}$
and $\tau_{x}= \sup \{t\geq 0 : L_t=x\}$.
Consider $\cI=\{x\geq 0 : \tau_{x-} \ne \tau_{x}\}$.
For $x \in \cI$, we denote by
$(x,u(x),\be(x))$ the corresponding mark of $M$, with $\be(x)= (\be(x,t))_{t \in [0,u(x)]}$.
By construction, $u(x)=\tau_x-\tau_{x-}$ for all $x\in \cI$.
Define $\cA= \cup_{x \in \cI} (\tau_{x-},\tau_{x})$ and $\cB = [0,\infty)\setminus \cA$. 
Put $W_t= 0$ for all $t \in \cB$ and $W_t= \epsilon(x) \be(x,t-\tau_{x-})$ for
$t \in  (\tau_{x-},\tau_{x})$, the sign $\epsilon(x)$ being chosen with a fair coin toss for each $x$.
It holds that $W$ is a Brownian motion and that $L$ is its local time at $0$ and thus
$X=L+|W|$ is a Bessel$(3)$-process, see \cite[Chapter VI, Corollary 3.8]{ry}.
It holds that $L_t= \inf_{[t,\infty)} X_s$.

\vip

{\it Step 2.} Build two Bessel$(3)$-processes $X^1$ and $X^2$ as in Step 1, using two
independent Poisson measures $M^1$ and $M^2$.
Introduce the two-sided Bessel$(3)$-process $C$ defined by $C_t=X^1_t$ for $t\geq 0$
and $C_t=X^2_{-t}$ for $t\leq 0$. Build the tree $\cT_C$ as in Lemma \ref{cont2}.

\vip

{\it Step 3.} Introduce the Poisson measure $N$ on $[0,\infty)\times [0,\infty)\times \tT$, image of
$M^1+M^2$ by the map $(x,u,\be) \mapsto (x,u,\cT_\be)$.
The intensity measure of $N$ is $2 dx \bn(du) Q_u(d\cT)$, because the image measure of $R_u$ by
the map $\be \mapsto \cT_\be$ is $Q_u$, recall Definition
\ref{dfcrt}. Build the tree $\cH=(H,\delta,\nu)$
as in Definition \ref{mieux} with this Poisson measure $N$. 

\vip

{\it Step 4.} Finally, it is a tedious but straightforward exercise to show that 
$\cT_{C}$ and $\cH$ are a.s. isometric. Let us indicate that (i) the backbone $[0,\infty)$ of $H$
corresponds $\{\pi_C(s) : (s \geq 0$ and $L^1_s=X^1_s)$ or  $(s\leq 0$ and $L^2_{-s}=X^2_{-s})\}$ in $T_C$ 
(recall Lemma \ref{cont2})
(ii) for each mark $(x,u(x),\be(x))$ of $M^1$ the tree $\cT_{\be(x)}$ planted at $x$ on the 
backbone of $H$ corresponds to $\{\pi_C(s),s \in (\tau^1_{x-},\tau^1_x)\}$ in $T_C$ (observe that
$C_t=x+\be(x,t-\tau^1_{x-})$ for $t \in (\tau^1_{x-},\tau^1_x)$), (iii)
for each mark $(x,u(x),\be(x))$ of $M^2$ the tree $\cT_{\be(x)}$ planted at $x$ on the 
backbone of $H$ corresponds to $\{\pi_C(s),s \in (-\tau^2_x, -\tau^2_{x-})\}$) in $T_C$ (observe that
$C_t=x+\be(x,-\tau^2_{x-}-t)$ for $t \in (-\tau^2_x, -\tau^2_{x-}))$. We used the notation of Step 1
with the additional superscripts $1,2$.
\end{proof}

The following remark explains why the SSCRT should be useful to us.

\begin{rem}\label{sl} 
Let $\hG$ be a SBBCGWT as in Definition \ref{fgw}. For $p>0$, endow the edges of $\hG$ with a length
$p^{-1/2}$. This induces a distance $d_p$ on $\hG$. Consider the measure 
$\mu_p=4 p^{-1} \sum_{x\in \hG} \delta_x$.
Then we believe that $(\hG,d_p,\mu_p)$ tends to the SSCRT  $(H,\delta,\nu)$ as $p\to \infty$.
A similar fact is mentioned in Aldous \cite[Section 2.7]{a2}
and Duquesne \cite[Theorem 1.5]{du} proves a very general result, including at least
the fact that  $(\hG,d_p)$ tends to $(H,\delta)$.
\end{rem}

Let us handle some brief computations showing that the scales and constant factors are
correct. We fix $x>0$.

\vip

$\bullet$ Let $A^p_x \subset \hG$ be composed of all the vertices (including
roots and terminal points) of the 
Galton-Watson trees planted on $\{\star,\star 1, \dots, \star{\lfloor \sqrt p x  \rfloor}\}$,
see Figure \ref{fig3}. Then $\mu_p(A^p_x)=4|A^p_x|/p$ and one easily checks that 
$|A^p_x|=1+2\sum_1^{\lfloor \sqrt px \rfloor} |L(G(i))|$.
Recalling that $|L(G(1))|\sim (p_k)_{k\geq 1}$ and using the same arguments as
in the proof of Theorem \ref{mr2}, we see that 
$\E[e^{-q|L(G(1))|}]=1-\sqrt{1-e^{-q}}$, whence 
$\E[e^{-q\mu_p(A^p_x)}]= e^{-4q/p}(1-\sqrt{1-e^{-8q/p}})^{\lfloor \sqrt px \rfloor}$, which tends
to $\exp(-2x\sqrt{2q})$.

\vip

$\bullet$ Let $A_x \subset H$ be composed of the part of the 
backbone $[0,x]$ and of all the trees planted on $[0,x]$. Then
$\nu(A_x)=\int_0^x\int_0^\infty\int_{\tT}u N(dy,du,d\cT)$,
since the backbone has no mass and since 
for each mark $(y,u,\cT)$ of $N$, with $\cT=(T,d,\mu)$, it holds that
$\mu(T)=u$ (because $\cT$ is a CRT$(u)$ and due to Lemma \ref{crt}-(i)).
Consequently, 
$\E[e^{-q\nu(A_x)}]=\exp(-x \int_0^\infty (1-e^{-qu})\sqrt{2/(\pi u^3)}du)=\exp(-2x\sqrt{2q})$.
We used that, performing an integration by parts and the substitution $u=x^2$,
\begin{align}\label{reutil}
\int_0^\infty (1-e^{-qu})\sqrt{\frac 2 {\pi u^3}}du =
\int_0^\infty e^{-qu} \frac{2\sqrt{2} q du}{\sqrt {\pi u}}=
\int_0^\infty e^{-q x^2} \frac{4\sqrt 2 q dx}{\sqrt \pi}=
2\sqrt{2q}.
\end{align}

\subsection{The limit pruning procedure}\label{ss4}
We introduce the pruning procedure $\cP_\infty$.

\vip

\begin{defin}\label{defpi} 
Let $\cT=(T,d,\mu)$ be a measured rooted real tree. We define $\cP_\infty(\cT)
=(T_\infty,d_\infty,\mu_\infty)$ as follows.

\vip

{\it Step 1.} Consider a Poisson measure $M(dz,dx)$ 
on $\{(z,x): z \in T, x \in B_T(z)\}$
with intensity measure $(1/16)\mu(dz)\lambda_{B_T(z)}(dx)$, where $B_T(z)$ is 
the branch of $T$ joining
$z$ to the root $\rho$, endowed with the Lebesgue measure $\lambda_{B_T(z)}$.
For each $t\geq 0$, we define $T_t= \{x\in T: \delta(x,\rho)=t\}$
and we introduce the height $t_0=\inf\{t>0 : T_t=\emptyset\}\in [0,\infty]$
of $\cT$.
We activate all the marks of $M$.

\vip

{\it Step 2.} We explore the tree $T$ from the top until we arrive the the 
root $\rho$ (that is, we consider $T_t$, for $t$ decreasing from $t_0$ 
to $0$),
with the following rule: each time we encounter a point $X$ issued from an active 
mark $(Z,X)$ of $M$, we remove the tree above $X$ (i.e. the whole descendance 
of $X$), we replace it by a leave
and we deactivate all the marks $(Z',X')$ of $M$ with $Z'$ descendant of $X$ (which means that
$X \in B_T(Z')$).

\vip

{\it Step 3.} We end with a tree $T_\infty \subset T$, that we endow with 
the distance $d_\infty$ and the measure $\mu_\infty$, 
restrictions of $d$ and $\mu$ to $T_\infty$.
We set $\cP_\infty(\cT)=(T_\infty,d_\infty,\mu_\infty)$.

\vip

A mark of $M$ is said to be {\rm useful} if has generated a pruning at some
time in the procedure and {\rm useless} otherwise.
\end{defin}

The picture is very similar to Figure \ref{fig1}.
The pruning procedure $\cP_\infty$ is not clearly well-defined if the height of
$\cT$ is infinite.

\begin{prop}\label{piwd}
Let $\cH=(H,\delta,\nu)$ be a SSCRT. Then $\cP_\infty(\cH)$  
(the Poisson measure $M$ used by $\cP_\infty$ being picked conditionally on $\cH$)
perfectly makes sense: there is a.s. no need to start
the pruning procedure from the top of $H$.
\end{prop}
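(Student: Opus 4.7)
The plan is to mimic Remark \ref{mqm} in the continuous setting: find, almost surely, a finite random position on the backbone at which the pruning cascade terminates with a cut of the backbone, reducing the whole procedure to a finite-tree computation below the cut.

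First I would construct a finite candidate cut point. For each subtree $\cT=(T,d,\mu)$ of mass $u$ planted on the backbone at position $y$, the marks $(Z,X)$ of $M$ with $Z\in T$ and $X$ on the backbone $[0,y]$ form a Poisson process with intensity $(u/16)\,dx$, so their total number is Poisson with mean $uy/16$ and the probability that the subtree sends at least one mark to $[0,y]$ is $1-e^{-uy/16}$. Thinning the Poisson measure $N$, the positions $y$ on the backbone at which the planted subtree sends at least one mark to $[0,y]$ form a Poisson point process with intensity
\begin{align*}
r(y)\,dy \;=\; 2\,dy\int_0^\infty\bigl(1-e^{-uy/16}\bigr)\bn(du) \;=\; \sqrt{y/2}\,dy,
\end{align*}
where the computation uses the elementary formula already exploited in (\ref{reutil}) (applied with $q=y/16$). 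Since $\int_0^Y r(y)\,dy = (\sqrt{2}/3)Y^{3/2}\to\infty$, the first such position $Y^*$ is almost surely finite.

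Next I would run a cascade argument exactly parallel to the two cases of Remark \ref{mqm}. Pick any mark $(Z^*,X^*)$ with $Z^*\in\cT^{(Y^*)}$ and $X^*\in[0,Y^*]$ on the backbone. Either $(Z^*,X^*)$ is useful, in which case pruning at $X^*$ severs the backbone at height $X^*\in(0,Y^*]$ and cuts off everything above it; or it is deactivated by some useful $(Z'',X'')$ lying on the branch from $Z^*$ to the root. In the latter case $X''$ lies either on the backbone in $[X^*,Y^*]$, giving a backbone cut at height $\leq Y^*$, or inside the finite subtree $\cT^{(Y^*)}$, in which case the entire chain of deactivations takes place within $\cT^{(Y^*)}$ endowed with its (a.s.~finite) collection of relevant marks of $M$. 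Since $\cT^{(Y^*)}$ is a CRT of finite mass and finite height, $\cP_\infty$ applies to it unambiguously, and one follows the chain until it terminates at a genuinely useful mark; by construction of $Y^*$ at least one mark on the backbone issued from $\cT^{(Y^*)}$ must survive the internal pruning and thus be useful, forcing a backbone cut at a finite height $X_{\mathrm{cut}}\in(0,Y^*]$.

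Finally I would conclude by noting that once the backbone is severed at the finite random height $X_{\mathrm{cut}}$, the portion $H_{\leq X_{\mathrm{cut}}}$ of $H$ below the cut is a real tree of finite height and finite $\nu$-mass, carrying (a.s.) finitely many marks of $M$ relevant for the pruning; $\cP_\infty$ on this finite piece is obviously well defined. Consequently, $\cP_\infty(\cH)$ makes sense without having to start the exploration from $t=\infty$: processing the Poisson marks in decreasing height order from $X_{\mathrm{cut}}$ downwards yields an unambiguous pruned tree. The delicate step is the last clause of the cascade, namely showing that the internal pruning of $\cT^{(Y^*)}$ cannot simultaneously deactivate every one of the (finitely many, but not necessarily one) marks sent by $\cT^{(Y^*)}$ onto the backbone; this requires a careful tree-combinatorial argument inside the finite CRT $\cT^{(Y^*)}$, analogous to the two-line discrete argument of Remark \ref{mqm} but carried out with the Poisson marks of $M$ in place of a single Poisson process per internal node.
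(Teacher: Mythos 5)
Your strategy (find an a.s.\ finite height on the backbone below which a cut is guaranteed) is the right one, and the computation showing $Y^*<\infty$ a.s.\ is correct. However, the cascade step has a genuine gap, not merely a ``delicate step to be carried out more carefully'': the event you condition on is simply too weak to force a backbone cut at or below $Y^*$.

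Concretely, the failure mode is the one your last sentence gestures at, but it is not fixable by a combinatorial argument inside $\cT^{(Y^*)}$. Suppose $\cT^{(Y^*)}$ carries exactly two marks of $M$: one mark $(Z^*,X^*)$ with $X^*$ on the backbone in $(0,Y^*)$, and one internal mark $(Z'',X'')$ with $X''$ on the branch $B_{\cT^{(Y^*)}}(Z^*)$ strictly between the root $Y^*$ and $Z^*$. Then $(Z'',X'')$ cannot be deactivated by anything (any deactivating mark $(Z''',X''')$ would have to satisfy $X'''\in B_H(Z'')$ at height $>X''\geq Y^*$, hence $X'''$ lies inside $\cT^{(Y^*)}$, hence $Z'''$ lies inside $\cT^{(Y^*)}$ as well, but there are no such marks), so $(Z'',X'')$ is useful, removes $Z^*$, and deactivates $(Z^*,X^*)$. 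Thus $\cT^{(Y^*)}$ sends a mark to $[0,Y^*]$ but produces no backbone cut. This configuration has positive conditional probability given $\{\cT^{(Y^*)}$ sends a mark to the backbone$\}$, so your claim ``at least one backbone mark from $\cT^{(Y^*)}$ must survive'' is false with positive probability, and $\cP_\infty(\cH)$ is not yet determined by the data below $Y^*$.

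What the paper does instead is choose a more restrictive event per unit length of backbone. The event $\Omega_a$ requires that there be \emph{exactly two} marks $(z,x)$ of $M$ with $\rho(z),\rho(x)\in[a,a+1]$, both landing on the backbone with the interleaved order $a<X_1<\rho(Z_1)<X_2<\rho(Z_2)<a+1$. The ``exactly two'' clause kills the bad scenario above: it forbids any internal marks inside the trees planted on $[a,a+1]$, so any deactivation of $(Z_2,X_2)$ must come from a useful backbone mark in $(X_2,\rho(Z_2))$, which then also protects $(Z_1,X_1)$ by the interleaving. One then runs Borel--Cantelli over $a\in\nn$ rather than a first-passage construction. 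Your approach could probably be repaired by enriching the definition of $Y^*$ (e.g.\ first $y$ such that $\cT^{(y)}$ sends a mark to the backbone \emph{and} carries no other marks), but as written the argument does not close.
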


\begin{proof}
Let thus $\cH=(H,\delta,\nu)$ be a SSCRT. We call $B$ its backbone.
For $z\in H$, we define $\rho(z)= \arg\min \{\delta(x,z) : x\in B\}$. In other words,
$\rho(z)=z$ if $z\in B$ and $\rho(z)$ is the root of the planted tree to which $z$ belongs otherwise.

\vip

For each $a \in \nn$, consider the subset $[a,a+1]$ of $B$. Let $\Omega_a$ be the event on which
$M(\{(z,x): z \in H, x\in B_H(z), \rho(z)\in [a,a+1], \rho(x)\in [a,a+1]\})=2$, 
and,  denoting by $(Z_1,X_1)$ and $(Z_2,X_2)$ the corresponding marks, 
there holds $X_1\in B$, $X_2\in B$ and $a<X_1<\rho(Z_1)<X_2<\rho(Z_2)<a+1$
(which makes sense since $a,X_1,\rho(Z_1),X_2,\rho(Z_2),a+1$ all belong to $B$).

\vip

First, we claim that on $\Omega_a$, the mark $(Z_1,X_1)$ is a.s. useful, so that 
we can do as if the backbone was ending at $X_1$ and
ignore all the marks $(Z,X)$ of $M$ with $\rho(Z)>X_1$ (i.e. ignore all the marks sent by
trees planted at the right of $X_1$). Indeed, on $\Omega_a$, separate two cases.

\vip

$\bullet$ Either the mark $(Z_2,X_2)$ is useful, so that the backbone will be pruned at $X_2$, 
whence all the marks $(Z',X')$ with $Z'>X_2$ will be deactivated. Furthermore, we know from 
$\Omega_a$ that there are no marks $(Z',X')$ satisfying $X_1<X'<\rho(Z_1)<\rho(Z')<X_2$. Thus
$(Z_1,X_1)$ will indeed be useful.

\vip

$\bullet$ Or the mark at $(Z_2,X_2)$ is useless, so that it has necessarily been deactivated by
an useful mark $(Z_3,X_3)$ with $X_3\in (X_2,\rho(Z_2))$ 
(necessarily sent by a tree planted at the right of $a+1$), 
but then the backbone is pruned at $X_3$ and we conclude as previously that
the mark $(Z_1,X_1)$ will be useful.

\vip

Next, $(\Omega_a)_{a\in \nn}$ is clearly an independent family of events and it holds
that $p:=\Pr(\Omega_a)$ does not depend on $a$. We will check that 
$p>0$ and this will conclude the proof thanks to the Borel-Cantelli Lemma.

\vip

To prove that $p=\Pr(\Omega_0)>0$, we observe that $\Omega_0^1\cap \Omega_0^2\cap\Omega_0^3\subset
\Omega_0$, where

\vip

$\bullet$ $\Omega_0^1$ is the event that $M(A_1)=1$, where 
$A_1:=\{(z,x): z \in H, x\in B_H(z), \rho(z)\in [0,1/2], \rho(x)\in 
[0,1/2]\}$ and that the corresponding mark $(Z_1,X_1)$ satisfies $X_1\in B$;

\vip

$\bullet$ $\Omega_0^2$ is the event that $M(A_2)=1$, where 
$A_2:=\{(z,x): z \in H, x\in B_H(z), \rho(z)\in (1/2,1], \rho(x)\in 
(1/2,1]\}$ and that the corresponding mark $(Z_2,X_2)$ satisfies $X_2\in B$;

\vip

$\bullet$ $\Omega_0^3$ is the event that $M(A_3)=0$, where $A_3:=\{(z,x): z \in H, x\in B_H(z), 
\rho(z)\in (1/2,1], x\in [0,1/2]\}$ (here $[0,1/2]\subset B$).

\vip

Recall that conditionally on $\cH$,
$M$ is a Poisson measure on  $\{(z,x): z \in H, x \in B_H(z)\}$
with intensity measure $m(dz,dx):=(1/16)\nu(dz)\lambda_{B_H(z)}(dx)$. 
Hence, knowing $\cH$, the events
$\Omega_0^1$, $\Omega_0^2$ and  $\Omega_0^3$ are 
independent (because the sets $A_1,A_2,A_3$ are a.s. pairwise disjoint).
Knowing $\cH$, $M(A_1)$ is Poisson$(m(A_1))$-distributed and it 
obviously holds that $m(A_1)\in (0,\infty)$ a.s. (because
$\nu$, restricted to the set of the trees planted on the subset $[0,1/2]$
of the backbone is a.s. a finite and positive measure
and because a.s., $\sup_{z\in H, \rho(z)\in[0,1/2]} |B_H(z)|<\infty$). Hence, $\Pr(M(A_1)=1 \vert \cH)=m(A_1)\exp
(-m(A_1))>0$ a.s. But knowing $\cH$, that $M(A_1)=1$ and knowing $Z_1$ (here we denote by $(Z_1,X_1)$
the mark of $M$ in $A_1$), $X_1$ belongs to $[0,\rho(Z_1))\subset B$ with positive
probability (since $X_1$ is uniformly distributed on the branch $B_{H}(Z_1)$). All this proves that
$\Pr(\Omega_0^1 \vert \cH)>0$ a.s. Similar arguments show that 
$\Pr(\Omega^2_0 \vert \cH)>0$ and $\Pr(\Omega^3_0 \vert \cH)>0$ a.s.,
whence $\Pr(\Omega_0 \vert \cH)>0$ a.s. and thus $p=\Pr(\Omega_0)>0$.
\end{proof}

\subsection{Heuristic scaling limit}\label{ss5}

Consider the pruned SBBCGWT $\hG_n=\cP_n(\hG)$ as in Proposition \ref{treesOK2}, endow its edges with the 
length $n^{-1/3}$, which
induces a distance $\delta_n$ on $\hG_n$. Also introduce the measure
$\nu_n=4 n^{-2/3}\sum_{x\in \hG_n} \delta_x$ on $\hG_n$. Our aim in this paragraph 
is to explain heuristically
why we expect that $(\hG_n,\delta_n,\nu_n)$ converges, as $n \to \infty$, to the 
pruned SSCRT $\cP_\infty(\cH)=(H_\infty,d_\infty,\nu_\infty)$. 

\vip

Consider $p$ very large, endow each edge of $\hG$  with the length $p^{-1/2}$ (which induces
a distance $d_p$ on $\hG$) and set 
$\mu_p=4 p^{-1}\sum_{x \in \hG} \delta_x$. Then we know from
Remark \ref{sl} that $(\hG,d_p,\mu_p)$ resembles the SSCRT $\cH=(H,\delta,\nu)$.
Next consider $n$ very large (with $n >> p^{1/2}$) and the 
pruning procedures $\cP_n$ applied to $\hG$.
Then for each internal node $z\in N(\hG)$, the Poisson process $\pi^n_z$
sends 

\vip

$\bullet$ 0 mark with approximate probability $1-|B_\hG(z)|p^{1/2}/(2n)$, where $|B_\hG(z)|=d_p(\rho,z)$ 
is the length of the branch $B_\hG(z)$ with $\hG$ endowed with $d_p$. Indeed,
in the original scales, we have a Poisson process with rate $1/n$ on the branch $B_\hG(z)$ 
which is composed of $p^{1/2}|B_\hG(z)|$ edges with $\expo(2)$-distributed length and the 
expectation of $\expo(2)$ is $1/2$;

\vip

$\bullet$ one mark with approximate probability $|B_\hG(z)|p^{1/2}/(2n) $ 
(and then this mark is uniformly distributed in $B_\hG(z)$);

\vip

$\bullet$ two or more marks with very small probability.

\vip

Hence we can say roughly that the internal nodes $z\in N(\hG)$ that do send one mark at $x \in B_{\hG}(z)$ are 
chosen according to a Poisson measure on $\{(z,x): z \in N(\hG), 
x \in B_\hG(z)\}$ 
with intensity $m_{p,n}(dx,dz):=\frac {p^{1/2}}{2n} \sum_{y\in N(\hG)} \delta_y(dz)\lambda_{B_\hG(z)}(dx)$.
Since each leave is very close to an internal node and since there are (roughly) as much
leaves as internal nodes in a binary tree, we expect that 
$$
m_{p,n}(dx,dz) \simeq \frac{p^{1/2}}{4n} \sum_{y\in \hG} \delta_y(dz)\lambda_{B_\hG(z)}(dx)
= \frac{p^{3/2}}{16n} \mu_p(dz)\lambda_{B_\hG(z)}(dx).
$$
Thus choosing $p=n^{2/3}$, whence $p^{3/2}/(16n)=1/16$, 
$(\hG,d_{n^{2/3}},\mu_{n^{2/3}})$ resembles 
the SSCRT $\cH=(H,\delta,\nu)$ and the pruning procedure $\cP_n$ of
$(\hG,d_{n^{2/3}},\mu_{n^{2/3}})$
resembles the pruning procedure $\cP_\infty$ of $\cH$.
Calling finally $\delta_n$ the restriction of $d_p$ to $\hG_n$ (which consists in endowing each
edge of $\hG_n$ with a length $p^{1/2}=n^{1/3}$) and $\nu_n$ the restriction of $\mu_p$
to $\hG_n$ (which gives a weight $4/p=4n^{-2/3}$ to each vertex of $\hG_n$), we deduce
that $(\hG_n,\delta_n,\nu_n)$ resembles $\cH_\infty$.

\subsection{The measure of leaves}\label{ss6}

The main result of this section is the following.

\begin{theo}\label{measleaves}
Consider a SSCRT $\cH=(H,\delta,\nu)$ as in Lemma \ref{mieux} and build
$\cP_\infty(\cH)=(H_\infty,\delta_\infty,\nu_\infty)$ as in Definition \ref{defpi} (picking the Poisson measure $M$
used by $\cP_\infty$ conditionally on $\cH$).
Then the law of $\nu_\infty(H_\infty)/8$ is $xc(x)dx$, where $c$ was defined in Theorem
\ref{mr3}.
\end{theo}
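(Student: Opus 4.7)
The plan is to identify the law of $\nu_\infty(H_\infty)/8$ by computing its Laplace transform $F(q) := \E[\exp(-q\nu_\infty(H_\infty)/8)]$ and showing it equals $\ell'(q) = \int_0^\infty e^{-qx}xc(x)\,dx$, with $\ell$ as in Lemma \ref{laplace}. Since any probability measure on $[0,\infty)$ with at most exponential tail is determined by its Laplace transform, this identifies the law as $xc(x)\,dx$.

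First I would decompose $H_\infty$ structurally. By Proposition \ref{piwd}, the pruning of $\cH$ terminates a.s., and its lowest backbone cut $Y_* > 0$ satisfies $H_\infty = [0,Y_*] \cup \bigcup_{y<Y_*} \cP_\infty(T_y)$, where $T_y$ are the CRTs planted on the backbone. Since $\nu$ gives no mass to the backbone, $\nu_\infty(H_\infty) = \sum_{y<Y_*} V_y$ with $V_y := \nu(\cP_\infty(T_y))$. The key point is that each $V_y$ depends only on $T_y$ and independent pruning randomness: as noted in Subsection \ref{ss4}, internal marks of $T_y$ can produce cut points only in $T_y$ itself or on the backbone segment $[0,y]$, never in another $T_{y'}$. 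Consequently the pairs $(y,V_y)$ form a Poisson process on $[0,\infty)\times[0,\infty)$ with intensity $2\,dy\,\theta(dv)$, where $\theta(dv)$ is the law of the pruned mass of CRT$(u)$ with $u\sim \bn(du)$.

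Next I would extract the Brownian-excursion input. Using the scaling of the CRT (distances by $\sqrt u$, measure by $u$), the pruning intensity on CRT$(u)$ equals $u^{3/2}/16$ times the one on CRT$(1)$; in particular, by Lemma \ref{crt}(ii), $\Pr(V_u = u) = \E\exp(-u^{3/2}\bex/16) = \psiex(u^{3/2}/16)$, so $\theta$ (and hence $F$) will eventually be expressible via $\psiex$. After internal pruning, each $T_y^{\mathrm{int}}$ sends a Poisson($V_y/16$) process of marks uniformly on $[0,y]$. The lowest cut $Y_*$ arises from a cascade: starting from $+\infty$, one finds the highest backbone mark, cuts there (deactivating marks whose source-tree sits above that cut), then recurses on the rightmost remaining active mark, until no active marks remain. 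Although the total backbone-mark intensity is infinite in every interval, only finitely many useful cuts occur (this is essentially Proposition \ref{piwd}), so the cascade produces a well-defined finite sequence with terminal value $Y_*$.

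To close the equation I would exploit shift-invariance of the SSCRT: for each $\epsilon > 0$, the restriction $\cH^\epsilon$ of $\cH$ to the backbone $[\epsilon,\infty)$ is an independent copy of $\cH$, and its internal pruning yields a surviving mass $V_\epsilon^{(\infty)} \stackrel{d}{=} \nu_\infty(H_\infty)$ which then sends a Poisson($V_\epsilon^{(\infty)}/16$) of marks to $[0,\epsilon]$. Decomposing according to whether $Y_* < \epsilon$ or $Y_* \geq \epsilon$ and expanding as $\epsilon\to 0$ should produce the Riccati equation
\[
L'(q) = 1 - \frac{q}{|a_1'|} + |a_1'|\,L(q)^2, \qquad L(q) := \int_0^q F(s)\,ds,\qquad L(0)=0.
\]
This is exactly equation (\ref{equadiff}) satisfied by $\ell$, so Cauchy–Lipschitz gives $L = \ell$ and therefore $F = \ell'$, concluding the proof.

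The main obstacle is the middle step: controlling the cascade of useful cuts on the backbone. Because the raw mark intensity on $[a,\infty)$ is a.s.\ infinite, one cannot simply apply a Poisson Laplace functional to a "rightmost mark"; instead, one must verify that the cascade produces only finitely many useful cuts and compute its joint law with $(V_y)_{y<Y_*}$. Equally delicate is tracking the constants: the factor of $8$ must emerge from the combination of the weight $4$ in the definition of $\nu_n$ in Subsection \ref{ss5} together with the $2{:}1$ ratio of vertices to leaves in a binary tree, and the constants $|a_1'|$ and the Airy identities must drop out of the interaction between the shift-invariance equation and the Brownian-excursion identities (\ref{ex1})–(\ref{ex2}) that control $\theta$.
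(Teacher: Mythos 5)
Your high-level strategy matches the paper's: decompose $\nu_\infty(H_\infty)$ into the sum of pruned masses $V_y$ of the trees planted on the surviving backbone, reduce to a one-dimensional Poisson pruning problem on the backbone, compute the Laplace transform, and identify it with $\ell'$. But two of the steps you describe as plausible next moves are precisely where the paper has to do substantial work, and I do not see how your sketch closes them.

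The first gap is the identification of $\theta$ (the paper's $\bnel$). You correctly compute $\Pr(V_u=u)=\psiex(u^{3/2}/16)$ via the CRT scaling and Lemma~\ref{crt}(ii), and then write that ``$\theta$ (and hence $F$) will eventually be expressible via $\psiex$.'' But knowing the atom at $v=u$ of each $m_u$ does not determine the mixed law $\bnel(dv)=\int m_u(dv)\bn(du)$. The paper's Lemma~\ref{nel} pins $\bnel$ down by proving a highly non-obvious fact: that $\Pr(Z=Z_\infty\,|\,Z_\infty)=e^{-\lambda Z_\infty}$ for a deterministic constant $\lambda$ (i.e., the final pruning marks form, conditionally on the pruned tree, a Poisson process on its leaves with intensity proportional to $\mu_\infty$). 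Proving this (Step~3 of Lemma~\ref{nel}) requires the branching property of L\'evy trees in the sense of Duquesne--Le Gall, applied carefully to the \emph{marked} pruned forest, plus an independence and a homogeneity argument (Steps~3c--3d). Only after that, combined with Lemma~\ref{tl1} (which is where $|a_1'|$ enters), does one obtain $\bnel(du)=e^{|a_1'|u/8}\psiex(u^{3/2}/16)\bn(du)$. None of this appears in your sketch; without $\bnel$ explicitly, your Poisson process of $(y,V_y)$ has an unknown intensity and the Laplace computation cannot proceed.

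The second gap is your Step~4. You assert that shift-invariance plus an $\epsilon\to 0$ expansion ``should produce the Riccati equation'' $L'(q)=1-q/|a_1'|+|a_1'|L(q)^2$ for $L(q)=\int_0^q F$. This is where I think the plan actually fails rather than merely being incomplete. The paper does \emph{not} derive a Riccati ODE from the pruning picture. Instead, using translation invariance along the backbone (the same invariance you invoke), it derives the integral equation
\[
1-p(q)=\int_0^\infty p(s)\,h(q,s)\,ds,\qquad
h(q,s)=|a_1'|\bigl[\ell((q+s)/2)-\ell(s/2)\bigr]\frac{\Ai^2((q+s)/2+a_1')}{\Ai^2(q/2+a_1')},
\]
and then uses a separate analytic lemma (Lemma~\ref{equfonc}) to show this has a unique solution and that $\ell'(q/2)$ is one. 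The Riccati equation (\ref{equadiff}) enters only as a tool to verify that $\ell'(q/2)$ satisfies the integral equation; it is an already-known property of $\ell$ coming from the discrete equilibrium equations (Lemma~\ref{laplace}), not something re-derived from the continuum pruning. Your proposed $\epsilon$-expansion, carried out carefully, has to account for (a) the size-biasing of $V_\epsilon^{(\infty)}$ implicit in the event ``no cut in $[0,\epsilon]$'', and (b) the non-trivial distribution of what survives when a cut does occur; a naive first-order expansion in $\epsilon$ produces a \emph{linear} first-order ODE involving $\E[\nu_\infty]$ and $\int u\,\bnel(du)$, not the quadratic term $|a_1'|L(q)^2$. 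The quadratic structure in the paper's functional equation is not local; it comes from the event $\{V<W\}$ and the conditioning in Steps~3--5 of the paper's proof, which involve the pair $(V,W)$ and hence a product of two tails, integrated over the backbone --- not an infinitesimal increment. So even granting $\bnel$, I do not believe your Step~4 closes to a Riccati equation.

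Your proposal also does not say how the uniqueness side of the argument is handled. If you do land on a functional (or differential) equation for $F$ or $L$, you still need a uniqueness statement in the relevant class of functions; the paper builds this into Lemma~\ref{equfonc} via the contraction $\int_0^\infty|h(q,s)|\,ds<1$ and the explicit Airy verification. In a Riccati-ODE framing Cauchy--Lipschitz would indeed handle uniqueness, which is part of why I suspect you are pattern-matching on the known form of (\ref{equadiff}) rather than deriving it. In short: the decomposition and the target identity $F=\ell'$ are right, but the two hard steps --- computing $\bnel$ via the branching property, and deriving the correct (integral, not Riccati) functional equation with its uniqueness companion --- are missing.
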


Adopt the notation of Paragraph \ref{ss5}. Write 
$n^{-2/3}|L(\hG_n)| \simeq n^{-2/3}|\hG_n|/2 = \nu_n(\hG_n)/8$,
which should tend to $\nu_\infty(H_\infty)/8$.
Since $|L(\hG_n)| \sim (kc^n_k)_{k\geq 1}$ by Proposition \ref{treesOK2} and since
$\nu_\infty(H_\infty)/8 \sim xc(x)dx$ by Theorem \ref{measleaves}, 
all this is coherent with Theorem \ref{mr3}.

\vip

We start with two technical lemmas, of which the proof lies at the end of the
section.

\begin{lem}\label{tl1}
Recall (\ref{psiex}). There holds
$$
\int_0^\infty \left(1 - e^{|a_1'|z}\psiex(\sqrt 2 z) \right) \frac{dz}{2\sqrt \pi z^{3/2}}=0.
$$
\end{lem}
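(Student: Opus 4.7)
The plan is to derive this identity as a limiting case of the Laplace-transform formula of Lemma \ref{laplace}. First, I would rewrite the integrand using Lemma \ref{lastlem}: the expression $e^{|a_1'|z}\psiex(\sqrt 2 z^{3/2})/(2\sqrt\pi z^{3/2})$ is nothing but $|a_1'|c(z)$, so the claim is equivalent to
$$
\int_0^\infty \left[\frac{1}{2\sqrt\pi z^{3/2}} - |a_1'|c(z)\right]dz = 0.
$$
A short Taylor expansion of $\psiex$ and of the exponential near $z=0$ shows that the two $z^{-3/2}$ singularities cancel and leave an integrand of order $z^{-1/2}$ at the origin, while the exponential decay $c(z)\sim|a_1'|^{-1}\exp((|a_1'|-|a_1|)z)$ from Theorem \ref{mr3} together with integrability of $z^{-3/2}$ at infinity secures absolute integrability on the whole of $(0,\infty)$.

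Next, I would introduce the auxiliary identity
$$
\int_0^\infty(1-e^{-qx})\frac{dx}{x^{3/2}} = 2\sqrt{\pi q}, \qquad q>0,
$$
which follows from one integration by parts (antiderivative $-2x^{-1/2}$) and the standard value $\int_0^\infty e^{-qx}x^{-1/2}dx=\sqrt{\pi/q}$. Combined with Lemma \ref{laplace}, this yields, for every $q>0$,
$$
|a_1'|\ell(q) - \sqrt q = \int_0^\infty (1-e^{-qx})\left[|a_1'|c(x) - \frac{1}{2\sqrt\pi x^{3/2}}\right]dx.
$$
Since $|1-e^{-qx}|\leq 1$ and the bracketed function is in $L^1(0,\infty)$, dominated convergence as $q\to\infty$ turns the right-hand side into $\int_0^\infty[|a_1'|c(x) - (2\sqrt\pi x^{3/2})^{-1}]dx$.

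It remains to show that the left-hand side tends to $0$. From Lemma \ref{laplace}, $|a_1'|\ell(q) = -\Ai'(q+a_1')/\Ai(q+a_1')$, and I would invoke the classical large-argument asymptotics $\Ai(x)\sim(2\sqrt\pi)^{-1}x^{-1/4}\exp(-2x^{3/2}/3)$ and $\Ai'(x)\sim -(2\sqrt\pi)^{-1}x^{1/4}\exp(-2x^{3/2}/3)$ to deduce $-\Ai'(y)/\Ai(y) = \sqrt y + O(1/y)$ as $y\to\infty$. Setting $y=q+a_1'$ and expanding $\sqrt{q+a_1'} = \sqrt q + O(q^{-1/2})$ then gives $|a_1'|\ell(q) - \sqrt q = O(q^{-1/2}) \to 0$. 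Combining the two limits yields the desired identity after substituting back the formula for $c$ from Lemma \ref{lastlem}. The only slightly delicate point is carrying the Airy asymptotics to sufficient precision so that the $\sqrt q$ divergence cancels cleanly; everything else is routine.
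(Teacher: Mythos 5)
Your proof is correct. Rewriting the integrand as $\tfrac{1}{2\sqrt\pi z^{3/2}}-|a_1'|c(z)$ via Lemma \ref{lastlem} (you have also implicitly corrected the typo $\psiex(\sqrt 2 z)$, which should read $\psiex(\sqrt 2 z^{3/2})$, consistent with Lemma \ref{lastlem} and with the substitution $u=8z$ used later in the proof of Lemma \ref{nel}) brings the claim to the form $\int_0^\infty[\tfrac{1}{2\sqrt\pi z^{3/2}}-|a_1'|c(z)]dz=0$. Your integrability check near $0$ (cancellation of the $z^{-3/2}$ singularities down to $z^{-1/2}$) and at infinity (exponential decay of $c$) is sound, and the identity $\int_0^\infty(1-e^{-qx})x^{-3/2}dx=2\sqrt{\pi q}$ together with Lemma \ref{laplace} and dominated convergence gives the desired integral as $\lim_{q\to\infty}(|a_1'|\ell(q)-\sqrt q)$, which vanishes by $-\Ai'(y)/\Ai(y)=\sqrt y+O(1/y)$.

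The paper reaches the same conclusion with the same ingredients (formula (\ref{for})/(\ref{lala}), the elementary identity (\ref{reutil}), and the Airy asymptotics), but organizes the computation differently: it splits $I=K+L$ with $K$ the $e^{|a_1'|z}$-part and $L$ the $1-\psiex$-part, evaluates $K=\Ai'(0)/\Ai(0)$ by extending (\ref{for}) analytically to the negative value $q=a_1'$ and using $\Ai'(a_1')=0$, and then handles $L$ as a $q\to\infty$ limit. Your version avoids the analytic-continuation step entirely by working with the difference of the two Laplace transforms directly at positive $q$ and passing to the limit once; the price is that you have to make the dominated-convergence argument explicit, but this is no harder, and the resulting proof is arguably a little cleaner. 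Both routes depend in the same essential way on the precision of the Airy asymptotic: the $\sqrt q$ terms must cancel exactly, which they do since the error is $O(q^{-1/2})$.
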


\begin{lem}\label{equfonc}
Let $p:[0,\infty)\mapsto (0,1]$ be continuous, decreasing, satisfy $p(0)=1$ and
$\lim_\infty p=0$. Assume that for all $q>0$,
\begin{align}\label{ef}
1-p(q)=\int_0^\infty p(s) h(q,s)ds,
\end{align}   
where, the function $\ell$ being defined by (\ref{lala}),
$$
h(q,s)= |a_1'|[\ell((q+s)/2)- \ell(s/2)]\frac{\Ai^2((q+s)/2+a_1')}{\Ai^2(q/2+a_1')}.
$$
Then for all $q\in[0,\infty)$, $p(q)=\ell'(q/2)$.
\end{lem}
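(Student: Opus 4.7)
The plan is to reduce the integral equation to a linear first-order ODE for $p$ by differentiation, and then invoke Cauchy--Lipschitz uniqueness. I claim that any solution $p$ in the prescribed class satisfies
\begin{align*}
p'(q) = -\frac{1}{2|a_1'|} + |a_1'|\ell(q/2)\,p(q), \qquad p(0) = 1.
\end{align*}
This ODE has smooth coefficients on $[0,\infty)$, hence a unique solution, and one checks directly that $\ell'(q/2)$ is it: differentiating the Riccati equation (\ref{equadiff}) yields $\ell''(r) = -1/|a_1'| + 2|a_1'|\ell(r)\ell'(r)$, which at $r=q/2$ is exactly our ODE, while the initial condition follows from $\ell'(0) = \int_0^\infty xc(x)\,dx = 1$ (Theorem \ref{mr3}).

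To derive the ODE from the functional equation, I would rewrite it with $P(Q) := p(2Q)$, $Q = q/2$, and $A(v) := \Ai(v+a_1')$ as
\begin{align*}
A^2(Q)\,[1-P(Q)] = 2|a_1'|\int_0^\infty P(u)\,[\ell(Q+u)-\ell(u)]\,A^2(Q+u)\,du,
\end{align*}
and differentiate in $Q$. The key algebraic input is the identity
\begin{align*}
\ell'(v) A^2(v) = |a_1'|^{-1}\Theta(v), \qquad \Theta(v) := \int_v^\infty A^2(w)\,dw,
\end{align*}
which follows from the Riccati equation, the relation $|a_1'|\ell(v) = -A'(v)/A(v)$, and the classical Airy identity $(\Ai'(x))^2 - x\Ai^2(x) = \int_x^\infty \Ai^2(t)\,dt$ (itself obtained by noting both sides have derivative $-\Ai^2(x)$ via $\Ai''=x\Ai$ and vanish at $+\infty$). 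Writing $\partial_Q = \partial_u + (\text{a correction involving }\ell'(u))$, integrating by parts in $u$, and using this identity together with $A^2 = -\Theta'$ and the Airy ODE, the boundary term at $u=0$ contributes $-\ell(Q)A^2(Q)$, which after dividing by $A^2(Q)$ and recognizing $-A'/A = |a_1'|\ell$ produces the term $2|a_1'|\ell(q/2)\,p(q)$ of the ODE, while the remaining integrals can be recombined with the original equation to leave only the constant $-\frac{1}{2|a_1'|}$.

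The main obstacle will be the bookkeeping for the derivation of the ODE: the manipulation involves several integration by parts, vanishing of boundary terms at infinity (ensured by the superexponential decay of $\Ai$ and the control on $p$ coming from $p\in(0,1]$), and a careful use of the identity $\ell'A^2 = \Theta/|a_1'|$ to absorb the surviving integrals back into the original equation. The probabilistic viewpoint explains why such a telescoping can succeed: up to a constant, $\Theta(v)$ is the expected leaf-mass above level $v$ of the unpruned SSCRT $\cH$ described in Section \ref{proba2}, so the Airy-squared ratio in $h(q,s)$ is not ad hoc but encodes the natural transition kernel as the tree is explored from the top.
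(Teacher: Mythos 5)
Your candidate $p(q)=\ell'(q/2)$ is correct, and the supporting algebra you invoke is sound: the Riccati identity $\ell'(r)=1-r/|a_1'|+|a_1'|\ell^2(r)$ does imply $\ell''(r)=-1/|a_1'|+2|a_1'|\ell(r)\ell'(r)$, the Airy identity $(\Ai'(x))^2-x\Ai^2(x)=\int_x^\infty\Ai^2$ gives $\ell'(v)A^2(v)=\Theta(v)/|a_1'|$, and $\ell'(0)=1$ follows from Theorem \ref{mr3}. However, the central step of your plan --- extracting a \emph{closed first-order ODE} for $p$ by differentiating the integral equation --- is asserted rather than carried out, and on inspection it does not close. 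Writing $B=A^2$, $K(Q,u)=2|a_1'|[\ell(Q+u)-\ell(u)]B(Q+u)$, using $\partial_Q K=\partial_u K+2|a_1'|\ell'(u)B(Q+u)$ and integrating by parts (the route you sketch), one finds after cancelling the $B'(Q)$ terms
\begin{align*}
2|a_1'|\ell(Q)B(Q)P(Q)-B(Q)P'(Q)=-\int_0^\infty P'(u)K(Q,u)\,du+2|a_1'|\int_0^\infty P(u)\ell'(u)B(Q+u)\,du,
\end{align*}
with $P(Q)=p(2Q)$. The right-hand side still carries $P'$ under an integral, and the second integral's kernel $\ell'(u)B(Q+u)$ is not proportional to $K(Q,u)$, so there is no way to ``recombine with the original equation'' and be left with a constant; integrating the $P'$-term by parts merely reverses the previous step. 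In other words the differentiated relation is still a genuinely nonlocal integral equation, not an ODE, and the Cauchy--Lipschitz uniqueness you planned to invoke has nothing to apply to. (There is also a secondary issue you would need to address: a priori $p$ is only continuous, so differentiating (\ref{ef}) in $q$ requires a separate justification of differentiability and of the vanishing of boundary terms at $u=\infty$.)

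The paper avoids this altogether. Uniqueness is obtained directly from the observation that $\kappa(q):=\int_0^\infty|h(q,s)|\,ds<1$ for every $q>0$ (a one-line computation from $h\ge0$ and $|a_1'|\ell(v)A^2(v)=-A(v)A'(v)$), combined with the boundary behaviour $p_1(0)-p_2(0)=0$ and $\lim_\infty(p_1-p_2)=0$: the sup of $|p_1-p_2|$ is attained at an interior $q_0$ and is then $\le\kappa(q_0)\|p_1-p_2\|_\infty$, forcing $p_1=p_2$. Existence is then a direct verification that $\ell'(q/2)$ satisfies (\ref{ef}), carried out by splitting the integral, integrating by parts, and repeatedly using $\Ai''=x\Ai$ and (\ref{equadiff}) --- no differentiation of the unknown $p$ is needed. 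If you want to keep an ODE-flavoured argument, the cleanest repair is to replace your step (a) by the paper's contraction estimate; your verification that $\ell'(q/2)$ solves the associated Riccati-derived ODE can then be discarded in favour of checking (\ref{ef}) directly.
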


Next we compute, in some sense, the mass of the pruned planted trees.

\begin{lem}\label{nel}
For each $u>0$, denote by $m_u$ the law of the mass of the pruned CRT$(u)$.
In other words, pick $\cT_u=(T_u,d_u,\mu_u)$ a CRT$(u)$, prune it as in Definition \ref{defpi},
set $\cP_\infty(\cT_u)=(T_{u,\infty},d_{u,\infty},\mu_{u,\infty})$: $m_u$ is the law of $\mu_{u,\infty}(T_{u,\infty})$.
Define the measure $\bnel$ on $(0,\infty)$
by
$$
\bnel(A)=\int_0^\infty m_u(A) \bn(du),
$$
where $\bn(du)=du/\sqrt{2\pi u^3}$ as in Definition \ref{mieux}. It holds that
$$
\bnel(du)= e^{|a_1'|u/8} \psi_{ex}(u^{3/2}/16) \bn(du)
$$
where $\psi_{ex}$ was defined in (\ref{psiex}).
\end{lem}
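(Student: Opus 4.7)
The plan is to compute the Laplace exponent $F(q) := \int_0^\infty (1-\E[e^{-qV_u}])\,\bn(du)$ of the measure $\bnel$ and identify it with $-\frac{\Ai'(8q+a_1')}{2\Ai(8q+a_1')}$. A direct change of variables $v=8x$ combined with the explicit form of $c$ from Theorem \ref{mr3} and the definition (\ref{lala}) of $\ell$ shows that $\int_0^\infty (1-e^{-qv})e^{|a_1'|v/8}\psiex(v^{3/2}/16)\bn(dv) = \frac{|a_1'|}{2}\ell(8q) = -\frac{\Ai'(8q+a_1')}{2\Ai(8q+a_1')}$, so matching this with $F(q)$ yields the claimed equality of measures, by uniqueness of Laplace transforms on $\sigma$-finite measures satisfying $\int(1\wedge v)\bnel(dv)<\infty$.

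First, I would isolate the atom of $m_u$ at $v=u$. Conditionally on the Brownian excursion $\be^u$ encoding $\cT_u$, the Poisson marks used by $\cP_\infty$ have total conditional intensity $(1/16)\int_0^u \be^u_s\,ds = u^{3/2}\bex/16$ by Lemma \ref{crt}-(ii), so $\Pr(V_u=u\mid\be^u)=\exp(-u^{3/2}\bex/16)$ and hence $\Pr(V_u=u)=\psiex(u^{3/2}/16)$. This atom contributes exactly $\psiex(u^{3/2}/16)\bn(du)$ to $\bnel$, and the continuous part of $m_u$ must therefore produce the extra multiplicative factor $e^{|a_1'|u/8}-1$ after integration against $\bn$.

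For the continuous part, I would exploit the self-similarity $V_u\stackrel{d}{=} uV_1^{u^{3/2}/16}$ (since pruning $\cT_u$ at intensity $1/16$ is, after the canonical rescaling of the CRT by $u$, equivalent to pruning $\cT_1$ at intensity $u^{3/2}/16$) to reduce to a one-parameter family $\Psi(\lambda,q):=\E[e^{-qV_1^\lambda}]$. The recursive branching structure of $\cT_1$---in which the first useful pruning event cuts off an excursion of $\be^1$ above a random height and leaves two conditionally independent subtrees grafted on---should yield a functional equation for $\Psi$; Lemma \ref{equfonc} can then be used to identify the relevant integrated quantity with something tied to $\ell'$. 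Integrating back against $\bn(du)$ via $\lambda=u^{3/2}/16$ then produces $F(q)=\frac{|a_1'|}{2}\ell(8q)$.

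The main obstacle is the derivation of the functional equation for $\Psi$: the pruning is genuinely recursive, as high-level marks deactivate lower-level ones, so $V_1^\lambda$ cannot be written as a simple independent thinning of $\be^1$. The cleanest route is probably via the two-sided Bessel$(3)$ representation of the SSCRT from Lemma \ref{concon}, which plants $\cT_u$ on a Brownian excursion of length $u$ distributed as $\bn(du)$ and lets the It\^o excursion theory realize the $\bn$-integral as an expectation under a reflected Brownian motion, converting the recursive pruning into a tractable excursion-measure computation. Once the equation is in hand, Lemma \ref{equfonc} performs the identification and the final matching with $-\Ai'(8q+a_1')/(2\Ai(8q+a_1'))$ becomes analytic, using the Riccati identity (\ref{equadiff}) for $\ell$ together, where the atomic and continuous contributions need to be reconciled, with the cancellation identity of Lemma \ref{tl1}.
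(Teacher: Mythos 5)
Your identification of the atom of $m_u$ at $v=u$ is correct: conditionally on the excursion $\be^u$, no pruning occurs with probability $\exp(-\int_{T_u}\delta(\rho,x)\mu_u(dx)/16) = \exp(-u^{3/2}\bex/16)$, giving $\Pr(V_u=u)=\psiex(u^{3/2}/16)$. This matches the paper's Step 2. Your change of variables showing that the candidate measure has Laplace exponent $\frac{|a_1'|}{2}\ell(8q)$ is also right. But the heart of the proof — handling the event that pruning does occur — is where your plan breaks down, and you have partly noticed it yourself.

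The functional-equation route for $\Psi(\lambda,q):=\E[e^{-qV_1^\lambda}]$ has two problems. First, as the paper remarks immediately after stating the lemma, one cannot compute $m_u$ (equivalently $\Psi$) for an individual $u$ (or $\lambda$); the whole point of working with $\bnel(du)=\int m_u(\cdot)\bn(du)$ is that only the $\bn$-mixture is tractable. Trying to obtain $\Psi$ first and integrate afterwards is fighting the structure of the problem. Second, Lemma \ref{equfonc} cannot be repurposed for this: its kernel $h(q,s)$ encodes the geometry of pruning along the SSCRT \emph{backbone} (each translate of the Airy term tracks the intensity accumulated over the trees planted to the right of a candidate cut-point), and its solution $p(q)=\ell'(q/2)$ is the Laplace transform of the \emph{total} pruned SSCRT mass, not of a single pruned excursion. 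There is no analogous equation for $\Psi$ in the paper, and the "first useful pruning event leaves two conditionally independent grafted subtrees" picture you propose does not hold — pruning is top-down and non-Markovian in the height because deactivations propagate downward through the whole subtree that was removed.

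The paper's mechanism is genuinely different and sidesteps exactly this obstruction. It considers the \emph{forest} of CRTs planted on $[0,1]$ at intensity $\bn(du)$, sets $Z=\sum u_x$ and $Z_\infty = \sum \mu_{x,\infty}(T_{x,\infty})$, and computes $\E[e^{-rZ}\indiq_{\{Z=Z_\infty\}}]$ in two ways. The first way uses your atom computation. The second is the key missing idea: conditionally on the pruned forest $\cT_\infty$, the set $A_\infty$ of final useful cut-points is (essentially) a Poisson process on $L(T_\infty)$ with intensity proportional to $\mu_\infty$, so $\Pr(Z=Z_\infty\mid Z_\infty)=e^{-\lambda Z_\infty}$. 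Establishing this requires the branching property of Lévy trees at a fixed height (Duquesne--Le Gall) applied to the marked pruned forest, plus a homogeneity-in-height argument; that is precisely the delicate Step 3 (3a--3e) of the paper and there is no shortcut through Lemma \ref{equfonc}. Once that exponential identity is in hand, matching Laplace transforms and pinning down $\lambda$ via Lemma \ref{tl1} (which is where that lemma is actually used) gives the result. You should replace your Step 3 plan by this conditional-Poisson argument; Lemma \ref{equfonc} belongs only to the later proof of Theorem \ref{measleaves}, where the backbone pruning is in play.
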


As will be clear from the proof, we are not able to compute the law $m_u$
for a given value of $u$. We really need to choose $u$ according to the measure $\bn(du)$.

\begin{proof} We introduce a Poisson measure $N(dx,du,d\cT)$ on $[0,1]\times(0,\infty)\times \tT$ 
with intensity measure $dx \bn(du) Q_u(d\cT)$, we denote by $\{(x,u_x,\cT_x)\}_{x \in \cI}$ the set of its marks,
where $\cT_x=(T_x,d_x,\mu_x)$ with root $\rho_x$. Recall that for each $x\in \cI$,
$\mu_x(T_x)=u_x$ by Lemma \ref{crt}. 
Conditionally on $N$, we prune $\cT_x$ for each $x\in \cI$ independently, using $\cP_\infty$ with 
a Poisson measure $M_x$ on $\{(z,y) : z \in T_x, y \in B_{T_x}(z)\}$ with intensity measure 
$(1/16)\mu_x(dz)\lambda_{B_{T_x}(z)}(dy)$
and denote by $\cT_{x,\infty}=(T_{x,\infty},d_{x,\infty},\mu_{x,\infty})$ the resulting tree.
Then we consider 
\begin{align*}
Z:=&\sum_{x\in\cI}u_x=\int_0^1 \int_0^\infty\int_\tT u N(dx,du,d\cT),\\
Z_\infty:=&\sum_{x\in\cI} \mu_{x,\infty}(T_{x,\infty}).
\end{align*}
\vip

{\it Step 1.} Here we prove that for any $r\in (-\infty,\infty)$,
$$
\E\left(e^{-r Z_\infty} \right)= \exp\left(-\int_0^\infty \left(1-e^{-ru}\right)\bnel(du) \right) \in [0,\infty].
$$
Conditionally on $\{(x,u_x), x \in \cI \}$, the random variables $\mu_{x,\infty}(T_{x,\infty})$ 
(for $x\in \cI$) are independent and  $m_{u_x}$-distributed, so that
\begin{align*}
&\E\left(e^{-r Z_\infty} | \{(x,u_x)\}_{x \in \cI} \right)= \prod_{x\in\cI} \int_0^\infty e^{-r v} m_{u_x}(dv)\\
=&\exp\left( \int_0^1\int_0^\infty\int_\tT \left[\log \int_0^\infty e^{-rv}m_u(dv)\right]N(dx,du,d\cT)  \right).
\end{align*}
Taking expectations, we get
\begin{align*}
\E\left(e^{-r Z_\infty}\right)=&\exp\left(\int_0^1 dx \int_0^\infty \bn(du) \int_\tT Q_u(d\cT) 
\left(e^{\log \int_0^\infty e^{-rv}m_u(dv)} -1\right) \right)\\
=&\exp\left(\int_0^\infty \bn(du)\left(\int_0^\infty e^{-rv}m_u(dv) -1 \right) \right)\\
=&\exp\left(\int_0^\infty \bn(du) \int_0^\infty m_u(dv) \left(e^{-rv} -1\right) \right).
\end{align*}
We have finished, since $\bnel(dv)=\int_0^\infty \bn(du)m_u(dv)$ by definition.

\vip

{\it Step 2.} The aim of this step is to prove that 
$$
\Pr\left(\left. Z=Z_\infty  \right| \{(x,u_x)\}_{x \in \cI} \right) = \exp\left(    
\int_0^1\int_0^\infty\int_\tT \log \psiex( u^{3/2} / 16 ) N(dx,du,d\cT)
\right).
$$
We have $Z=Z_\infty$ iff $\cT_{x,\infty}=\cT_x$ for all $x\in\cI$. Conditionally
on $N$, the events $\{\cT_{x,\infty}=\cT_x\}_{x \in \cI}$ are independent. Furthermore,
$\{\cT_{x,\infty}=\cT_x\}=\{M_x$ has no mark$\}$, which occurs with probability
$\exp(- \int_{T_x} \mu_x(dz)  d_x(z,\rho_x)/16)$, because for each $z\in T_x$, the length 
of $B_{T_x}(z)$ is $d_x(z,\rho_x)$.
But knowing $u_x$, $\cT_x$ is a CRT$(u_x)$, whence 
by Lemma \ref{crt},  $\int_{T_x} d_x(z,\rho_x)\mu_x(dz)$ can be written as
$(u_x)^{3/2}\int_0^1 \be_s ds$, for some normalized Brownian excursion. We deduce that 
$\Pr(\cT_{x,\infty}=\cT_x | u_x)=\psiex(u_x^{3/2}/16)$. Finally, by independence,
$\Pr(Z=Z_\infty | \{(x,u_x)\}_{x \in \cI})= \prod_{x\in\cI}\psiex((u_x)^{3/2}/16)$, which ends the step.

\vip

{\it Step 3.} Next we prove that there is $\lambda>0$ such that
$$
\Pr\left(\left. Z=Z_\infty  \right| Z_\infty \right) = \exp\left(- \lambda Z_\infty \right).
$$
This is the most delicate part of the proof.

\vip

{\it Step 3a: Branching property for the forest of CRTs.} Here we recall some results found in 
Duquesne-Le Gall \cite[Section 4.2]{dlg2} (in the more general case of L\'evy trees).
First, we denote by $T=\cup_{x\in \cI} T_x$ the forest of CRTs, endowed with the distance $d$ and measure
$\mu$ induced by $(d_x,\mu_x)_{x\in \cI}$, and by $\cT$ the corresponding measured forest 
(with $d(y,y')=\infty$ if $y\in T_x$
and $y'\in T_{x'}$ for some $x\ne x'$). For $y\in T$, define the height of $y$ as
$h(y)=d(y,\rho_x)$, where $\rho_x$ is the root of the tree $T_x$ to which $y$ belongs.
For $a\geq 0$, we introduce $T(a)=\{y \in T : h(y)=a\}$, by $T([0,a])=\{y \in T : h(y)\leq a\}$, 
by $\cT([0,a])$ the corresponding measured forest, and we define similarly $T([a,\infty))$
and $\cT([a,\infty))$.

For $a\geq 0$, we define the point measure $\cN_a$ on $T(a)\times \tT$ as
$\cN_a:= \sum_{i\in \cJ_a} \delta_{(x_i^a,\cS_i^a)}$, where $(\cS_i^a)_{i \in \cJ_a}\in\tT$ are the closures of the 
connected components of $T \setminus T([0,a])$ endowed with the inherited distances and measures 
and $x_i^a$ is the root of $\cS_i^a$.

We also introduce the $\sigma$-finite measure $\bQ$ on $\tT$
defined by $\bQ(.)=\int_0^\infty \bn(du)Q_u(.)$. 

Then, by \cite{dlg2}, there a.s. exists, for each $a\geq 0$, a finite measure $k^a$ on $T$ 
such that the following points hold:

(i) $k^0=0$ and for all $a>0$, supp $k^a\subset T(a)$,

(ii) for all $a>0$, conditionally on $\cT([0,a])$, the point measure $\cN_a$ is Poisson with intensity
$k^a(dx)\bQ(dS)$,

(iii) $\mu(.)=\int_0^\infty k^a(.)da$.

\vip

{\it Step 3b: Branching property for the marked forest of pruned CRTs.} First, we call
$\widehat \tT$ the set of marked measured real trees: its elements are of the form
$(\cS,A)$ for $\cS=(S,d_S,\mu_S)\in\tT$ and for $A$ a finite subset of $L(S)$.
Define the pruned forest $\cT_\infty=(T_\infty,d_\infty,\mu_\infty)$ with $T_\infty=\cup_{x\in \cI}T_{x,\infty}$
similarly as in Step 3a. 
When pruning $\cT$ to build $\cT_\infty$, we keep track of the final useful marks
and denote by $A_\infty\subset L(T_\infty)$ the leaves of $\cT_\infty$ where a pruning has occurred (i.e.
$A_\infty = L(T_\infty) \setminus L(T)$). We put $\widehat \cT_\infty=(\cT_\infty,A_\infty)$,
which is a forest of elements of $\widehat \tT$.
We also define $T_\infty(a)$, $T_\infty([0,a])$, $\cT_\infty([0,a])$,  
$T_\infty([a,\infty))$, $\cT_\infty([a,\infty))$ for $a\geq 0$ as in Step 3a.
We denote by $\widehat \cT_\infty([0,a])=(\cT_\infty([0,a]),A_\infty\cap T_\infty([0,a]))$ and define
similarly $\widehat \cT_\infty([a,\infty))$.

For $a\geq 0$, we define the measure  $\widehat \cN_{a,\infty}
:= \sum_{i\in \cJ_{a,\infty}} \delta_{(x_{i,\infty}^a,\widehat \cS_{i,\infty}^a)}$ on $T_\infty(a)\times \widehat \tT$,
where
$(\widehat \cS_{i,\infty}^a)_{i \in \cJ_a}$ are the (marked and measured) closures of the 
connected components of $T_\infty \setminus T_\infty([0,a])$ together with $(\{y\},0,0,\{y\})$ (the tree
$\{y\}$ endowed with the distance $0$, the measure $0$ and with one mark at $y$) 
for $y\in A_\infty \cap T_\infty(a)$
(which is a.s. of cardinality $0$ or $1$), and 
where $x_{i,\infty}^a$ is the root of
$\widehat \cS_{i,\infty}^a$.

We finally introduce the measure
$\widehat \bQ_\infty(.)=\int_0^\infty \bn(du) \widehat Q_{u,\infty}(.)$ on $\widehat \tT$, where 
$\widehat Q_{u,\infty}$
is the law of the marked pruned CRT$(u)$ (that is, consider a CRT$(u)$, prune
it, keep track of the useful marks, this gives a marked measured tree, with law $\widehat Q_{u,\infty}$).

The aim of this step is to check that, denoting, for each $a\geq 0$, by $k^a_\infty$
the restriction of $k^a$ (which is a measure on $T$) to $T_\infty$ (which is a subset of $T$),
the following points hold:

(i) $k^0_\infty=0$ and for all $a>0$, supp $k^a_\infty \subset T_\infty(a)$,

(ii) for all $a>0$, conditionally on $\widehat \cT_\infty([0,a])$, the point measure 
$\widehat \cN_{a,\infty}$ is Poisson with intensity $k^a_\infty(dx)e^{-a \mu_S(S)/16}\widehat\bQ_\infty(d \widehat\cS)$
(with $\widehat \cS=(S,d_S,\mu_s,A_S)\in \widehat \tT$),

(iii) $\mu_\infty(.)=\int_0^\infty k^a_\infty(.)da$.

\vip
First, points (i) and (iii) are directly inherited from from Step 3a.
To prove point (ii), we fix the height $a$. 
We are going to decompose the pruning procedure according 
to the decomposition of the tree $T$ (at height $a$) introduced at Step 3a. 
Consider the Poisson measure $M=\sum_{x\in\cI}M_x$ on 
$\{ (z,y) : z \in T, y \in B_{T}(z)\}$ (recall from the first paragraph of the proof that the measure 
$M_x$ is used to prune $T_x$, so that for $x\ne y$, $M_x$ and $M_y$ have disjoint supports).

Given  $\cT$, we write $M=M^a + \sum_{i \in \cJ_a}( \overline{M}^{a}_i + \underline{M}^{a}_i)$ where 
$M^a$ is supported by $\{(z,y) : z \in T([0,a]), y \in B_T(z)\}$ while, for all $i \in \cJ_a$,  
$\overline{M}^{a}_i$  is supported by $\{ (z,y) : z \in S_i^a, y \in B_{T}(z), h(y) \geq a \}$ 
and $\underline{M}^{a}_i$  is supported  by 
$\{ (z,y) : z \in S_i^a, y \in B_{T}(z), h(y)< a \}=\{ (z,y) : z \in S_i^a, y \in B_{T}(x_i^a)\}$.  

We start the pruning procedure of $T$ by $M$ (from the top of $T$). Clearly,
we first prune each subtree $\cS_i^a$ (for $i$ in $\cJ_a$) using only $\overline{M}_i^a$. 
Conditionally on $\cT([0,a])$, the marked pruned tree $\widehat \cS^a_{i,\infty}$ we obtain is
measurable with respect to the $\sigma$-fields generated by $\cS^a_i$ and $\overline{M}_i^a$. 
It now follows from the branching property (Step 3a, point (ii)) and from  the definition of 
$\widehat{\bQ}_\infty$ that, conditionally on $\cT([0,a])$, the point measure  
$\sum_{i\in \cJ_a} \delta_{(x_i^a,\widehat{\cS}_{i,\infty}^a)}$ is Poisson with intensity 
$k^a(dx) \widehat\bQ_\infty(d \widehat\cS)$. 

For all $i \in \cJ_a$, we consider the event on which all the marks sent by
the leaves of $S_i^a$ ``under $a$'' have been deactivated by the pruning procedure ``above $a$'', i.e.
$$
\Omega^a_i = \{\underline{M}_i^a (\{ (z,y) : z \in S_{i,\infty}^a, 
y \in B_{T}(z), h(y)<a\}) = 0\}.
$$
This event is measurable with respect to (the $\sigma$-field generated by) $\cS^a_{i}$, 
$\underline{M}^a_{i}$ and $\overline{M}^a_{i}$. 
For all $i \in \cJ_a$, we have $P(\Omega^a_i \, | \, \cT([0,a]), \cS^a_{i,\infty}) 
= e^{-a \mu_\infty(S^a_{i,\infty})/16}$ (here we used that $a \mu(S^a_{i,\infty})/16$ 
is the integral of the intensity of $\underline{M}_i^a$).
It follows that, conditionally on $ \cT([0,a])$, the point measure 
$\widehat{\cN}_{a,\infty}^0:= \sum_{i\in \cJ_a} \indiq_{\Omega^a_i}\delta_{(x_i^a,\widehat{\cS}_{i,\infty}^a)}$
is Poisson with intensity $k^a(dx)  e^{-a \mu_S(S)/16} \widehat{\bQ}_\infty(d\widehat{\cS})$.

On the one hand,  $\widehat{\cT}_\infty([0,a])$ is measurable with respect to  $\cT([0,a])$ and to the
(useful) marks of $M$ falling under $a$ (i.e. the useful marks $(z,y)$ with $h(y)<a$). 
On the other hand, for all $i \in \cJ_a$, the event
$\Omega_i^a$ guarantees us that $\underline{M}_i ^a$ has no (useful) mark under height $a$. Thus 
$\widehat{\cT}_\infty([0,a])$ 
is, conditionally on $\cT([0,a])$, 
independent on $\widehat{\cN}_{a,\infty}^0$. Consequently, 
the restriction $\widehat{\cN}_{a,\infty}^1:=\sum_{i\in \cJ_a} \indiq_{\Omega^a_i}\indiq_{\{x_i^a \in {\rm supp} \;k^a_\infty\}}
\delta_{(x_i^a,\widehat{\cS}_{i,\infty}^a)}$ of $\widehat{\cN}_{a,\infty}^0$ to the 
support of $k^a_{\infty}$ is, conditionally on $\widehat{\cT}_\infty([0,a])$, Poisson with intensity 
$k^a_\infty(dx)  e^{-a \mu_S(S)/16} \widehat{\bQ}_\infty(d\widehat{\cS})$. 
 
We further observe that if $i \in \cJ_{a,\infty}$, then $\Omega^a_i$ is fulfilled: if $S_{i,\infty}^a$
was sending a mark below $a$, then either this mark would be useful and remove 
$S_{i,\infty}^a$ or this mark would not be useful and would be removed, together with 
$S_{i,\infty}^a$, by another useful mark.
This shows that $\widehat{\cN}_{a,\infty}$ is nothing but $\widehat{\cN}_{a,\infty}^1$.
Point (ii) follows. 
 
\vip

{\it Step 3c: Independence.} Here we prove that for any $a\geq 0$, $A_\infty \cap T_\infty([0,a])$
is independent of $A_\infty \cap T_\infty([a,\infty))$ conditionally on $\cT_\infty$.
This means that conditionally on the (not marked) pruned tree $\cT_\infty$, the marks above $a$
are independent of the marks under $a$.

We fix $a\geq 0$ and we consider the point measure $\cN^2_{a,\infty}
:= \sum_{i\in \cJ_{a,\infty}} \delta_{(x_{i,\infty}^a,\cS_{i,\infty}^a, \cS_{i}^a)}$ on $T_\infty(a)\times \tT \times \tT$. 
The same argument as in Step 3b shows that, conditionally on  $\widehat{\cT}_\infty([0,a])$, it is a
Poisson measure with  intensity  $k^a_\infty(dx)  \bR(d\cS,d\cT)e^{-a \mu_{T}(T)}$, where 
$\bR(.)=\int_0^\infty \bn(du) R_u(.)$, for $R_u$  the joint law of $(\cS_\infty,\cS)$, where $\cS$
is a CRT$(u)$ and $\cS_\infty$ is the resulting pruned tree.

Conditionally on $\widehat{\cT}_\infty([0,a])$ and  $\cT_\infty$,  $(x_{i,\infty}^a,\cS_{i,\infty}^a)_{i\in\cJ_{a,\infty}}$
is completely determined. For each 
$i \in \cJ_{a,\infty}$, the conditional law of $\cS_i^a$ knowing $\widehat{\cT}_\infty([0,a])$ and $\cT_\infty$
depends only on $\cS_{i,\infty}^a$ and in particular does not depend on $A_\infty \cap T_\infty([0,a])$.
Since finally $A_\infty \cap T_\infty([a, \infty])= \cup_{i \in \cJ_{a,\infty}} L(\cS_{i,\infty}^a)
\setminus L(\cS_{i}^a)$, the claim follows.

\vip

{\it Step 3d: Homogeneity.} The aim of this step is to check that for $0\leq a<b$, conditionally
on $\cT_\infty$ and knowing that $\#(A_\infty \cap(T_\infty(a)\cup T_\infty(b)))=1$, 
we have $\#(A_\infty\cap T_\infty(a))=1$ with probability
$k^{a}_\infty(T_\infty)/[k^{a}_\infty(T_\infty)+k^{b}_\infty(T_\infty)]$.

We consider two independent copies $\widehat{\cT}_\infty$ and $\widehat{\cT}'_\infty$. We are going 
to apply the branching property at height $a$ for the first one and at height $b$ for the second one. 
We define the point measure 
$$ 
\widehat \cN_{a,b,\infty} := \sum_{i\in \cJ_{a,\infty}} \delta_{(x_{i,\infty}^a,\widehat \cS_{i,\infty}^a)} 
+ \sum_{i\in \cJ^{'}_{b,\infty}} \delta_{({x}_{i,\infty}^{'b},\widehat {\cS}_{i,\infty}^{'b})}
$$ 
on 
$(T_\infty(a) \cup T'_\infty(b)) \times \widehat \tT$. Independence 
(between $\widehat{\cT}_\infty$ and $\widehat{\cT}'_\infty$) and the branching 
property (Step 3b, point (ii)) shows that, conditionally on $\widehat{\cT}_\infty([0,a])$ 
and $\widehat{\cT}'_\infty([0,b])$, the measure $\widehat \cN_{a,b,\infty}$ is Poisson with intensity 
measure
$$
(k^a_\infty (dx) e^{-a\mu_S(S)} + k^{'b}_\infty (dx) e^{-b\mu_S(S)}  ) \widehat{\bQ}_\infty (d\widehat{\cS}).
$$

Consider the trivial marked tree $\widehat{\tau}= (\{0\},0,0,\{0\}) \in \widehat{\tT}$
consisting of one (marked) point (the root).
Standard properties of Poisson 
measures show that knowing $\widehat{\cN}_{a,b,\infty} \left( (T_\infty(a)\cup T'_\infty(b)) \times 
\{ \widehat{\tau}\} \right)=1$ and denoting by $(X,\hat \tau)$ the corresponding mark of 
$\widehat{\cN}_{a,b,\infty}$, it holds that conditionally on $\widehat{\cT}_\infty([0,a])$ 
and $\widehat{\cT}'_\infty([0,b])$,
$X \in T_\infty(a)$ with probability
$k^a_\infty(T_\infty)/[ k^a_\infty (T_\infty) + {k'}^b_\infty(T'_\infty)]$.
(Here we use that for $\widehat{\cS}=\widehat{\tau}$, we have $e^{-a\mu_S(S)}=e^{-b\mu_S(S)}=1$). 
But $(X,\hat \tau)$ is a mark of $\widehat{\cN}_{a,b,\infty}$ if and only if 
$X \in A_\infty\cap T_\infty(a)$ or $X \in A'_\infty\cap T'_\infty(b)$. We have shown that
\begin{align*}
&\Pr[\#(A_\infty\cap T_\infty(a))=1 \vert \#((A_\infty\cap T_\infty(a))\cup(A'_\infty\cap T'_\infty(b) ))=1,
\cT_\infty]\\
=& \frac{k^a_\infty(T_\infty)}{k^a_\infty (T_\infty) + {k'}^b_\infty(T'_\infty)}.
\end{align*}

To conclude, we fix $h\in(a,b)$. As a consequence of Step 3c, given $\cT_\infty$ and 
$\{\cT_\infty = \cT_\infty'\}$,  the marked tree 
$\widehat{\cT}_\infty([0,h]) \cup \widehat{\cT}_\infty'([h,\infty])$ has the same distribution as 
$\widehat{\cT}_\infty= \widehat{\cT}_\infty([0,h]) \cup \widehat{\cT}_\infty([h,\infty])$. 
Hence, the conditional distribution  given  $\cT_\infty$ and $\{\cT_\infty = \cT_\infty'\}$
and $\{\#((A_\infty \cap T_\infty(a))\cup(A'_\infty \cap T'_\infty(b)))=1\} $ of the unique element of 
$(A_\infty \cap T_\infty(a)) \cup (A'_\infty \cap T'_\infty(b))$ is the same as the  conditional distribution  
given  $\cT_\infty$ and $\{\#((A_\infty \cap T_\infty(a))\cup((A_\infty \cap T_\infty(b)))=1\} $ of the unique 
element of $A_\infty \cap (T_\infty(a) \cup  T_\infty(b))$.
The claim follows. 

\vip

{\it Step 3e: Conclusion.} Using Steps 3c and 3d, one could prove that conditionally on 
$\cT_\infty$, the set $A_\infty$ is a Poisson point process on $L(T_\infty)$ with intensity proportional
to $\mu_\infty$. We will prove a slightly weaker result, which suffices to conclude.

Let us denote by $R_\infty(a)=\#\{A_\infty \cap T_\infty([0,a])\}$, which counts the number of marks below
the height $a$ of $T_\infty$. This process is clearly c\`adl\`ag, nondecreasing, and has only jumps of height
$1$ (it is clear from the pruning procedure that there will a.s. never be two marks at the same height).
Step 3c implies that conditionally on $\cT_\infty$, for all $0<a<b$, $R_\infty(b)-R_\infty(a)$ is
independent of $\sigma(\{R_\infty(h) : h \in [0,a]\})$. Hence $(R_\infty(a))_{a\geq 0}$ is a Poisson
process with some intensity measure $m_\infty(da)$ (this random measure being $\sigma(\cT_\infty)$-measurable).
Next, Step 3d tells us that for any $0\leq a < b$, $\Pr[\Delta R_\infty(a)=1 \vert \Delta R_\infty(a)+
\Delta R_\infty(b)=1, \cT_\infty]= k^{a}_\infty(T_\infty)/[k^{a}_\infty(T_\infty)+k^{b}_\infty(T_\infty)]$. We classically
deduce that $m_\infty(da)$ is of the form $\lambda k^a_\infty(T_\infty)da$, for some $\lambda >0$.
Consequently,
$$
\Pr(R_\infty(\infty)=0 \vert \cT_\infty)= \exp\left(- \lambda \int_0^\infty k^a_\infty(T_\infty)da  \right)
= \exp\left(- \lambda \mu_\infty(T_\infty)  \right),
$$
where we finally used Step 3b-(iii). Observe now that by definition, $Z_\infty=\mu_\infty(T_\infty)$,
that $\{Z=Z_\infty\}= \{\cT=\cT_\infty\}=\{A_\infty=\emptyset\}=\{R_\infty(\infty)=0\}$ and
that $Z_\infty$ is $\sigma(\cT_\infty)$-measurable. We conclude that
$$
\Pr(Z=Z_\infty \vert Z_\infty)= \E[\Pr(R_\infty(\infty)=0 \vert \cT_\infty )\vert Z_\infty]
= \exp\left(- \lambda Z_\infty  \right) 
$$
as desired.
\vip

{\it Step 4.} Writing $\E[e^{-rZ}\indiq_{\{Z=Z_\infty\}}]\in[0,\infty]$ in two different ways using Steps 2 and 3
and that $Z$ is $\sigma(\{(x,u_x), x \in \cI \})$-measurable, 
we deduce that for any 
$r\in (-\infty,\infty)$, $I(r)=J(r)$, where
\begin{align*}
I(r) :=& \E \left[\exp\left(-rZ+ \int_0^1\int_0^\infty\int_\tT \log \psiex( u^{3/2} / 16 ) N(dx,du,d\cT)\right) 
\right],\\
J(r) := & \E \left[\exp\left(-rZ_\infty- \lambda Z_\infty\right)\right].
\end{align*}
Using that
\begin{align*}
I(r) =& \E \left[\exp\left(-\int_0^1\int_0^\infty\int_\tT [ru-\log \psiex( u^{3/2} / 16 )] N(dx,du,d\cT)\right) 
\right]\\
= & \exp\left(-\int_0^\infty \left(1-e^{-ru+\log \psiex( u^{3/2} / 16 )}\right)\bn(du) \right)
\end{align*}
as well as Step 1, we deduce that for all $r$, 
$$
\int_0^\infty \left(1-e^{-(r+\lambda)u}\right)\bnel(du)=
\int_0^\infty \left(1-e^{-ru}\psiex( u^{3/2} / 16 )\right)\bn(du)
$$
and thus that for all $q$,
$$
\int_0^\infty \left(1-e^{-qu}\right)\bnel(du)=
\int_0^\infty \left(1-e^{-(q-\lambda)u}\psiex( u^{3/2} / 16 )\right)\bn(du).
$$
Choosing $q=0$, we see that $\int_0^\infty \left(1-e^{\lambda u}\psiex( u^{3/2} / 16 )\right)\bn(du)=0$.
This can clearly be satisfied only for one value of $\lambda$, and we infer
from Lemma \ref{tl1} that $8\lambda = |a_1'|$ (use the substitution $u=8z$). We have
shown that for any $q\geq 0$,
$$
\int_0^\infty \left(1-e^{-qu}\right)\bnel(du)=
\int_0^\infty \left(1-e^{-qu}e^{|a_1'|u/8}\psiex( u^{3/2} / 16 )\right)\bn(du).
$$
Differentiating this equality with respect to $q$ and using the injectivity of the Laplace
transform, we conclude that $u\bnel(du) = u e^{|a_1'|u/8}\psiex( u^{3/2} / 16 )\bn(du)$
as desired.
\end{proof}

The following observation is somewhat obvious in view of Lemma \ref{nel}.

\begin{lem}\label{buissonsdab}
Let $B=[0,\infty)$ stand for the backbone.
Let $R(dx,du,dy)$ be a Poisson measure on $B\times (0,\infty) \times (0,\infty)$ with intensity
measure $r(dx,du,dy):=2 dx \bnel (du) (u/16)e^{-uy/16}dy$, where $\bnel(du)$ was defined and computed in
Lemma \ref{nel}.
We activate all the marks of $R$ and 
explore $B$ from $\infty$ to $0$ with the following rule: each time we encounter a point $T=X-Y$,
where $(X,U,Y)$ is an active mark of $R$, we prune $B$ at $T$ and we deactivate all the marks
$(X',U',Y')$ of $R$ such that $X'>T$. We end with a finite segment $[0,\tau]\subset B$. 
Then 
$$
\Gamma:=\int_B \int_0^\infty\int_0^\infty u \indiq_{\{x<\tau\}}R(dx,du,dy)
$$
has the same law as $\nu_\infty(H_\infty)$, for some pruned  SSCRT $\cP_\infty(\cH)
=(H_\infty,\delta_\infty,\nu_\infty)$.
\end{lem}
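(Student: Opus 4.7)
The plan is to obtain $\Gamma$ as a reformulation of $\nu_\infty(H_\infty)$ that only records the data relevant to the total mass, namely the positions, post-pruning masses, and ``first useful backbone mark'' of each planted tree. The starting point is to decompose, conditionally on $\cH = (H, \delta, \nu)$, the Poisson measure $M$ governing $\cP_\infty(\cH)$ (Definition \ref{defpi}) into two independent parts: $M_{\rm tree}$, consisting of marks $(z, y)$ for which $y$ lies in the same planted tree as $z$, and $M_{\rm back}$, consisting of marks $(z, y)$ for which $y$ lies on the backbone $B$. These are independent Poisson measures because $M$ is Poisson and the corresponding regions are disjoint, and $M_{\rm tree}$, restricted to each planted tree, coincides with the measure driving $\cP_\infty$ applied internally to that tree.

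I would then perform the internal prunings first. Since a backbone cut at $T$ removes an entire planted tree sitting above it and does not affect any tree strictly below, doing all internal prunings before processing $M_{\rm back}$ yields, on the set of surviving trees, the same pruned trees as the combined top-down procedure. After this step, the random set of pairs (position, internally pruned mass) of the planted trees forms, by Lemma \ref{nel} and standard Poisson projection, a Poisson point process on $[0, \infty) \times (0, \infty)$ with intensity $2 \, dx \, \bnel(du)$. Conditionally on it, the marks of $M_{\rm back}$ emanating from a tree planted at $x$ with internally pruned mass $u$ form a Poisson process on $[0, x]$ of intensity $u/16$ per unit length (one gets this rate by integrating $(1/16)\mu_{x,\infty}(dz)\lambda_{[0,x]}(dy)$ over $z$ surviving internal pruning); only the highest of these marks can ever fire, since cutting at it deactivates every other mark sent by that tree. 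The distance $Y$ from $x$ to this highest mark is therefore $\expo(u/16)$-distributed, with $Y > x$ (equivalently $T = x - Y < 0$) meaning no usable mark. By Poisson thinning and independence across trees, the triples $(x, u, Y)$ form a Poisson measure with exactly the intensity $2 \, dx \, \bnel(du) \, (u/16) e^{-uy/16} \, dy$ prescribed for $R$.

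The top-down exploration of the backbone described in the lemma is then a verbatim reformulation of the backbone subroutine of $\cP_\infty$: at each step one locates the highest active mark on $B$, cuts there, and deactivates every mark sent from above the cut. The two procedures produce the same sequence of cut points, the same terminal $\tau$, and the same set of surviving planted trees (at positions $x \leq \tau$); summing their pruned masses yields both $\nu_\infty(H_\infty)$ and $\Gamma$. The main obstacle is justifying the decoupling in the previous paragraph, namely that $\cP_\infty$ commutes with performing all internal prunings before any backbone processing; this rests on the fact that a backbone cut either removes a given planted tree entirely or does not touch it at all, together with the observation that a mark of $M_{\rm back}$ emanating from an internally pruned-away point is automatically deactivated before any backbone cut can occur. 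Once this is granted, the well-definedness of the $R$-exploration is inherited from that of $\cP_\infty(\cH)$ established in Proposition \ref{piwd}.
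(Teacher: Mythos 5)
Your proof is correct and follows essentially the same route as the paper: decouple the internal (per-tree) prunings from the backbone pruning, invoke Lemma~\ref{nel} together with Poisson marking to get the intensity $2\,dx\,\bnel(du)$ for the post-pruning masses, observe that only the highest backbone mark sent by each surviving tree can fire and is at an $\expo(u/16)$ distance below the planting point, and recognize the resulting exploration as identical to the one described in the lemma. The only difference is cosmetic: you spell out the independence of $M_{\rm tree}$ and $M_{\rm back}$ and the commutation argument a bit more explicitly than the paper does.
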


\begin{proof} 
The main idea is just that we can prune first the planted trees and then the backbone.
Consider a SSCRT $\cH=(H,\delta,\nu)$ as in Definition \ref{mieux}, built
with a Poisson measure $N(dx,du,d\cT)$ with intensity measure $2 dx \bn(du)Q_u(d\cT)$, of which
we denote by $\{(x,u_x,\cT_x), x\in \cI\}$ the marks. 
Conditionally on $N$, prune $\cT_x$ independently 
for each $x\in \cI$, and denote by  $\cT_{x,\infty}=(T_{x,\infty},d_{x,\infty},\mu_{x,\infty})$ the resulting tree.
Consider the point measure $O=\sum_{x \in \cI} \delta_{(x,\mu_{x,\infty}(T_{x,\infty}))}$ on
$B\times (0,\infty)$. Due to Lemma \ref{nel}, $O$ is a Poisson measure with intensity
$2 dx \bnel(du)$. 

\vip

Then we prune the backbone as follows. For each $x\in\cI$, we consider a Poisson process $\pi_x$ 
on $[0,x) \subset B$ with rate $\mu^x_\infty(T^x_\infty)/16$. We activate all these Poisson processes.
Then we explore $B$ from $\infty$ to $0$ with the following rule: each time we encounter a mark
$T$ of an active Poisson process $\pi^x$, we prune the backbone at $T$ and we deactivate
all the Poisson processes $\pi^y$ with $y>T$. We end with a finite segment $[0,\tau]\subset B$.
Then $[0,\tau]$, on which are planted the (pruned) trees $\cT_{x,\infty}$ for $x \in [0,\tau]\cap \cI$,
is a clearly pruned SSCRT $\cP_\infty(\cH)=(H_\infty,\delta_\infty,\nu_\infty)$ and its total mass is nothing but 
$\nu_\infty(H_\infty)=\sum_{x \in \cI} \mu_{x,\infty}(T_{x,\infty})\indiq_{\{x < \tau\}}
=\int_B \int_0^\infty u \indiq_{\{x<\tau\}}O(dx,du)$.

\vip

Finally, we observe that for each $x\in \cI$, at most one mark of $\pi^x$
can be useful, and this mark is of the form $x-Y_x$, where $Y_x$ is exponentially distributed with
parameter $\mu_{x,\infty}(T_{x,\infty})/16$ (actually, $\pi^x$ has one (or more) mark if  $x-Y_x>0$ and 
no mark if $x-Y_x<0$). Finally, we find the pruning procedure of the statement, with the 
Poisson measure $R=\sum_{x\in\cI} \delta_{(x,\mu_{x,\infty}(T_{x,\infty}),Y_x)}$ which has indeed the intensity $r$
and for which $\nu_\infty(H_\infty)=\sum_{x \in \cI} \mu_{x,\infty}(T_{x,\infty})\indiq_{\{x < \tau\}}
=\int_B \int_0^\infty\int_0^\infty u \indiq_{\{x<\tau\}}R(dx,du,dy)$.
\end{proof}

We can finally give the

\begin{preuve} {\it of Theorem \ref{measleaves}.}
We consider the random variable $\Gamma$ built in Lemma \ref{buissonsdab}.
We have to show that $\Gamma/8 \sim xc(x)dx$. We introduce $p(q):=\E[e^{-q\Gamma/16}]$ for $q\geq 0$.
Recall from Theorem \ref{mr3} that
\begin{align*}
\ell(q)= \int_0^\infty (1-e^{-q x}) c(x)dx \quad \hbox{whence} \quad 
\ell'(q)= \int_0^\infty e^{-q x} x c(x)dx.
\end{align*}
By injectivity of the Laplace transform, we thus only have to check that for all $q\geq 0$,
$p(2q)=\ell'(q)$. To this end, we will use Lemma \ref{equfonc}. Since $p$
is clearly continuous, decreasing, satisfies $p(0)=1$ and $p(\infty)=0$ (because $\Gamma>0$ a.s.),
it suffices to prove that $p$ satisfies (\ref{ef}). We thus fix $q>0$ for the whole proof.

\vip

{\it Step 1.} Recall the construction handled in Lemma \ref{buissonsdab}. Denote by
$\{(x,u_x,y_x) : x \in \cI\}$ the marks of $R$.
We consider the event
$\Omega(q)$ on which all the marks $(X,U,Y)$ of $R$ such that $X-Y\in [-q,0]$ are finally deactivated
(at the end of the procedure).

\vip

{\it Step 2.} We prove that $\Pr(\Omega(q))=p(q)$. By construction, 
all the marks $(X,U,Y)$ of $R$ such that $X>\tau$ are deactivated (because $\tau$ prunes
the backbone) and all the marks $(X,U,Y)$ of $R$ such that $X<\tau$ are still active
and are known to satisfy $Y>X$ (because else, the backbone would have been pruned at $X-Y<\tau$).
Thus $\Pr[\Omega(q) \vert \{x,u_x\}_{x\in \cI},\tau]= 
\prod_{x\in \cI \cap [0,\tau]} \Pr[y_x>x+q \vert y_x> x,u_x]=\prod_{x\in \cI \cap [0,\tau)}e^{-qu_x/16}$,
because $y_x$ is $\cE xp(u_x/16)$-distributed (conditionally on $u_x$). We immediately conclude,
since $\sum_{x\in \cI \cap [0,\tau)} u_x= \Gamma$, that $\Pr[\Omega(q) \vert \{x,u_x\}_{x\in \cI},\tau]
=e^{-q \Gamma/16}$, whence $\Pr(\Omega(q))=\E[e^{-q \Gamma/16}]$ as desired.

\vip

{\it Step 3.} Next, we introduce
\begin{align*}
V :=& \inf \left\{x>0 : R(\{x\}\times (0,\infty)\times (x,x+q))>0\right\}, \\
W:=& \inf \left\{x>0 : R(\{x\}\times (0,\infty)\times (0,x))>0\right\},
\end{align*}
and we denote by $(V,U_V,Y_V)$ and $(W,U_W,Y_W)$ the corresponding marks.
We also consider the event $A$ on which all the marks $(X,U,Y)$ of $R$ such that $X>V$ and 
$X-Y \in [0,V]$ are finally
deactivated when applying the pruning procedure described in Lemma \ref{buissonsdab} only on
$(V,\infty)$ (i.e. exploring $B$ only from $\infty$ to $V+$). It holds that 
$$
(\Omega(q))^c = \{V<W\}\cap A.
$$
Indeed,
\vip

$\bullet$ On $\{V<W\}\cap A$, all the marks $(X,U,Y)$ with $X>V$ and $X-Y\in [0,V]$ 
are deactivated (thanks to $A$) and
there are no mark $(X,U,Y)$ with $0<X-Y<X<V$ (because $W>V$), so that the mark $(V,U_V,Y_V)$
will not be deactivated and it holds that $V-Y_V \in [-q,0]$ by construction, whence
$\Omega(q)$ cannot be realized.

\vip

$\bullet$ On $A^c$, there is a useful mark $(X,U,Y)$ such that $0<X-Y <V<X$, which thus makes
$(V,U_V,Y_V)$ inactive (as well as all the marks $(X',U',Y')$ with $X'>V$). Since by definition
of $V$, there is no active mark $(X,U,Y)$ with $X<V$ and $X-Y \in[-q,0]$, we deduce that $\Omega(q)$
is realized.

\vip

$\bullet$ On $\{V>W\}$, we have to consider two situations. Either the mark $(W,U_W,Y_W)$ is useful,
whence it prunes the backbone at $W-Y_W\in(0,V)$ and thus makes $(V,U_V,Y_V)$ inactive 
(and it also deactivates all the marks $(X',U',Y')$ with $X'>V$) and we conclude as
previously that $\Omega(q)$
is realized. Or the mark $(W,U_W,Y_W)$ is useless, but then it has necessarily been deactivated by
a useful mark $(X,U,Y)$ such that $0<X-Y<W$ which also deactivates $(V,U_V,Y_V)$,
since $V>W$. We conclude again as previously.

\vip

{\it Step 4.} By an argument of invariance by translation, we see that 
$\Pr[A \vert V]=p(V)$, recalling that $\Pr[\Omega_0(q)]=p(q)$ by Step 2.
Indeed, conditionally on $V=v$, $A$ is the same event as $\Omega_0(v)$ concerning the Poisson
measure $R^v=\sum_{x\in \cI\cap(v,\infty)}\delta_{(x-v,u_x,y_x)}$.
Next, the events $A$ and $\{V<W\}$ are independent conditionally on $V$.
Indeed, conditionally on $V=v$, the event $A$ concerns only the marks $\{(x,u_x,y_x),x\in\cI\cap(v,\infty)\}$
while $\{W<v\}$ concerns only the marks $\{(x,u_x,y_x),x\in\cI\cap(0,v)\}$.
Using Steps 2 and 3, we deduce that 
$$
1-p(q)=\E(p(V)\Pr[V<W\vert V]).
$$

{\it Step 5.} Define $\psi(q)=\int_0^\infty (1-e^{-qu/16})\bnel(du)$. Here we show that 
\begin{align}\label{presque}
\E(p(V)\Pr[V<W\vert V])= 2\int_0^\infty p(v) \left[\psi(v+q)-\psi(v)\right]e^{-2\int_q^{q+v} \psi(x)dx} dv.
\end{align}
First observe that for $v>0$, $\{V>v\}=\{R(I_v)=0\}$ and $\{W>w\}=\{R(J_w)=0\}$, where
$I_v=\{(x,u,y): x \in [0,v], u>0, y \in [x,x+q]\}$ and 
$J_w=\{(x,u,y): x \in [0,w], u>0, y \in [0,x]\}$. Since $I_v \cap J_w = \emptyset$ for any values
of $v>0,w>0$, we deduce that $V$ and $W$ are independent. 
Recall that the intensity measure of $R$ is $r(dx,du,dy):=2 dx \bnel (du) (u/16)e^{-uy/16}dy$. 
First, $\Pr(V>v)=e^{-r(I_v)}$, with
\begin{align*}
r(I_v)=&2 \int_0^v dx \int_0^\infty \bnel(du) \int_x^{x+q} dy \frac{u}{16}e^{-u y/16}\\
=& 2 \int_0^v dx \int_0^\infty \bnel(du) [e^{-u x/16}-1+1  -e^{-u (x+q)/16} ]\\
=&  2 \int_0^v dx \left[\psi(x+q)-\psi(x)\right].
\end{align*}
Consequently, the density of the law of $V$ is 
$$
h_V(v)=2\left[\psi(v+q)-\psi(v)\right] e^{- 2\int_q^{q+v} \psi(x)dx+2\int_0^v \psi(x)dx}.
$$
Next, we have $\Pr(W>w)=e^{-r(J_w)}$ with
\begin{align*}
r(J_w)=&2 \int_0^w dx \int_0^\infty \bnel(du) \int_0^x dy \frac{u}{16}e^{-u y/16}=  2 \int_0^w dx  \psi(x).
\end{align*}
By independence, we deduce that $\Pr(W>V\vert V)=\exp\left(- 2 \int_0^V dx  \psi(x) \right)$.
Formula (\ref{presque}) immediately follows.

\vip

{\it Step 6.} We finally deduce that $p$ satisfies (\ref{ef}).
First, we observe from Lemma \ref{nel} and Theorem \ref{mr3} that
$$
\bnel(du) = \frac{|a_1'|}{16} c(u/8)du.
$$
This implies, recalling that $\ell(q)=\int_0^\infty (1-e^{-qu})c(u)du$, that
$\psi(q)=|a_1'|\ell(q/2)/2$.  We thus find, using Steps 4 and 5, that
$$
1-p(q)=\int_0^\infty p(v) |a_1'| \left[ \ell((v+q)/2) - \ell(v/2)\right] e^{-|a_1'|\int_q^{q+v} \ell(x/2)dx} dv.
$$
To conclude, it only remains to observe from (\ref{lala}) that
\begin{align*}
e^{-|a_1'|\int_q^{q+v} \ell(x/2)dx}=&\exp\left(\int_q^{q+v} \frac{\Ai'(x/2+a_1')}{\Ai(x/2+a_1')}dx\right)
= \frac{\Ai^2((q+v)/2+a_1')}{\Ai^2(q/2+a_1')}.
\end{align*}
This ends the proof.
\end{preuve}

It remains to prove the technical lemmas. We start with the

\begin{preuve} {\it of Lemma \ref{tl1}.}
Let us call $I$ the integral in the LHS. Write $I=K+L$, where
\begin{align*}
K=\int_0^\infty  \left(1 - e^{|a_1'|z}\right)\psi_{ex}(\sqrt 2z)\frac{dz}{2\sqrt \pi z^{3/2}},\quad
L=\int_0^\infty \left(1 - \psi_{ex}(\sqrt 2 z) \right) \frac{dz}{2\sqrt \pi z^{3/2}}.
\end{align*}
First, using (\ref{for}) (that can be extended to all values of $q\in (a_1,\infty)$
by analyticity) with $q=a_1'$ and that $\Ai'(a_1')=0$, we get $K=\Ai'(0)/\Ai(0)$. 
Next, we write $L=\lim_{q\to\infty} 
(L_q^1-L^2_q)$,
where 
\begin{align*}
L^1_q=\int_0^\infty  \left(1 - e^{-qz}\right)\frac{dz}{2\sqrt \pi z^{3/2}},\quad
L^2_q=\int_0^\infty \left(1 - e^{-qz}\right)\psi_{ex}(\sqrt 2 z) \frac{dz}{2\sqrt \pi z^{3/2}}.
\end{align*}
Using (\ref{reutil}), we get $L^1_q=\sqrt q$ and (\ref{for}) implies
$L^2_q= \Ai'(0)/\Ai(0) - \Ai'(q)/\Ai(q)$.
Consequently, $I= \lim_{q\to\infty} (\sqrt q +\Ai'(q)/\Ai(q))$. But 
$\Ai'(q)/\Ai(q) = - \sqrt q + O(1/q)$ as $q\to \infty$, see e.g. Janson \cite[Section 16]{j},
so that $I=0$ as desired.
\end{preuve}

We finally handle the

\begin{preuve} {\it of Lemma \ref{equfonc}.}
We first prove that there is at most one function $p$ satisfying the assumptions
of the statement. To do so, we first observe that, since $\ell$ is increasing by definition, see
(\ref{lala}), $h(q,s)>0$ as
soon as $q>0$. Next, a simple computation shows that for all $q\in(0,\infty)$,
\begin{align*}
\kappa(q):=& \int_0^\infty |h(q,s)|ds \\
<& \int_0^\infty |a_1'|\ell((q+s)/2)\frac{\Ai^2((q+s)/2+a_1')}{\Ai^2(q/2+a_1')} ds\\
=& \frac{-1}{\Ai^2(q/2+a_1')} \int_0^\infty \Ai'((q+s)/2+a_1')\Ai((q+s)/2+a_1') ds =1.
\end{align*}
Consider now two functions $p_1,p_2$ satisfying the assumptions of the statement, set $u=p_1-p_2$. It
holds that $||u||_\infty=u(q_0)$ for some $q_0\in (0,\infty)$, because
$u(0)=0$, $\lim_\infty u=0$ and because $u$ is continuous.
Observe next that
$$
||u||_\infty=|u(q_0)|=\left|\int_0^\infty u(s)h(q_0,s)ds \right| \leq \kappa(q_0)||u||_\infty,
$$
whence $||u||_\infty=0$ as desired since $\kappa(q_0)<1$.

\vip

We now prove that $p(q):=\ell'(q/2)$ satisfies all the requirements. First, by Theorem \ref{mr3}, there
holds $p(q)=\int_0^\infty e^{-qx/2}xc(x)dx$, where $xc(x)dx$ is a probability measure on $(0,\infty)$, so
that $p$ is clearly decreasing, continuous and satisfies  $p(0)=1$ and $\lim_\infty p=0$.
To prove that (\ref{ef}), it suffices
to check that $H(q)=I(q)+J(q)$, where $H(q):=\Ai^2(q/2+a_1')(1-\ell'(q/2))$, where
\begin{align*}
I(q):=& |a_1'|\int_0^\infty \ell'(s/2) \ell((q+s)/2)\Ai^2((q+s)/2+a_1') ds \\
=& - \int_0^\infty \ell'(s/2) \Ai((q+s)/2+a_1')\Ai'((q+s)/2+a_1') ds,
\end{align*}
(here we used the explicit expression (\ref{lala}) of $\ell$) and where
\begin{align*}
J(q):=& -|a_1'|\int_0^\infty \ell'(s/2) \ell(s/2)\Ai^2((q+s)/2+a_1') ds \\
=& |a_1'| \int_0^\infty \ell^2(s/2) \Ai((q+s)/2+a_1')\Ai'((q+s)/2+a_1') ds
\end{align*}
(here we used an integration by parts and that $\ell(0)=0$). Recalling (\ref{equadiff}), we deduce that
$|a_1'| \ell^2(s/2)-\ell'(s/2) = s/(2|a_1'|)-1$, whence $I(q)+J(q)=K(q)+L(q)$, where
\begin{align*}
K(q):=& - \int_0^\infty \Ai((q+s)/2+a_1')\Ai'((q+s)/2+a_1') ds=\Ai^2(q/2+a_1')
\end{align*}
and where
\begin{align*}
L(q):=& \frac 1 {2|a_1'|} \int_0^\infty s \Ai((q+s)/2+a_1')\Ai'((q+s)/2+a_1') ds \\ 
=& -\frac 1 {2|a_1'|} \int_0^\infty \Ai^2((q+s)/2+a_1') ds \\
=& - \frac 1{2|a_1'|}\int_q^\infty \Ai^2(s/2+a_1') ds.
\end{align*}
Gathering everything, we realize that it only remains to check that $L(q)=M(q)$, where (here we use
(\ref{equadiff}) and then the explicit expression (\ref{lala}) of $\ell$)
\begin{align*}
M(q):=&-\ell'(q/2)\Ai^2(q/2+a_1')\\
=&-[1-q/(2|a_1'|)+|a_1'|\ell^2(q/2)]\Ai^2(q/2+a_1')\\
=&-\Ai^2(q/2+a_1')+q\Ai^2(q/2+a_1')/(2|a_1'|)-(\Ai'(q/2+a_1'))^2/|a_1'|.
\end{align*}
Since $\lim_\infty M=\lim_\infty L=0$, it suffices to check that $M'(q)=L'(q)$, 
i.e. that $2|a_1'| M'(q)= \Ai^2(q/2+a_1')$. But, recalling that $\Ai''(x)=x\Ai(x)$,
\begin{align*}
2|a_1'| M'(q) :=&(-2|a_1'|+q)\Ai(q/2+a_1')\Ai'(q/2+a_1')+\Ai^2(q/2+a_1')\\
&-2\Ai'(q/2+a_1')\Ai''(q/2+a_1')\\
=&(q+2a_1')\Ai(q/2+a_1')\Ai'(q/2+a_1')+\Ai^2(q/2+a_1')\\
&-2(q/2+a_1')\Ai(q/2+a_1')\Ai'(q/2+a_1'),
\end{align*}
which equals $\Ai^2(q/2+a_1')$ as desired.
\end{preuve}

\subsection{The contour process point of view}\label{ss7}

We now give an alternative presentation of the pruned SSCRT. It might seem simpler 
at first glance, but when transposing the whole section to the formalism below, 
the proofs become rather less transparent.

\begin{rem}\label{cpov}
Consider a two-sided Bessel$(3)$-process $(C_t)_{t\in \rr}$ as in Lemma \ref{concon}.
Consider a Poisson measure $O(dt,dy)$ on the half plane $\rr\times [0,\infty)$ 
with intensity measure $(1/16)dsdy$.
\vip

Start with $(D_t)_{t\in \rr}=(C_t)_{t\in \rr}$ and with $\Pi$ the set of all the marks $(T,Y)$ 
of $O$ that fall under the graph of $(C_t)_{t\in \rr}$ (i.e. for which $Y < C_T$).

\vip

Travel from $y=\infty$ to $y=0$ (that is, consider the horizontal lines with ordinate $y$ for
$y$ going from $\infty$ to $0$) with the following rule:
each time we encounter a remaining mark $(T,Y)\in \Pi$, we update $(D_t)_{t\in (L,R)}$ and $\Pi$ 
as follows.

\vip

(i) Consider the largest (for inclusion) subset $\tcl s,t \tcr\subset \rr$ containing $T$ (this notation was
introduced in Lemma \ref{cont2}) such that $\tcl s,t \tcr \times \{Y\}$ is under the graph of $(D_t)_{t\in \rr}$
(i.e. such that for all $u \in \tcl s,t \tcr$, $C_u\geq Y$).

\vip

(ii) Remove the zone $\tcl s,t \tcr \times [0,\infty)$ from the picture (in particular, 
this makes disappear all the marks $(T',Y')\in \Pi$ with $T'\in \tcl s,t\tcr$) and glue if necessary:

(a) if $\tcl s,t \tcr=[s,t]$ for some $0\leq s \leq t$, remove everything in the zone 
and then translate the picture on the right of $t$ by $t-s$ to the left;

(b)  if $\tcl s,t \tcr=[s,t]$ for some $s \leq t \leq 0$, remove everything in the zone 
and then translate the picture on the left of $s$ by $t-s$ to the right;

(c)  if $\tcl s,t \tcr=(-\infty,s]\cup[t,\infty)$ for some $s<0< t$, remove everything in the zone.

\vip

We end with a compactly supported function $(D_t)_{t \in [L,R]}$, with $L<0<R$, satisfying 
$D_L=D_R$. We finally build the continuous function $(C^\infty_t)_{t\in [0,R-L]}$ by setting
$C^\infty_t=D_t$ for $t \in [0,R]$ and $C^\infty_t=D_{L-R+t}$ for $t\in[R,R-L]$. It holds that
$C^\infty_0=C^\infty_{R-L}=0$.
Build the tree $\cT_{C^\infty}$ as in Lemma \ref{cont}. Then $\cT_{C^\infty}$ is a pruned SSCRT
$\cP_\infty(\cH)=(H_\infty,\delta_\infty,\mu_\infty)$. In particular, 
$\Gamma:=R-L$ equals $\nu_\infty(H_\infty)$, so that $\Gamma/8$ is $xc(x)dx$-distributed.
See Figure \ref{fig4} for an illustration.
\end{rem}

\begin{figure}[hb]
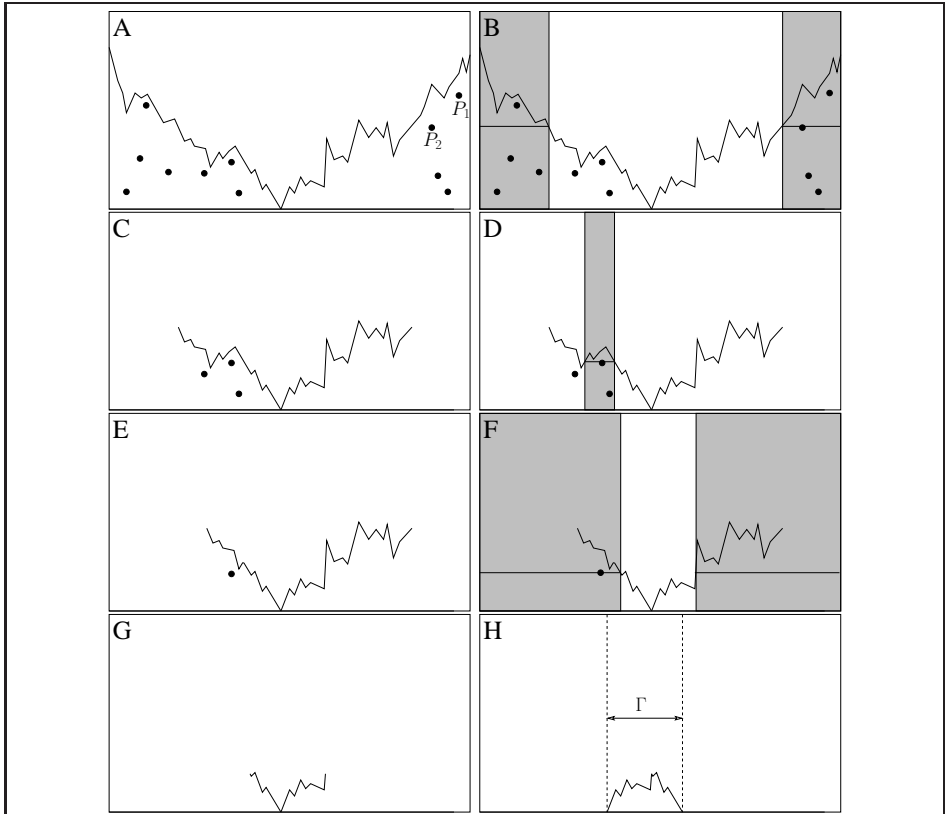

\fbox{
\begin{minipage}[c]{0.95\textwidth}
\centering
\resizebox{0.4\linewidth}{!}{\input{PruneCont1.pstex_t}}
\resizebox{0.4\linewidth}{!}{\input{PruneCont2.pstex_t}}\\ 
\resizebox{0.4\linewidth}{!}{\input{PruneCont3.pstex_t}}
\resizebox{0.4\linewidth}{!}{\input{PruneCont4.pstex_t}}\\ 
\resizebox{0.4\linewidth}{!}{\input{PruneCont5.pstex_t}} 
\resizebox{0.4\linewidth}{!}{\input{PruneCont6.pstex_t}}\\ 
\resizebox{0.4\linewidth}{!}{\input{PruneCont7.pstex_t}} 
\resizebox{0.4\linewidth}{!}{\input{PruneCont8.pstex_t}} 
\caption{Pruning the SSCRT: the contour point of view}
\label{fig4}
\vip
\parbox{12cm}{
\footnotesize{
On Figure A, the two-sided Bessel$(3)$-process $(C_t)_{t\in \rr}$ is drawn, as well as the marks 
of the Poisson measure $O(dt,dy)$ with intensity $(1/16)dtdy$ (we have erased all the marks above
the graph of $C$).
We assume that for $t$ outside the picture (both on the left and right), 
$C_t$ never goes under $C_{T_1}$, where $P_1=(T_1,Y_1)$. This allows
us to claim that the mark $P_2$ is useful (this is obviously the case if $P_1$ is useful,
and this is also the case if $P_1$ is useless, because then it has been erased, as well as all the marks
outside the picture.
Thus $P_2$ is useful. On Figure B, we thus build the corresponding 
zone, which we erase in Figure C. In Figure D, we 
consider the highest remaining mark and the corresponding zone,
which we erase in Figure E. The last mark is treated in Figures F and G.
Since there are no more marks, we have finished. In Figure H, we exchange the left and 
right parts of remaining function.
Build the tree with this contour function as in Lemma \ref{cont}. What we get is 
a pruned SSCRT $\cP_\infty(\cH)=(H_\infty,\delta_\infty,\mu_\infty)$. In particular, $\Gamma=\nu_\infty(H_\infty)$, 
so that $\Gamma/8$ is $xc(x)dx$-distributed.
}}
\end{minipage}
}
\end{figure}

Let us explain briefly why this builds a pruned SSCRT.
Recall from Lemma \ref{concon} that the SSCRT $\cH=(H,\delta,\nu)$ can be built 
from $(C_t)_{t\in\rr}$ as in Lemma \ref{cont2}: consider
the pseudo-distance $\tilde d_{C}$ on $\rr$, the equivalence relation $\sim^C$
on $\rr$, the quotient space $T_{C}=\rr/\sim^C$, the canonical
projection $\pi_{C}:\rr\mapsto T_{C}$, the distance $d_{C}$ induced by $\tilde d_{C}$ 
on $T_{C}$, the measure $\mu_{C}= \lambda \circ \pi_{C}^{-1}$ on $T_{C}$ (where $\lambda$ is the Lebesgue
measure on $\rr$), the root $\rho=\pi_{C}(0)$. Define
$J_t= \inf_{[t,\infty)}C_u$ if $t\geq 0$ and $J_t= \inf_{(-\infty,t]}C_u$ if $t\leq 0$.
Set $B=\{\pi_{C}(t) : t \in \rr, C_t=J_t\}$. Finally,
set $\cH=(H,\delta,\nu)$ with $H=T_C$, $\delta=d_C$, $\nu=\mu_C$.

\vip

We introduce $A=\{(t,y) : t \in \rr, 0<y<C_t\}$.
For $t>0$ and $y\in(0,C_t)$, we introduce $s=\sup\{u\in(0,t] : C_u=y\}$ and set $\gamma_t(y)=\pi_C(s)$.
For $t<0$ and $y\in(0,C_t)$, we consider $s=\inf\{u\in [t,0) : C_u=y\}$ and set $\gamma_t(y)=\pi_C(s)$.
Observe that for all $(t,y) \in A$, $\gamma_t(y)\in H$
is the point of the branch $B_H(\pi_C(t))$ with $\delta(\rho,\pi_C(s))=y$.

\vip

Next, we build a Poisson measure $M(dz,dx)$ on $\{(z,x) : z \in H, x \in B_H(z)\}$ with intensity
measure $(1/16)\nu(dz) \lambda_{B_H(z)}(dx)$, where $B_H(z)$ is the branch joining $\rho$
to $z$, endowed with the Lebesgue measure. 
Denote by $\{(T_i,Y_i) \}_{i \geq 1}$ the marks of $O$ in $A$.
For each $i\geq 1$, let $Z_i=\pi_{C}(T_i)$ and $X_i=\gamma_{T_i}(Y_i)$.
Then $M:=\sum_{i\geq 1} \delta_{(Z_i,X_i)}$ is a Poisson measure on 
$\{(z,x) : z \in H, x \in B_H(z)\}$ with the correct intensity measure.
Indeed, $M$ is the image measure of $O$ (restricted to $A$) by the map
$f(t,x)=(\pi_C(t),\gamma_t(x))$. Hence conditionally on $\cH$, $M$ is a Poisson measure on
$\{(z,x) : z \in H, x \in B_H(z)\}$ with intensity measure $(1/16) \lambda_A \circ f^{-1}$,
$\lambda_A$ being the Lebesgue measure on $A$. Using that $\nu=\lambda_{(-\infty,\infty)}\circ \pi_C^{-1}$
by definition, one easily concludes that $M$ is as desired.

\vip

Consider a mark $(T,Y)$ of $O$, and the associated mark $(Z,X)=(\pi_C(T),\gamma_T(Y))$ of $M$.
Assume that at some time in the procedure, the mark produces a pruning.

\vip

(a) From the trees point of view (see $\cP_\infty$), we remove the whole descendance of $X$
and deactivate (forever) the marks $(Z',X')$ of $M$ with $Z'$ descendant of $X$.

\vip

(b) From the contour processes point of view (see Remark \ref{cpov}), we consider the largest 
subset $\tcl s,t \tcr \subset \rr$ such that  $\tcl s,t \tcr\times \{Y\} \subset A$ and
remove the whole zone $\tcl s,t \tcr\times (0,\infty)$ from the picture.

\vip

One easily understands that (b) is exactly the same thing as (a), because
the subtree of $H$ consisting of all the descendance of $X$ is nothing but 
$\{\pi_C(u) : u \in \tcl s,t \tcr\}$ and the marks $(Z',X')$ of $M$ with $Z'$ descendant of $X$
correspond to the marks $(T',Y')$ of $O$ such that $T'\in \tcl s,t \tcr$.

\vip

At the end, we get a function $(D_t)_{t \in [L,R]}$. Roughly, $(D_t)_{t\in[0,R]}$ is the ``upper contour''
of our pruned SSCRT, and $(D_t)_{t\in[L,0]}$ is the ``lower contour'' of our pruned SSCRT.
We need to identify the backbones. Exchanging the left and right parts of $(D_t)_{t \in [L,R]}$
to get the function $(C^\infty_t)_{t \in [0,R-L]}$ and using  $(C^\infty_t)_{t \in [0,R-L]}$ as a contour
function is a way to do it.

\subsection{A noticeable diffusion process}\label{ss8}

We conclude the paper with a remark about a diffusion process related
to pruned CRTs. This is related to Lemma \ref{nel}. Consider a Brownian motion $(B_t)_{t\geq 0}$.
Then exactly as in Lemma \ref{concon}, 
$(|B_t|)_{t\geq 0}$ can be seen as the contour function of a family of CRT$(u)$, 
planted at $x\in[0,\infty)$ with a Poisson measure $M(dx,du,d\cT)$ with intensity
measure $dx (2\pi u^3)^{-1/2}du Q_u(d\cT)$ (here there is no backbone: the CRTs are disconnected).
Prune each of these CRTs independently using $\cP_\infty$. From the contour point
of view (as in Subsection \ref{ss7}), we can handle the following procedure: consider a
Poisson measure $O(dt,dx)$ on $\rr\times[0,\infty)$ with intensity measure
$dtdx/16$. Treat independently each excursion of $(|B_t|)_{t\geq 0}$: start from the top of the excursion,
etc...
We obtain a new continuous nonnegative path  $(Y_t)_{t\geq 0}$, which is the contour function
of an infinite forest.

\vip

We believe that we can write $Y_t=|X_t|$, where $X_t$ solves the equation $X_t=dW_t - |a_1'|\ell(X_t/2)dt /2.$
The main idea is that when at height $x$, the process $X_t$ is as a Brownian motion, 
but the pruning procedure pushes it down, and we found the drift coefficient by handling
formal computations. We do not want to prove such a result, because the paper
is long enough, but we believe that the following proposition is quite noticeable.


\begin{prop}
Fix $\alpha>0$ and consider the odd function $\beta:\rr\mapsto \rr$
defined by 
$$
\beta(x)=- {\rm sign}(x) \alpha \frac{\Ai'(\alpha |x| + a_1')}
{\Ai(\alpha |x|+ a_1')}.
$$ 
Let $(B_t)_{t\geq 0}$ be a standard Brownian motion, let 
$(X_t)_{t\geq 0}$ be the unique solution to
$$
X_t= B_t - \int_0^t \beta(X_s)ds
$$
and let $(\tau_x)_{x\geq 0}$
be the inverse of the local time $(L_t)_{t\geq 0}$ at $0$ of $(X_t)_{t\geq 0}$. 
Then it holds that 
$\E[e^{-\lambda \tau_x}]=\exp(-x \psi(\lambda))$, where
\begin{align}\label{edla}
\psi(\lambda)=\beta(2\lambda /\alpha^3)
=\int_0^\infty (1-e^{-\lambda u}) \frac{\exp(\alpha^2|a_1'|u/2)}
{\sqrt{2\pi u^3}} \psiex(\alpha^3u^{3/2}/2)du,
\end{align}
where $\psiex$ was defined in (\ref{psiex}).
In particular if $\alpha=2^{1/3}$ then we have the noticeable property 
that the drift coefficient $\beta$ coincides with the Laplace exponent $\psi$ 
of the inverse local time.
\end{prop}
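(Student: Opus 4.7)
The plan is to identify the Laplace exponent $\psi$ of the subordinator $(\tau_x)_{x\geq 0}$ via the hitting time of $0$. By the strong Markov property applied at the stopping times $\tau_x$, $(\tau_x)$ is a subordinator, so $\E[e^{-\lambda\tau_x}]=\exp(-x\psi(\lambda))$ for some Bernstein function $\psi$. Setting $u_\lambda(x):=\E_x[e^{-\lambda H_0}]$ for $x\geq 0$, with $H_0=\inf\{t>0:X_t=0\}$, Feynman--Kac tells us that $u_\lambda$ is the unique solution on $(0,\infty)$ of $\tfrac12 u_\lambda''-\beta u_\lambda'=\lambda u_\lambda$ with $u_\lambda(0)=1$ and $u_\lambda(x)\to 0$ as $x\to\infty$ (the latter because $X$ is recurrent at $0$, so $H_0<\infty$ almost surely, with $H_0\to\infty$ as the starting point recedes).

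To solve this ODE I substitute $u_\lambda(x)=v_\lambda(x)/g(x)$ with $g(x):=\Ai(\alpha x+a_1')$, noting that by construction $\beta(x)=-g'(x)/g(x)$ on $(0,\infty)$ and that $g''(x)=\alpha^2(\alpha x+a_1')g(x)$. A direct computation then cancels the first-order term in $v_\lambda$ and reduces the equation to $v_\lambda''=\alpha^2(\alpha x+a_1'+2\lambda/\alpha^2)v_\lambda$. Via the change of variable $y=\alpha x+a_1'+2\lambda/\alpha^2$ this becomes the Airy equation $w''(y)=yw(y)$; taking the decaying solution at $+\infty$ and normalizing so that $u_\lambda(0)=1$ gives
\begin{equation*}
u_\lambda(x)=\frac{\Ai(\alpha x+a_1'+2\lambda/\alpha^2)\,\Ai(a_1')}{\Ai(\alpha x+a_1')\,\Ai(a_1'+2\lambda/\alpha^2)}.
\end{equation*}

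Next I invoke the classical excursion-theoretic identity $\psi(\lambda)=-u_\lambda'(0+)$, valid for a recurrent diffusion on $\rr$ that is symmetric around $0$ (which holds here since $\beta$ is odd, so $X\stackrel{\mathrm{law}}{=}-X$) equipped with its semimartingale local time. Differentiating the explicit formula at $0$ and crucially using that $\Ai'(a_1')=0$,
\begin{equation*}
u_\lambda'(0+)=\alpha\,\frac{\Ai'(a_1'+2\lambda/\alpha^2)}{\Ai(a_1'+2\lambda/\alpha^2)}-\alpha\,\frac{\Ai'(a_1')}{\Ai(a_1')}=\alpha\,\frac{\Ai'(a_1'+2\lambda/\alpha^2)}{\Ai(a_1'+2\lambda/\alpha^2)},
\end{equation*}
so that $\psi(\lambda)=-\alpha\Ai'(a_1'+2\lambda/\alpha^2)/\Ai(a_1'+2\lambda/\alpha^2)=\beta(2\lambda/\alpha^3)$, which is the first identity in~(\ref{edla}); for the special value $\alpha=2^{1/3}$ we get $2/\alpha^3=1$, giving the noticeable coincidence $\psi\equiv\beta$.

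For the integral representation, I rewrite $\psi(\lambda)=\alpha|a_1'|\ell(2\lambda/\alpha^2)$ using (\ref{lala}) in the form $-\Ai'(q+a_1')/\Ai(q+a_1')=|a_1'|\ell(q)$, and then use Theorem~\ref{mr3} to express $\ell(q)=\int_0^\infty(1-e^{-qx})c(x)dx$ with $c(x)=\exp(|a_1'|x)\psiex(\sqrt 2 x^{3/2})/(2\sqrt\pi|a_1'|x^{3/2})$. The change of variable $x=\alpha^2 u/2$ (under which $qx=\lambda u$, $\sqrt 2 x^{3/2}=\alpha^3 u^{3/2}/2$, $|a_1'|x=|a_1'|\alpha^2 u/2$) then yields, after the straightforward simplification $\sqrt 2/(2\sqrt\pi)=1/\sqrt{2\pi}$, the announced integral formula. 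The main technical obstacle is the third step: justifying $\psi(\lambda)=-u_\lambda'(0+)$ with the correct proportionality constant requires a careful excursion-theoretic argument exploiting the oddness of $\beta$ and the specific normalization of the semimartingale local time $L_t$ (each of positive and negative excursions contributing half of $-u_\lambda'(0+)$ to $\psi$).
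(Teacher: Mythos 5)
Your proof is correct and takes essentially the same route as the paper: both solve the ODE $\tfrac12 u''-\beta u'=\lambda u$ explicitly via the Airy equation (you through the substitution $u=v/\Ai(\alpha\cdot+a_1')$, the paper by directly verifying the quotient $\varphi_\lambda$), then apply the identity $\psi(\lambda)=-u_\lambda'(0+)/u_\lambda(0)$ using oddness of $\beta$ to collapse the two excursion sides, and finally translate to the integral representation via (\ref{lala}) and the formula for $c$. For the identity relating $\psi$ to $-u_\lambda'(0+)$ that you flag as the "main technical obstacle," the paper simply cites Pitman--Yor \cite[Theorem 1]{py} together with the symmetry remark $\psi^{0,+}=\psi^{0,-}$, which is exactly the justification you are gesturing at.
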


What is surprising in this result is that the drift coefficient and the Laplace exponent
of the inverse local time are two objects of completely different nature.

\begin{proof}
First observe that $\beta$ is $C^1$ on $\rr$ and has at most linear growth
(because $-\Ai'(x)/\Ai(x) \sim \sqrt x$ as $\to +\infty$, see e.g. Janson \cite[Section 16]{j}), 
so that the S.D.E. has a unique solution. Next, the second equality in (\ref{edla}) 
is immediately deduced from (\ref{lala}).

\vip

Using the result of Pitman-Yor \cite[Theorem 1]{py},
the Laplace exponent $\psi$ of the inverse local time $(\tau_x)_{x\geq 0}$ 
of $(X_t)_{t\geq 0}$ is given, for $\lambda \geq 0$, by
\begin{align}
\psi(\lambda)=\frac{-\phi_\lambda'(0)}{\phi_\lambda(0)},
\end{align}
where $\phi_\lambda$ is a decreasing nonnegative solution to
\begin{align}\label{edpy}
\frac12 \phi_\lambda''(x)=\beta(x)\phi'_\lambda(x)+\lambda\phi_\lambda(x).
\end{align}
(With the notation of \cite[Theorem 1]{py}, it holds that $\psi^{0,+}=\psi^{0,-}$
because $\beta$ is odd:  
for $\phi_\lambda(x)$ a decreasing nonnegative solution to
(\ref{edpy}), $\phi_\lambda(-x)$ is an increasing  nonnegative solution to
(\ref{edpy})). 

\vip

Define $a:=\alpha^{-2}\Ai(2\lambda\alpha^{-2}+a_1')/\Ai(a_1')$ 
and $b:=\alpha^{-1}\Ai'(2\lambda\alpha^{-2}+a_1')/\Ai(a_1')$.
We claim that the solution $\phi_\lambda$ to (\ref{edpy}) satisfying 
$\phi_\lambda(0)=a$ and $\phi_\lambda'(0)=b$ 
is nonnegative and decreasing. This will prove that $\psi(\lambda)
=\beta(2\lambda/\alpha^3)$ and end the proof.

\vip

First, this solution is obviously decreasing 
(and thus nonnegative) on $\rr_-$, 
because $\phi_\lambda(0)=a>0$ and $\phi_\lambda'(0)=b<0$,
whence $\phi'_\lambda(x)<0$ for all $x<0$. Indeed, assume by contradiction
that $x_0=\sup\{x<0 : \phi'_\lambda(x)=0\}$ exists. Then $\phi'_\lambda<0$ 
on $(x_0,0)$,
so that $\phi_\lambda>\phi_\lambda(0)>0$ on $(x_0,0]$, whence 
$\phi''_\lambda=2 \beta \phi'_\lambda + \lambda \phi_\lambda>0$
on $(x_0,0)$ (because $\beta<0$ on $\rr_-$), so that 
$\phi'_\lambda(x_0)<\phi'_\lambda(0)<0$,
whence a contradiction. 

\vip

We next observe that for $x\geq 0$, $\phi_\lambda(x)
=\varphi_\lambda(x)$, where $\varphi_\lambda(x):= \alpha^{-2}\Ai
(\alpha x + 2\lambda\alpha^{-2}+a_1')/\Ai(\alpha x + a_1')$.
Indeed, it obviously holds that $\varphi_\lambda(0)=a$ and 
$\varphi_\lambda'(0)=b$ (recall that $\Ai'(a_1')=0$) and some
tedious computations, using only that $\Ai''(y)=y\Ai(y)$, show that 
$\varphi_\lambda$ solves (\ref{edpy}) on $\rr_+$: check successively
that 

\vip

(i) $\varphi_\lambda'(x)=\alpha^{-1}\Ai'
(\alpha x + 2\lambda\alpha^{-2}+a_1')/\Ai(\alpha x + a_1') + \varphi_\lambda(x)
\beta(x)$, 

\vip

(ii) $\varphi_\lambda''(x)=2\lambda \varphi_\lambda(x)
+ 2 \varphi_\lambda'(x) \beta(x)+ \varphi_\lambda(x)[\beta'(x)+
\alpha^2(\alpha x + a_1')-\beta^2(x)]$, 

\vip

(iii) $\beta'(x)=-\alpha^2(\alpha x + a_1')+\beta^2(x)$.

\vip

It only remains to check that $\varphi_\lambda$
is decreasing on $\rr_+$ (it is clearly nonnegative). But we see from (i) above
that 
$\varphi'_\lambda(x)=\varphi_\lambda(x)[\beta(x)-\beta(x+2\lambda \alpha^{-3})]<0$,
since the map $x \mapsto \beta(x)$ is increasing on $\rr_+$ (use e.g.
the second equality in (\ref{edla})).
\end{proof}

\def\refname{References}

\end{document}